\crefname{subsection}{Subsection}{Subsections}
\crefname{framework}{Framework}{Frameworks}
\newtheorem{theorem}{Theorem}[section]
\newtheorem{lemma}[theorem]{Lemma}
\newtheorem{assumption}{Assumption}[section]
\newtheorem{condition}{Condition}[section]
\newtheorem{corollary}[theorem]{Corollary}
\newtheorem{remark}[theorem]{Remark}
\numberwithin{algorithm}{section}
\let\c@framework\c@algorithm
\newcommand{\papertitle}{Retrospective Approximation Sequential Quadratic Programming for Stochastic Optimization with General Deterministic Nonlinear Constraints}
\newcommand{\paperauthora}{Albert S. Berahas}
\newcommand{\paperauthoraaffiliation}{Department of Industrial and Operations Engineering, University of Michigan}
\newcommand{\paperauthorb}{Raghu Bollapragada}
\newcommand{\paperauthorbaffiliation}{Operations Research and Industrial Engineering Program, University of Texas at Austin}
\newcommand{\paperauthorc}{Shagun Gupta}
\begin{document}
\title{\papertitle}

\author{\paperauthora\footnotemark[1]
    \and \paperauthorb\footnotemark[2]\ \footnotemark[3]
    \and \paperauthorc\footnotemark[2]}

\maketitle

\renewcommand{\thefootnote}{\fnsymbol{footnote}}
\footnotetext[1]{\paperauthoraaffiliation. (\url{\paperauthoraemail})}
\footnotetext[2]{\paperauthorbaffiliation. (\url{\paperauthorbemail},\url{\paperauthorcemail})}
\footnotetext[3]{Corresponding author.}
\renewcommand{\thefootnote}{\arabic{footnote}}

\begin{abstract}
    In this paper, we propose a framework based on the Retrospective Approximation (\texttt{RA}) paradigm to solve optimization problems with a stochastic objective function and general nonlinear deterministic constraints. This framework sequentially constructs increasingly accurate approximations of the true problems which are solved to a specified accuracy via a deterministic solver, thereby decoupling the uncertainty from the optimization. Such frameworks retain the advantages of deterministic optimization methods, such as fast convergence, while achieving the optimal performance of stochastic methods without the need to redesign algorithmic components. For problems with general nonlinear equality constraints, we present a framework that can employ any deterministic solver and analyze its theoretical work complexity. We then present an instance of the framework that employs a deterministic Sequential Quadratic Programming (\texttt{SQP}) method and that achieves optimal complexity in terms of gradient evaluations and linear system solves for this class of problems. For problems with general nonlinear constraints, we present an \texttt{RA}-based algorithm that employs an \texttt{SQP} method with robust subproblems. Finally, we demonstrate the empirical performance of the proposed framework on multi-class logistic regression problems and benchmark instances from the CUTEst test set, comparing its results to established methods from the literature.
\end{abstract}
\section{Introduction} \label{sec:Introduction}

In this paper, we propose a framework for solving optimization problems with a stochastic objective function and general nonlinear deterministic constraints of the form,
\begin{equation} \label{eq:intro_problem}
    \min_{x \in \Rmbb^n} \; f(x) \quad \text{s.t.} \quad c_E(x) = 0, \quad c_I(x) \leq 0,
\end{equation}
where $f : \Rmbb^n \rightarrow \Rmbb$, $c_E : \Rmbb^n \rightarrow \Rmbb^{m_E}$ and $c_I : \Rmbb^n \rightarrow \Rmbb^{m_I}$ are continuously differentiable functions. We consider two forms for the objective function,
\newcounter{eqnA}
\newcounter{eqnB}
\refstepcounter{equation} 
\setcounter{eqnA}{\value{equation}} 
\label{eq:intro_deter_error_obj}
\refstepcounter{equation} 
\setcounter{eqnB}{\value{equation}} 
\label{eq:intro_stoch_error_obj}
\begin{align*}
  f(x) = \frac{1}{|\Scal|} \sum_{\xi \in \Scal} F(x, \xi) \qquad \qquad \text{(\ref{eq:intro_deter_error_obj})}, 
  \qquad \qquad \text{and} \qquad \qquad
   f(x) = \Embb [F(x, \xi)], \qquad \qquad \text{(\ref{eq:intro_stoch_error_obj})}
\end{align*}
where the objective function \eqref{eq:intro_deter_error_obj} results in a deterministic finite-sum problem over a finite dataset $\Scal = \{\xi_1, \xi_2, \dots\}$ with $F : \Rmbb^n \times \Scal \rightarrow \Rmbb$, and the objective function \eqref{eq:intro_stoch_error_obj} results in an expectation minimization problem over the random variable $\xi$ with the associated probability space $(\Xi, \Omega, \mathcal{P})$, $F : \Rmbb^n \times \Xi \rightarrow \Rmbb$ and $\Embb[\cdot]$ denotes the expectation with respect to $\Pcal$. Such problems arise in various applications including machine learning \cite{le2022survey,mehrabi2021survey}, computational physics for climate modeling \cite{zhao2019physics, kotary2021end}, portfolio optimization \cite{perold1984large}, multistage optimization \cite{shapiro2021lectures}, and optimal power flow \cite{summers2015stochastic}.

Methods for solving \eqref{eq:intro_problem} with a deterministic objective function have been extensively studied; see e.g., \cite{nocedal2006numerical, bertsekas2009convex}.
A key challenge in solving problem \eqref{eq:intro_problem} is balancing the two possibly competing goals of minimizing the objective function and satisfying the constraints at each step of the method.
Recently, stochastic methods have been developed for minimizing \eqref{eq:intro_deter_error_obj} and \eqref{eq:intro_stoch_error_obj}, although many of these methods are limited to problems with only equality constraints.
These include penalty-based techniques \cite{ravi2019explicitly,nandwani2019primal,roy2018geometry,chen2018constraint,wang2017penalty}, projection methods \cite{lan2012optimal,lan2020first,ghadimi2016mini}, Sequential Quadratic Programming (\texttt{SQP}) \cite{berahas2021sequential,berahas2022adaptive,na2023adaptive,curtis2021inexact,curtis2023sequential,qiu2023sequential,curtis2024worst,o2024two, fang2024fully,na2023inequality}, and Interior Point methods \cite{curtis2023stochastic,curtis2024single,curtis2025interior}.
These methods retain the search direction computation mechanism of the deterministic approaches by using an approximation of the objective function gradient. However, these approaches redesign several components of the deterministic algorithms to make them suitable for stochastic settings and, as a result, lose some of the associated benefits such as automatic step size selection, and result in slower convergence.

The Retrospective Approximation (\texttt{RA}) framework is an alternate approach for solving stochastic optimization problems \cite{pasupathy2011introspective,chen1994retrospective}. The framework iteratively constructs deterministic subsampled approximations of increasing accuracy and solves each one to a certain specified accuracy. As a result, the approach decouples the uncertainty from the optimization process, enabling the use of deterministic methods to solve stochastic problems. This framework has been successfully employed in various areas, including stochastic root finding \cite{pasupathy2010choosing,chen2001stochastic}, two-stage stochastic optimization \cite{pasupathy2021adaptive}, simulation optimization \cite{jalilzadeh2016eg,deng2009variable}, and optimal design problems \cite{vondrak2009adaptive,royset2006optimal,phelps2016optimal,polak2008efficient}. More recently, it has also been proposed for unconstrained stochastic optimization \cite{newton2023retrospective,newtonretrospective}.
As compared to deterministic methods, these methods are more efficient due to their use of less expensive subsampled function (and gradient) approximations in the initial stages, and as compared to fully stochastic methods, these methods achieve the optimal iteration and work complexity, can attain high accuracy solutions and can take advantage of developments in deterministic methods.

In this work, we propose frameworks based on the \texttt{RA} paradigm to solve stochastic optimization problems with general deterministic nonlinear constraints, and employ the \texttt{SQP} method, a state-of-the-art technique for optimization problems with general nonlinear constraints \cite{nocedal2006numerical}, as our deterministic optimization method. We propose: $(1)$ a framework for problems with only equality constraints that can incorporate any deterministic method, and an instance that employs a line search \texttt{SQP} method \cite{powell2006fast,han1977globally}; and, $(2)$ an \texttt{RA} based algorithm for general nonlinear constraints that utilizes the robust-\texttt{SQP} method \cite{burke1989robust}.
We analyze the theoretical convergence of the two frameworks and evaluate their empirical performance on regularized logistic regression multi-class classification problems as well as the S2MPJ CUTEst problem set \cite{gratton2024s2mpj}, comparing them with established methods from the literature.

\subsection{Constrained Stochastic Optimization} \label{sec:stoch_constrained}

In this section, we present the fundamentals of constrained optimization for smooth problems, review the literature on constrained stochastic optimization, introduce the \texttt{SQP} method, and outline our contributions relative to existing work.
We define the Lagrangian for \eqref{eq:intro_problem} with $\lambda_E \in \Rmbb^{m_E}$ and $\lambda_I \in \Rmbb^{m_I}$ as $\Lcal(x, \lambda_E, \lambda_I) = f(x) + c_E(x)^T \lambda_E + c_I(x)^T \lambda_I$.
The Karush-Kuhn-Tucker (KKT) conditions state that $x^*$ is a first-order stationary point if there exist $\lambda^*_E$ and $\lambda^*_I$ such that,
\begin{equation} \label{eq:KKT_conditions}
    \nabla_x \Lcal (x^*, \lambda^*_E, \lambda^*_I) = 0, \quad
    c_E(x^*) = 0, \quad c_I(x^*) \leq 0, \quad c_I(x^*) \odot \lambda^*_I = 0, \quad \lambda^*_I \geq 0,
\end{equation}
where $c_I(x^*) \odot \lambda^*_I$ denotes the element-wise product of the two vectors.

While many powerful methods for obtaining a (or an approximate) KKT point in the deterministic optimization setting have existed for decades, see e.g., \cite[Chapters 12-19]{nocedal2006numerical}, counterparts for constrained problems with stochastic objective functions are significantly fewer.
Penalty-based methods \cite{ravi2019explicitly,nandwani2019primal,roy2018geometry,chen2018constraint,wang2017penalty} were among the first methods extended to this problem class due to their intuitive design and ease of implementation. However, the empirical performance of penalty-based methods is sensitive to the choice of penalty function and parameter.
Projection methods \cite{lan2012optimal,lan2020first,ghadimi2016mini} have also been proposed for this problem class for instances where the projection mapping onto the feasible region is easily computable. Moreover, stochastic interior points methods \cite{curtis2023stochastic,curtis2024single,curtis2025interior,dezfulian2024convergence} have also been recently developed. While these methods have sound theoretical results and show promising empirical performance, they are primarily applicable to bound constraints and require strong assumptions to handle the general nonlinear constraint setting. Finally, \texttt{SQP} methods have been proposed and are discussed in detail below.

\texttt{SQP} methods \cite[Chapter 18]{nocedal2006numerical} are iterative methods that update the current estimate of the solution by computing a search direction that minimizes a quadratic model of the objective function subject to a linear approximation of the constraints, i.e., at each iterate $x$,
\begin{equation} \label{eq:SQP_quad_model}
    \begin{aligned}
        \min_{d \in \Rmbb^n} & \; \tfrac{1}{2} d^T H d + \nabla   f(x)^T d   \\
        \text{s.t.} \,\, & \;c_E(x) + \nabla c_E(x)^T d = 0 \\
        & \; c_I(x) + \nabla c_I(x)^T d \leq  0,  
    \end{aligned}
\end{equation}
where $d \in \Rmbb^{n}$ is the search direction and $H \in \Rmbb^{n \times n}$ is an approximation of the Hessian of the Lagrangian.
Several methods have been developed for constrained stochastic optimization settings that incorporate an approximation of the gradient of the objective function in \eqref{eq:SQP_quad_model}.
Early methods were primarily designed for problems with only equality constraints \cite{berahas2021sequential,o2024two,berahas2025sequential,curtis2024worst}. Building upon these methods, subsequent works in the equality-constrained setting leveraged features from deterministic \texttt{SQP} methods such as inexact solutions to \eqref{eq:SQP_quad_model} \cite{curtis2021inexact,berahas2022adaptive}, adaptive test to control the gradient approximation accuracy \cite{berahas2022adaptive,na2023adaptive}, and line search and trust-region variants \cite{fang2024fully,berahas2025sequential}. All aforementioned methods use, modify and adapt mechanisms from the deterministic \texttt{SQP} methodology. That said, to overcome the challenges that arise due to the stochastic nature of \eqref{eq:intro_problem}, most advanced features of deterministic \texttt{SQP} methods such as quasi-Newton Hessian approximations, inexact search direction solutions, and line search mechanisms are omitted or used only in solitude, or strong assumptions on the behavior of the algorithm or problem are imposed. By incorporating a deterministic \texttt{SQP} solver within the \texttt{RA} framework, one can use all the advanced \texttt{SQP} features in harmony when solving constrained stochastic optimization problems. 

When solving equality-constrained stochastic problems using \texttt{SQP}, computing a search direction via \eqref{eq:SQP_quad_model} reduces to solving a linear system of equations. As a result, the two main computational costs of \texttt{SQP} methods in this setting are the number of gradient evaluations and the number of linear systems to be solved. Previously proposed methods such as \cite{berahas2021sequential,o2024two,berahas2025sequential,curtis2024worst} achieve the optimal complexity for gradient evaluations $\Ocal(\epsilon^{-4})$ but exhibit high complexity in terms of the number of linear system solves $\Ocal(\epsilon^{-4})$. In contrast, the method in \cite{berahas2022adaptive} achieves the optimal complexity in terms of the number of linear system solves $\Ocal(\epsilon^{-2})$ but incurs a higher complexity in terms of gradient evaluations $\Ocal(\epsilon^{-2(1 + \nu)})$, where $\nu > 1$. Our proposed \texttt{RA}-based \texttt{SQP} method for stochastic optimization with deterministic equality constraints achieves optimal complexity in terms of both costs.

The general equality and inequality constrained setting presents additional challenges in the design of \texttt{SQP} methods primarily, due to the difficulty in solving the \texttt{SQP} subproblem \eqref{eq:SQP_quad_model} while ensuring feasibility. Few \texttt{SQP}-based methods have been developed to handle both constraint types with a stochastic objective function. These include an active-set method \cite{na2023inequality}, a decomposition approach \cite{curtis2023sequential}, and a two-step method \cite{qiu2023sequential}. The integration of deterministic \texttt{SQP} methods within the \texttt{RA} framework allows for the incorporation of general nonlinear constraints. However, the choice of method and other aspects of \texttt{RA} must be carefully considered to ensure strong empirical performance in the presence of inequality constraints, as will be discussed in Section \ref{sec:Inequality_constrained}.

\subsection{Retrospective Approximation (\texttt{RA})} \label{sec:Retrospective_Approximation}

In this section, we describe the \texttt{RA} framework and our approach for applying it to constrained stochastic optimization.
The framework consists of nested outer and inner loops. 
In each outer iteration, a deterministic subsampled problem is constructed as a sample average approximation of the objective function in \eqref{eq:intro_problem},
\begin{align*}
    &\min_{x \in \Rmbb^n} F_{S}(x) \quad s.t. \quad c_E(x) = 0, \quad c_I(x) \leq 0, \label{eq:subsampled_problem} \numberthis \\
    \text{where} \quad & F_{S}(x) = \frac{1}{|S|} \sum_{\xi \in S} F(x, \xi) \quad \text{and} \quad g_{S}(x) = \frac{1}{|S|} \sum_{\xi \in S} \nabla F(x, \xi),
\end{align*}
$S \subseteq \Scal$ for \eqref{eq:intro_deter_error_obj} and $S$ is a set of independent and identically distributed (i.i.d.) samples drawn from the probability space $(\Xi, \Omega, \mathcal{P})$ for \eqref{eq:intro_stoch_error_obj}.
The deterministic subsampled problem \eqref{eq:subsampled_problem} is then solved to a specified accuracy in the inner loop.
The \texttt{RA} approach assumes greater control over the samples compared to other works in the constrained stochastic optimization literature that are in the fully stochastic regime and use only noisy gradient approximations \cite{curtis2023stochastic, berahas2021sequential, o2024two}. The approach requires objective function (and gradient) evaluations using the same sample set across iterates and the ability to control the sample size (accuracy), which is feasible in most practical applications discussed earlier. Once the inner loop terminates, a new batch of samples is obtained, and the formulated subsampled problem is warm-started at the output of the previous outer iteration. The framework is detailed in \cref{alg:RA_base} and the following remark.

\begin{framework}[H]
    \caption{Retrospective Approximation for Constrained Stochastic Optimization}
    \textbf{Inputs}: Initial iterate $x_{0,0}$, batch size sequence $\{|S_k|\}$, termination test sequence $\{\Tcal_k\}$. 
    \begin{algorithmic}[1]
        \For{$k = 0, 1, 2, \dots$}
            \State Construct subsampled problem \eqref{eq:subsampled_problem} from sample set $S_k$ \label{RA_base:subsample}
            \For{$j = 0, 1, 2, \dots$}
                \State \textbf{if} $\Tcal_k$ is satisfied; $N_k = j$, \textbf{break} \label{RA_base:termination}
                \State Update $x_{k, j + 1}$ \label{RA_base:update}
            \EndFor
            \State Set $x_{k+1, 0} = x_{k, N_k}$ \label{RA_base:last_iterate}
        \EndFor
    \end{algorithmic}
    \label{alg:RA_base}
\end{framework}

\begin{remark}
    We make the following remarks about \cref{alg:RA_base}.
    \begin{enumerate}
        \item \textbf{Indexing}: For quantities with double indexing, e.g., $x_{k, j}$, the first index represents the outer iteration and the second denotes the inner iteration. Quantities with a single index correspond solely to an outer iteration.
        \item \textbf{Outer Loop}: One obtains a sample set $S_k$ of pre-specified batch size and formulates the deterministic subsampled problem in Line \ref{RA_base:subsample}, independent of the current iterate $x_{k, 0}$. The subsampled problem is solved using a deterministic solver initialized at $x_{k, 0}$ in the inner loop.
        \item \textbf{Inner Loop (Lines \ref{RA_base:termination}-\ref{RA_base:update})}: The inner loop terminates when the specified accuracy for the subsampled problem is achieved, as tested in Line \ref{RA_base:termination} using the termination criterion $\Tcal_k$ for the outer iteration $k$. If the criterion is not met, a new iterate $x_{k, j+1}$ is obtained in Line \ref{RA_base:update}. Since the goal is to solve a deterministic subsampled problem to a prescribed accuracy, one can employ deterministic optimization methods to update the iterate within the inner loop.
    \end{enumerate}
\end{remark}

\cref{alg:RA_base} requires the specification of three key components: $(1)$ the batch size sequence $\{|S_k|\}$, $(2)$ the termination test $\{\Tcal_k\}$, and $(3)$ the update mechanism in the inner loop. The sequence $\{|S_k|\}$ is an increasing sequence. In previous work \cite{newtonretrospective,curtis2023sequential,royset2006optimal,newton2023retrospective}, this sequence has been predefined. In this work, an adaptive strategy tailored to constrained optimization settings is employed to determine the batch size at the start of each outer iteration.
In earlier works, the termination test $\Tcal_k$ either used a predefined sequence or adaptively determined $N_k$, the number of inner loop iterations, at the start of each outer iteration \cite{vondrak2009adaptive,royset2006optimal,phelps2016optimal,polak2008efficient}. Our goal is to adaptively set the required solution accuracy for each subsampled problem, similar to \cite{newtonretrospective,newton2023retrospective}.
The main challenge lies in determining how to measure solution accuracy. For unconstrained smooth optimization, the subsampled gradient norm is the natural choice, as done in \cite{newtonretrospective,newton2023retrospective}.
However, for constrained optimization, this estimation is more complex, with various options depending on the type of constraints (equality or general) and the update method in the inner loop.
In this work, we examine these options across different scenarios. Due to the adaptive nature of the batch size and termination criterion, we are also able to provide detailed convergence rates and complexity results for the number of gradient evaluations and inner loop iterations, unlike previous works \cite{vondrak2009adaptive, royset2006optimal, phelps2016optimal}.
Lastly, we use deterministic \texttt{SQP} methods in the inner loop to update the iterate, unlike earlier work that employed projection-based methods to implicitly ensure feasibility \cite{vondrak2009adaptive,royset2006optimal,phelps2016optimal,polak2008efficient}, and solve problems with general nonlinear constraints. 

\subsection{Contributions} \label{sec:Contributions}

We propose frameworks for solving optimization problems with a stochastic objective function and deterministic general nonlinear constraints \eqref{eq:intro_problem} based on \texttt{RA}. Our contributions can be summarized as follows.
\begin{enumerate}
    \item We propose a framework for solving stochastic optimization problems with general nonlinear equality constraints that can incorporate any deterministic solver (see \cref{alg:Equality_Constrained_RA}). Within this framework, we introduce adaptive conditions for selecting the batch size sequence and the termination test sequence, which lead to fast linear convergence of the outer iterates (\cref{th:EQ_outer_iter_complexity}). Furthermore, assuming that the deterministic solver exhibits sublinear convergence, we establish complexity guarantees for the finite-sum objective \eqref{eq:intro_deter_error_obj} that match those of deterministic methods applied to the full objective (\cref{th:EQ_work_complexity}). For the expectation-based problem \eqref{eq:intro_stoch_error_obj}, the framework achieves optimal complexity with respect to the number of gradient evaluations, $\Ocal(\epsilon^{-4})$, and the number of inner loop iterations, $\Ocal(\epsilon^{-2})$ (\cref{th:EQ_work_complexity}).
    \item We propose a variant of the framework for equality constraints that uses the deterministic \texttt{SQP} method (see \cref{alg:Equality_Constrained_RA_SQP}) in the inner loop for problems with equality constraints. This variant introduces alternate customized 
    conditions for the batch size sequence and termination criteria that leverage information from the \texttt{SQP} method to enhance empirical performance.
    The algorithm achieves optimal theoretical complexity results for number of gradient evaluations, $\Ocal(\epsilon^{-4})$, and number of \texttt{SQP} linear system solves, $\Ocal(\epsilon^{-2})$ (\cref{th:EQ_SQP_well_posed}), and also allows for the addition of advanced techniques from deterministic methods to enhance empirical performance in stochastic settings. These include employing inexact \texttt{SQP} subproblem solutions and the use of quasi-Newton Hessian approximations.
    \item We propose an \texttt{RA} based algorithm for stochastic problems with general (equality and inequality) nonlinear constraints (see \cref{alg:Inequality_Constrained_RA_SQP}), an area with relatively limited literature \cite{na2023inequality, curtis2023sequential, qiu2023sequential}. Our proposed algorithm employs deterministic robust-\texttt{SQP} subproblems \cite{burke1989robust} within the inner loop, and we present two specific instances of such subproblems. Both lead to similarly fast linear convergence in the outer iterations (\cref{th:outer_complexity_ineq}), but differ in practical performance due to variations in computational cost and subproblem accuracy.
    \item We illustrate the empirical performance of the proposed methods (\cref{alg:Equality_Constrained_RA_SQP} and \cref{alg:Inequality_Constrained_RA_SQP}) on regularized logistic regression multi-class classification problems and the S2MPJ CUTEst problem set \cite{gratton2024s2mpj}. We show the performance benefits of utilizing deterministic solvers and the use of techniques such as quasi-Newton Hessian approximations and inexact subproblem solves in harmony with the stochasticity. We also compare with algorithms from the literature.
\end{enumerate}

\subsection{Notation} \label{sec:Notation}
Let $\Rmbb$ denote the set of real numbers, $\Rmbb^n$ denote the set of $n$ dimensional real vectors, and $\Rmbb^{n \times m}$ denote the set of $n$ by $m$ real matrices. Unless specified otherwise, $\|\cdot\|$ represents the Euclidean norm of a vector and the Frobenius norm of a matrix, $|\cdot|$ denotes the absolute value of a real number and the cardinality of a set, and $[\cdot]_+$ denotes the positive elements of a vector. The ceiling function is denoted as $\lceil \cdot\rceil$. The element wise product of $a, b \in \Rmbb^n$ is denoted as $a \odot b$. The vector of all ones of dimension $n$ is denoted as $e_n$, and $I_n$ denotes the identity matrix of dimension $n$. Expectation and variance with respect to the distribution $\Pcal$ are denoted as $\Embb[ \cdot ]$ and $\Var( \cdot )$, respectively.

\subsection{Paper Organization} \label{sec:Paper_Organization}
The paper is organized as follows. In \cref{sec:Equality_constrained}, we formalize the framework for equality constrained problems. We describe the framework for general deterministic solvers in \cref{sec:General_Deterministic_Solver_alg} and provide theoretical convergence and complexity results in \cref{sec:General_Deterministic_Solver_theory}.
We then describe the \texttt{SQP} variant of the framework for equality constrained problems in \cref{sec:SQP_based_algorithm_eq} and the corresponding theoretical analysis in \cref{sec:SQP_based_algorithm_eq_theory}. In \cref{sec:Inequality_constrained}, we present an \texttt{RA} based method for problems with general nonlinear constraints using the robust-\texttt{SQP} method, describe the algorithm in \cref{sec:Inequality_constrained_algorithm}, and present the theoretical analysis in \cref{sec:Inquality_constrained_analysis}.
In \cref{sec:Numerical_experiments}, we illustrate the empirical performance of the proposed algorithms on multi-class classification logistic regression and the S2MPJ CUTEst problem set, and compare with methods from the literature.
Finally, we conclude with final remarks in \cref{sec:Conclusion}.
\section{Equality Constrained Problems} \label{sec:Equality_constrained}

In this section, we consider problem \eqref{eq:intro_problem} with only equality constraints, i.e.,
\begin{equation}
    \min_{x \in \Rmbb^n} f(x) \quad s.t. \quad c(x) = 0, \label{eq:equality_prob}
\end{equation}
where the constraint function $c_E$ is denoted by $c : \Rmbb^n \rightarrow \Rmbb^m$. 
We describe the proposed \texttt{RA} based framework in \cref{sec:General_Deterministic_Solver_alg} for general deterministic solvers and provide the corresponding convergence and complexity results in \cref{sec:General_Deterministic_Solver_theory}. We then focus on using \texttt{SQP} methods \cite{byrd2008inexact,berahas2021sequential,berahas2022adaptive} in the inner loop in \cref{sec:SQP_based_algorithm_eq} with the corresponding theoretical analysis in \cref{sec:SQP_based_algorithm_eq_theory}.

We introduce some additional notation for this section. The Lagrangian of problem \eqref{eq:equality_prob} is $\Lcal(x, \lambda) = f(x) + \lambda^T c(x)$ where $\lambda \in \Rmbb^m$ is the vector of dual variables. The
KKT conditions \eqref{eq:KKT_conditions} in the equality constraint setting reduce to
\begin{equation*}
    T(x, \lambda) = \begin{bmatrix}
        \nabla_x \Lcal(x, \lambda) \\ c(x)
    \end{bmatrix} = \begin{bmatrix}
        \nabla f(x) + \nabla c(x) \lambda \\ c(x)
    \end{bmatrix} = 0,
\end{equation*}
where $x^*$ is a first-order stationary point if there exists  $
\lambda^* \in \Rmbb^m$ such that $T(x^*, \lambda^*) = 0$. The corresponding quantities for subsampled problem \eqref{eq:subsampled_problem} with sample set $S$ are denoted as
\begin{equation*}
    \Lcal_{S}(x, \lambda) = F_{S}(x) + \lambda^T c(x)
    \quad \text{and} \quad
    T_{S}(x, \lambda) = \begin{bmatrix}
        \nabla_x \Lcal_{S}(x, \lambda) \\ c(x)
    \end{bmatrix}.
\end{equation*}
We denote the iterate (primal variable) and the dual variable at outer iteration $k$ and inner iteration $j$ as $x_{k, j}$ and $\lambda_{k,j}$, respectively. For convenience, we use the shorthand notation $c_{k, j} = c(x_{k, j})$ and $J_{k, j} = \nabla c(x_{k, j})^T$.

\subsection{Retrospective Approximation General Framework} \label{sec:General_Deterministic_Solver_alg}
In this section, we describe the proposed \texttt{RA} based framework with nested inner and outer loops (\cref{alg:RA_base}) to solve stochastic problems with deterministic equality constraints. In outer iteration $k$, we obtain a batch of samples $S_k$, independent of the current iterate, and define the subsampled problem \eqref{eq:subsampled_problem}. The subsampled problem is then solved up to a specified accuracy in the inner loop. For now, we assume that the batch size sequence is pre-specified and later introduce an adaptive strategy to select the batch size. 

The primal iterate in the inner loop is initialized as the output of the previous outer iteration, i.e., $x_{k+1, 0} = x_{k, N_k}$ where $N_k$ is the number of inner iterations at outer iteration $k$.
However, the dual variable iterate is initialized as
\begin{equation} \label{eq:dual_update_eq}
    \lambda_{k, 0} = 
    \begin{cases}
        \lambda_{k-1, N_{k-1}} & \text{if} \quad \texttt{Carry-Over,} \\
        -(J_{k, 0} J_{k, 0}^T)^{-1} J_{k, 0} g_{S_{k}}(x_{k, 0}) & \text{if} \quad \texttt{Reinitialize}. 
    \end{cases}
\end{equation}
The \texttt{Carry-Over} option reuses the dual variable from the previous outer iteration, requiring no additional computation. 
In contrast, the \texttt{Reinitialize} option solves the least squares problem $\lambda_{k, 0} = \argmin_{\lambda \in \Rmbb^m}\|T_{S_{k}}(x_{k, 0}, \lambda)\|^2$, which minimizes the KKT error for the subsampled problem at $x_{k, 0}$ to obtain a dual variable estimate for the inner loop.
The two options represent extreme choices for initialization. 
While the \texttt{Reinitialize} option provides a better dual variable for the subsampled problem, improving the initialization for the inner loop, it also incurs additional computational cost. Therefore, the choice between the two should be made based on the specific problem requirements. 
Alternatively, an inexact solution to the least squares problem is acceptable if it performs at least as well as the \texttt{Carry-Over} option, i.e., $\|T_{S_{k}}(x_{k, 0}, \lambda_{k, 0})\| \leq \|T_{S_{k}}(x_{k, 0}, \lambda_{k-1, N_{k-1}})\|$. 

In outer iteration $k$, the inner loop terminates when the subsampled problem is solved to a specified accuracy, as determined by termination criterion $\{\Tcal_k\}$ in \cref{alg:RA_base} (Line \ref{RA_base:termination}). For the case of equality constraints, it is defined as 
\begin{align} \label{eq:termination_criterion_eq}
    \Tcal_k : \|T_{S_k}(x_{k, j}, \lambda_{k, j}) \| \leq \gamma_k \|T_{S_k}(x_{k, 0}, \lambda_{k, 0})\| + \epsilon_k,
\end{align}
where $\gamma_k \in [0, 1)$ and $\epsilon_k \geq 0$ are user-defined parameters. This criterion measures solution accuracy in terms of the KKT error, and ensures a reduction in the KKT error of the subsampled problem relative to its initial value while allowing for an additional tolerance $\epsilon_k$. 
When the initial KKT error is high, the relative nature of \eqref{eq:termination_criterion_eq} prevents large deviations from the current solution.
A high initial KKT error may indicate that the current iterate and dual variable are either poor solutions to the true problem, or they are good solutions but the chosen sample set is dominated by outliers. 
In either case, one would like to proceed with caution as in the former, subsequent outer iterations should reaffirm the scenario, leading to a better solution, while in the latter, one would avoid biasing the solution to outlier samples.
Conversely, the term $\epsilon_k$ prevents unnecessary precision when the initial KKT error is low, which can occur when the solutions to successive subsampled problems are close to each other. 
In such cases, solving the subsampled problem to high accuracy may bias the iterates towards the solution of the subsampled problem rather than the solution of the true problem.

Any deterministic solver that updates the iterate and dual variable to satisfy $\Tcal_k$ within a finite number of iterations can be used in the inner loop.
The complete framework is presented in \cref{alg:Equality_Constrained_RA} and the remark that follows.

\begin{framework}[H]
    \textbf{Inputs:} Sequences $\{|S_k|\}$, $\{\gamma_k\} < 1$ and $\{\epsilon_k\} \geq 0$; initial iterate $x_{0,0}$ and dual variable $\lambda_{-1, 0}$.
    \caption{Equality Constrained Retrospective Approximation}
    \begin{algorithmic}[1]
        \For{$k = 0, 1, 2, \dots$}
            \State Construct subsampled problem \eqref{eq:subsampled_problem} from sample set $S_k$ and set $\lambda_{k, 0}$ via \eqref{eq:dual_update_eq} \label{EQ_general:subsample}
            \For{$j = 0, 1, 2, \dots$}
                \State \textbf{if} \eqref{eq:termination_criterion_eq} is satisfied; $N_k = j$, \textbf{break} \label{EQ_general:termination}
                \State Update $x_{k, j + 1}$ and $\lambda_{k, j + 1}$ \label{EQ_general:update}
            \EndFor
            \State Set $x_{k+1, 0} = x_{k, N_k}$
        \EndFor
    \end{algorithmic}
    \label{alg:Equality_Constrained_RA}
\end{framework}

\begin{remark}
    We make the following remarks about \cref{alg:Equality_Constrained_RA}, building upon \cref{alg:RA_base}.
    \begin{enumerate}
        \item \textbf{Outer Loop (Line \ref{EQ_general:subsample}):}  We obtain a sample set $
        S_k$ and use it to form the subsampled problem \eqref{eq:subsampled_problem}. The batch size is a user-specified sequence and an adaptive strategy is described later. Next, we initialize the dual variable according to \eqref{eq:dual_update_eq}. The update can follow either \texttt{Carry-Over} or \texttt{Reinitialize} options, or any dual variable estimate that obtains a KKT error less than $\|T_{S_k}(x_{k, 0}, \lambda_{k-1, N_{k-1}})\|$ for the subsampled problem at $x_{k, 0}$. To ensure the algorithm is well defined at $k=0$, we define $N_{-1} = 0$.    
        \item \textbf{Inner Loop (Lines \ref{EQ_general:termination}-\ref{EQ_general:update}):} We update the iterate and dual variable until the termination criterion \eqref{eq:termination_criterion_eq} is satisfied. Any deterministic solver that can achieve this in a finite number of steps can be used in the inner loop. 
    \end{enumerate}
\end{remark}

In \cref{alg:Equality_Constrained_RA}, one should choose a deterministic solver suitable for the subsampled problems. 
While \texttt{SQP} methods work well for general equality constraints, one might prefer projection-based methods when the constraints are simple and projections are inexpensive.
If the selected algorithm does not update the dual variable, a least squares problem, similar to \texttt{Reinitialize} in \eqref{eq:dual_update_eq}, can be solved within the inner loop to update it. 
Alternatively, the termination criterion $\Tcal_k$ may use solver-specific optimality measures instead of the KKT error. We discuss such criterion when discussing the \texttt{SQP} method in \cref{sec:SQP_based_algorithm_eq}.

\subsection{General RA Framework Convergence Analysis} \label{sec:General_Deterministic_Solver_theory}
In this section, we present theoretical results for \cref{alg:Equality_Constrained_RA}. We first establish the basic conditions required for convergence with any deterministic solver. 
Then, we propose an adaptive sampling strategy for selecting the batch size and provide complexity results in terms of the number of inner loop iterations and gradient evaluations.

To begin, we define $\Fcal_k$ as the $\sigma$-algebra corresponding to the initial conditions, i.e., $\{x_{0, 0}, \lambda_{-1, 0}\}$ and sample sets selected up to outer iteration $k$, i.e., $\{S_0, S_1, \dots, S_{k-1} \}$. This results in the nested structure $\Fcal_0 \subseteq \Fcal_1 \subseteq \dots \subseteq \Fcal_k$, with $\Fcal_0 = \{x_{0, 0}, \lambda_{-1, 0}\}$.
Given a sample set $S_k$, the next outer iterate $x_{k+1, 0} = x_{k, N_k}$ is obtained via a deterministic process starting from $x_{k, 0}$. 
Thus, the iterate $x_{k, 0}$ is specified under $\Fcal_k$ while the sample set $S_k$ is independent of $\Fcal_k$, and we only need to define the filtration over the outer iterations. 
The total expectation is then defined as the expectation conditioned on the initial conditions, i.e., $\Embb[\cdot] = \Embb[\cdot | \Fcal_0]$.
We now state some basic assumptions for optimization of smooth functions over equality constraints.

\begin{assumption} \label{ass:EQ_base assumption}
    Let $\chi \subset \Rmbb^n$ be an open convex set containing the sequence of iterates generated by any run of \cref{alg:Equality_Constrained_RA}. 
    The objective function $f : \Rmbb^n \rightarrow \Rmbb$ is continuously differentiable and bounded below over $\chi$, and its gradient function $\nabla f : \Rmbb^n \rightarrow \Rmbb^n$ is Lipschitz continuous over $\chi$. 
    The constraint function $c : \Rmbb^n \rightarrow \Rmbb^{m}$ is continuously differentiable 
    and the Jacobian function, $ J = \nabla c^T : \Rmbb^n \rightarrow \Rmbb^{m \times n}$ is Lipschitz continuous over $\chi$. The Jacobian $J(x)$ over $x \in \chi$ has full row rank, i.e., the linear independence constraint qualification (LICQ) holds. 
\end{assumption}

\begin{assumption} \label{ass:well_posed}
    For any run of \cref{alg:Equality_Constrained_RA},  the inner loop terminates finitely satisfying \eqref{eq:termination_criterion_eq}, i.e., $N_k < \infty$ for all $k \geq 0$.
\end{assumption}

\begin{assumption} \label{ass:Bounded_outer_gradient}
    Let $\{x_{k, N_k}\}$ be the sequence of outer iterates generated by any run of \cref{alg:Equality_Constrained_RA}. There exists $\kappa_g \geq 0$ such that:
    \begin{enumerate}
        \item For the finite-sum problem \eqref{eq:intro_deter_error_obj}: $\|\nabla f(x_{k, N_k})\|^2 \leq \kappa_g^2$ for all $k \geq 0$.
        \item For the expectation problem \eqref{eq:intro_stoch_error_obj}: $\Embb\left[\|\nabla f(x_{k, N_k})\|^2 | \Fcal_k\right] \leq \tilde{\kappa}_g^2$ for all $k \geq 0$.
    \end{enumerate}
\end{assumption}

\cref{ass:EQ_base assumption} is a standard assumption in the equality constrained optimization literature, see e.g., \cite{nocedal2006numerical,byrd2008inexact}, that ensures the existence of a first-order stationary point under the LICQ.
\cref{ass:well_posed} guarantees that the deterministic solver used in the inner loop is well-posed and will terminate finitely.
\cref{ass:Bounded_outer_gradient} imposes bounds on the gradient of the true objective function over the iterates generated by Algorithm \ref{alg:Equality_Constrained_RA}.
This assumption is common in constrained optimization for both deterministic \cite{powell2006fast,nocedal2006numerical,byrd2008inexact} and stochastic \cite{berahas2021sequential,curtis2024worst,o2024two} settings, where the gradient of the true problem is typically assumed to be deterministically bounded over the set of iterates. 
Thus, \cref{ass:Bounded_outer_gradient} do not pose strict restrictions on our analysis.

We now present standard assumptions on the error in the subsampled objective function gradients, similar to \cite{bollapragada2018adaptive, berahas2022adaptive, friedlander2012hybrid}.
\begin{assumption} \label{ass:gradient_errors}
For all $x \in \chi$, the individual component gradients are bounded relative to the gradient of the true objective functions: 
\begin{enumerate}
    \item For the finite-sum problem \eqref{eq:intro_deter_error_obj}: There exist constants $\omega_1, \omega_2 \geq 0$ such that,
    \begin{equation}
        \|\nabla F(x, \xi)\|^2 \leq \omega_1^2 \|\nabla f(x)\|^2 + \omega_2^2 \quad \text{for all} \,\, \xi \in \Scal.
    \end{equation}
    \item For the expectation problem \eqref{eq:intro_stoch_error_obj}: There exist constants $\Tilde{\omega}_1, \Tilde{\omega}_2 \geq 0$ such that,
    \begin{equation} 
        \E \left[\|\nabla f(x) - \nabla F(x, \xi)\|^2 | x \right] \leq \Tilde{\omega}_1^2 \|\nabla f(x)\|^2 + \Tilde{\omega}_2^2.
    \end{equation}    
\end{enumerate}
\end{assumption}
For the finite-sum problem \eqref{eq:intro_deter_error_obj}, under \cref{ass:gradient_errors} by 
\cite[Section 3.1]{friedlander2012hybrid}, the error in the subsampled gradient over a sample set $S \subseteq \Scal$ can be bounded as,
\begin{equation} \label{eq:deter_sampled_gradient_error}
    \|\nabla f(x) - g_{S}(x)\|^2 \leq 4 \left(1 - \tfrac{|S|}{|\Scal|}\right)^2 (\omega_1^2 \|\nabla f(x)\|^2 + \omega_2^2) \quad \text{for all} \,\, x \in \chi.
\end{equation}
For the expectation problem \eqref{eq:intro_stoch_error_obj}, the error in the subsampled gradient over a set of i.i.d. samples $S$ from the distribution $\Pcal$, independent of $x$, can be bounded as,
\begin{equation} \label{eq:stoch_sampled_gradient_error}
    \Embb[\|\nabla f(x) - g_{S}(x)\|^2 | x] = \tfrac{1}{|S|^2} \sum_{\xi \in S} \Embb[\|\nabla f(x) - \nabla F(x, \xi)\|^2 | x]  \leq \tfrac{\Tilde{\omega}_1^2 \|\nabla f(x)\|^2 + \Tilde{\omega}_2^2}{|S|}.
\end{equation} 
We define an additional gradient error metric for the expectation problem \eqref{eq:intro_stoch_error_obj} over a set of samples $S$, given by
\begin{equation} \label{eq:G_metric_def}
    G_{S} = \max_{x \in \chi} \tfrac{\|\nabla f(x) - g_{S}(x)\|}{\epsilon_G + \|\nabla f(x)\|},
\end{equation}
where $\epsilon_G > 0$, 
which represents the maximum error in the subsampled gradient relative to the true gradient, as defined in \cite{newton2023retrospective,newtonretrospective} for analyzing the \texttt{RA} framework for unconstrained optimization. 

To analyze \cref{alg:Equality_Constrained_RA}, we first establish a recursive bound on the KKT error of the true problem across outer iterations.

\begin{lemma} \label{lem:main_error_bound}
Suppose Assumptions \ref{ass:EQ_base assumption} and \ref{ass:well_posed} hold.
Then, for all $k \geq 0$, the outer iterates generated by \cref{alg:Equality_Constrained_RA} satisfy
\begin{align*}
    \|T(x_{k, N_k}, \lambda_{k, N_k})\|
    &\leq  \gamma_k \|T(x_{k-1, N_{k-1}}, \lambda_{k-1, N_{k-1}})\| + \epsilon_k \numberthis \label{eq:Eq_error_recursion_1} \\
    & \quad + \left\|\nabla f(x_{k, N_k}) - g_{S_k}(x_{k, N_k})\right\| + \gamma_k \|\nabla f(x_{k, 0}) - g_{S_k}(x_{k, 0})\|.
\end{align*}
For the expectation problem \eqref{eq:intro_stoch_error_obj}, for all $k \geq 0$
\begin{align*}
    \Embb\left[\|T(x_{k, N_k}, \lambda_{k, N_k})\| | \Fcal_k\right]
    &\leq \gamma_k \|T(x_{k-1, N_{k-1}}, \lambda_{k-1, N_{k-1}})\| + \Embb[\epsilon_k | \Fcal_k] \numberthis \label{eq:Eq_error_recursion_1_expectation} \\
    & \quad + \Embb[\left\|\nabla f(x_{k, N_k}) - g_{S_k}(x_{k, N_k})\right\| | \Fcal_k]\\
    & \quad + \gamma_k \Embb[\|\nabla f(x_{k, 0}) - g_{S_k}(x_{k, 0})\| | \Fcal_k].
\end{align*}
\end{lemma}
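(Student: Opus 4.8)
The plan is to exploit the fact that the true KKT map $T$ and the subsampled KKT map $T_{S_k}$ share an identical constraint block and differ only in the stationarity block (the term $\nabla c(x)\lambda$ cancels), so that for every $(x,\lambda)$ one has $\|T(x,\lambda) - T_{S_k}(x,\lambda)\| = \|\nabla f(x) - g_{S_k}(x)\|$. This single identity is the bridge that transfers the accuracy guarantee the inner-loop solver delivers for the subsampled problem into a statement about the true problem, with the gradient sampling error entering as the only penalty.

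First I would apply the triangle inequality at the terminal inner iterate,
\[
\|T(x_{k,N_k}, \lambda_{k,N_k})\| \le \|T_{S_k}(x_{k,N_k}, \lambda_{k,N_k})\| + \|\nabla f(x_{k,N_k}) - g_{S_k}(x_{k,N_k})\|,
\]
which already isolates the $\|\nabla f(x_{k,N_k}) - g_{S_k}(x_{k,N_k})\|$ term. Since $N_k < \infty$ by \cref{ass:well_posed}, the termination criterion \eqref{eq:termination_criterion_eq} holds at $j = N_k$, giving $\|T_{S_k}(x_{k,N_k}, \lambda_{k,N_k})\| \le \gamma_k \|T_{S_k}(x_{k,0}, \lambda_{k,0})\| + \epsilon_k$. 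It then remains to control the initial subsampled KKT error $\|T_{S_k}(x_{k,0}, \lambda_{k,0})\|$. Here I would invoke the defining property of the dual initialization \eqref{eq:dual_update_eq}, namely $\|T_{S_k}(x_{k,0}, \lambda_{k,0})\| \le \|T_{S_k}(x_{k,0}, \lambda_{k-1,N_{k-1}})\|$ (equality for \texttt{Carry-Over}, optimality of the least-squares solution for \texttt{Reinitialize}, and an assumption for any admissible inexact variant), together with the warm-start identity $x_{k,0} = x_{k-1,N_{k-1}}$. A second use of the gradient-block identity then gives $\|T_{S_k}(x_{k,0}, \lambda_{k-1,N_{k-1}})\| \le \|T(x_{k-1,N_{k-1}}, \lambda_{k-1,N_{k-1}})\| + \|\nabla f(x_{k,0}) - g_{S_k}(x_{k,0})\|$. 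Chaining these bounds and collecting the $\gamma_k$ factor produces \eqref{eq:Eq_error_recursion_1}.

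For the expectation problem I would take the conditional expectation $\Embb[\cdot \mid \Fcal_k]$ of \eqref{eq:Eq_error_recursion_1}. The key is the measurability bookkeeping: the warm-started iterate $x_{k,0} = x_{k-1,N_{k-1}}$ and the carried dual $\lambda_{k-1,N_{k-1}}$ are produced by a deterministic inner-loop run on $S_{k-1}$ and are therefore $\Fcal_k$-measurable, so the term $\gamma_k\|T(x_{k-1,N_{k-1}}, \lambda_{k-1,N_{k-1}})\|$ passes through the conditional expectation unchanged; by contrast $S_k$ is drawn independently of $\Fcal_k$, so the two sampling-error terms and the (possibly adaptively chosen) tolerance $\epsilon_k$ remain as genuine conditional expectations over the fresh batch. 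This yields \eqref{eq:Eq_error_recursion_1_expectation}.

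I expect the argument to be essentially routine, being a chain of triangle inequalities built on the gradient-block identity. The steps that require genuine care are, first, justifying the bound on $\|T_{S_k}(x_{k,0}, \lambda_{k,0})\|$ uniformly across the \texttt{Carry-Over}, \texttt{Reinitialize}, and inexact dual-initialization options, and second, the measurability argument in the stochastic case that cleanly separates the $\Fcal_k$-measurable previous-iterate quantities from the $\Fcal_k$-independent batch $S_k$.
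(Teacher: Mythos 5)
Your proposal is correct and follows essentially the same route as the paper's proof: a triangle inequality at the terminal iterate combined with the identity $\|T - T_{S_k}\| = \|\nabla f - g_{S_k}\|$, the termination criterion \eqref{eq:termination_criterion_eq}, the dual-initialization property of \eqref{eq:dual_update_eq} together with $x_{k,0} = x_{k-1,N_{k-1}}$, a second triangle inequality, and finally conditional expectation given $\Fcal_k$ using the $\Fcal_k$-measurability of the previous outer iterate. No gaps.
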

\begin{proof}
The KKT error for the true problem at the end of outer iteration $k \geq 0$ can be bounded as,
\begin{align*}
    \|T(x_{k, N_k}, \lambda_{k, N_k})\|
    &\leq \|T_{S_k}(x_{k, N_{k}}, \lambda_{k, N_{k}})\| + \|T(x_{k, N_{k}}, \lambda_{k, N_{k}}) - T_{S_k}(x_{k, N_{k}}, \lambda_{k, N_{k}})\|\\
    &= \|T_{S_k}(x_{k, N_k}, \lambda_{k, N_k})\| + \left\|\nabla f (x_{k, N_k}) - g_{S_k}(x_{k, N_k})\right\|\\
    &\leq  \gamma_k \|T_{S_k}(x_{k, 0}, \lambda_{k, 0})\| + \epsilon_k + \left\|\nabla f(x_{k, N_k}) - g_{S_k}(x_{k, N_k})\right\|\\ 
    &\leq \gamma_k \|T_{S_k}(x_{k-1, N_{k-1}}, \lambda_{k-1, N_{k-1}})\| + \epsilon_k + \left\|\nabla f(x_{k, N_k}) - g_{S_k}(x_{k, N_k})\right\|\\ 
    &\leq \gamma_k \|T(x_{k-1, N_{k-1}}, \lambda_{k-1, N_{k-1}})\| + \epsilon_k + \left\|\nabla f(x_{k, N_k}) - g_{S_k}(x_{k, N_k})\right\| \\
    &\quad + \gamma_k \|T(x_{k-1, N_{k-1}}, \lambda_{k-1, N_{k-1}}) - T_{S_k}(x_{k-1, N_{k-1}}, \lambda_{k-1, N_{k-1}})\| \\
    &= \gamma_k \|T(x_{k-1, N_{k-1}}, \lambda_{k-1, N_{k-1}})\| + \epsilon_k \\
    &\quad + \left\|\nabla f(x_{k, N_k}) - g_{S_k}(x_{k, N_k})\right\| + \gamma_k \|\nabla f(x_{k, 0}) - g_{S_k}(x_{k, 0})\| ,  
\end{align*}
where the second inequality follows from \eqref{eq:termination_criterion_eq}, 
the third inequality follows from $x_{k-1, N_{k-1}} = x_{k, 0}$ and the update rule \eqref{eq:dual_update_eq} for $\lambda_{k, 0}$, thus completing the proof for \eqref{eq:Eq_error_recursion_1}. Taking the conditional expectation of \eqref{eq:Eq_error_recursion_1} given $\Fcal_k$ yields \eqref{eq:Eq_error_recursion_1_expectation}, since $(x_{k-1, N_{k-1}}, \lambda_{k-1, N_{k-1}})$ are known under $\Fcal_k$.
\end{proof}

We now establish the convergence of \cref{alg:Equality_Constrained_RA}.
\begin{theorem} \label{th:Eq_convergence}
    Suppose Assumptions \ref{ass:EQ_base assumption}, \ref{ass:well_posed}, \ref{ass:Bounded_outer_gradient} and \ref{ass:gradient_errors} hold for any run of \cref{alg:Equality_Constrained_RA}.
    \begin{enumerate}
        \item For the finite-sum problem \eqref{eq:intro_deter_error_obj}: 
        If the sample set sequence is chosen such that $\{S_k\} \rightarrow \Scal$ and the termination criterion parameters as $0 \leq \{\gamma_k\} \leq \gamma < 1$ and $\epsilon_k \rightarrow 0$,
        then, $\|T(x_{k, N_k}, \lambda_{k, N_k})\| \rightarrow 0$ and $\{x_{k, N_k}\}$ converges to a first-order stationary point of \eqref{eq:intro_problem}.
        \item For the expectation problem \eqref{eq:intro_stoch_error_obj}: 
        If the batch size sequence is chosen such that $\{|S_k|\} \rightarrow \infty$, the termination criterion parameters as $0 \leq \{\gamma_k\} \leq \tilde{\gamma} < 1$ and $\Embb[\epsilon_k | \Fcal_k] \rightarrow 0$, and given that $\Embb[G_{S_k}^2 | \Fcal_k] \rightarrow 0$ as $\{|S_k|\} \rightarrow \infty$, then, $\Embb[\|T(x_{k, N_k}, \lambda_{k, N_k})\|] \rightarrow 0$ and $\{x_{k, N_k}\}$ converges to a first-order stationary point of \eqref{eq:intro_problem} in expectation.
    \end{enumerate}
\end{theorem}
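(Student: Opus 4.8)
The plan is to read \cref{lem:main_error_bound} as a scalar recursion in the true KKT residual $t_k := \|T(x_{k,N_k},\lambda_{k,N_k})\|$ and to reduce both claims to the elementary fact that a nonnegative sequence obeying $t_k \le \rho\, t_{k-1} + e_k$ with $\rho\in[0,1)$ and $e_k\to 0$ must converge to zero. This follows by unrolling, $t_k \le \rho^k t_0 + \sum_{i=1}^{k}\rho^{k-i}e_i$, and splitting the convolution sum at an index beyond which $e_i$ is small: the geometric tail of the early block vanishes as $k\to\infty$, while the late block is bounded by $\sup_{i>M}e_i/(1-\rho)$, which is arbitrarily small. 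Thus the whole argument reduces to showing that the perturbation collected on the right-hand side of \eqref{eq:Eq_error_recursion_1} (resp.\ \eqref{eq:Eq_error_recursion_1_expectation}), namely $e_k := \epsilon_k + \|\nabla f(x_{k,N_k}) - g_{S_k}(x_{k,N_k})\| + \gamma_k\|\nabla f(x_{k,0}) - g_{S_k}(x_{k,0})\|$, tends to zero, after which the contraction factor $\gamma$ (resp.\ $\tilde\gamma$) closes the recursion.

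For the finite-sum problem the perturbation is handled deterministically. The term $\epsilon_k\to 0$ by hypothesis, and for each gradient-error term I would invoke the uniform bound \eqref{eq:deter_sampled_gradient_error} together with the outer-gradient bound in \cref{ass:Bounded_outer_gradient}: since $x_{k,N_k}$ and $x_{k,0}=x_{k-1,N_{k-1}}$ are both outer iterates, $\|\nabla f(\cdot)\|^2\le\kappa_g^2$ there, so each term is at most $2\bigl(1-|S_k|/|\Scal|\bigr)\sqrt{\omega_1^2\kappa_g^2+\omega_2^2}$, which vanishes because $\{S_k\}\to\Scal$ forces $|S_k|/|\Scal|\to 1$. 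Hence $e_k\to 0$ and the scalar recursion gives $t_k\to 0$. Note there is \emph{no} measurability obstruction here, since \eqref{eq:deter_sampled_gradient_error} holds for every $x\in\chi$, so the dependence of $x_{k,N_k}$ on $S_k$ is irrelevant.

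The expectation problem is where the real work lies, and the crux is precisely that $x_{k,N_k}$ is produced by the inner solver run on $S_k$ and is therefore \emph{not} independent of $S_k$; consequently the per-point variance bound \eqref{eq:stoch_sampled_gradient_error} cannot be applied to $\|\nabla f(x_{k,N_k}) - g_{S_k}(x_{k,N_k})\|$. This is exactly what the uniform relative-error metric $G_{S_k}$ of \eqref{eq:G_metric_def} is designed to absorb: by its definition $\|\nabla f(x_{k,N_k}) - g_{S_k}(x_{k,N_k})\| \le G_{S_k}\bigl(\epsilon_G + \|\nabla f(x_{k,N_k})\|\bigr)$ pointwise, and taking $\Embb[\cdot\,|\,\Fcal_k]$ and applying Cauchy--Schwarz with the expectation form of \cref{ass:Bounded_outer_gradient} yields $\Embb[\|\nabla f(x_{k,N_k}) - g_{S_k}(x_{k,N_k})\|\,|\,\Fcal_k] \le (\epsilon_G + \tilde\kappa_g)\sqrt{\Embb[G_{S_k}^2\,|\,\Fcal_k]}$, which $\to 0$ under the stated hypothesis $\Embb[G_{S_k}^2\,|\,\Fcal_k]\to 0$. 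The remaining term at $x_{k,0}=x_{k-1,N_{k-1}}$ is benign: $x_{k,0}$ is $\Fcal_k$-measurable while $S_k$ is independent of $\Fcal_k$, so \eqref{eq:stoch_sampled_gradient_error} applies directly and, with Jensen's inequality and $\Embb[\|\nabla f(x_{k,0})\|^2]\le\tilde\kappa_g^2$ (tower property plus \cref{ass:Bounded_outer_gradient}), its expectation is $\Ocal(|S_k|^{-1/2})\to 0$ as $\{|S_k|\}\to\infty$.

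Finally I would take total expectations in \eqref{eq:Eq_error_recursion_1_expectation}, using $\gamma_k\le\tilde\gamma<1$ and the tower property, to obtain $\Embb[t_k]\le\tilde\gamma\,\Embb[t_{k-1}] + \delta_k$ with $\delta_k := \Embb[\epsilon_k] + (\text{the two gradient-error expectations above})\to 0$; the scalar recursion lemma then gives $\Embb[t_k]\to 0$. In both cases, convergence of $\{x_{k,N_k}\}$ to a first-order stationary point follows from $\|T(x_{k,N_k},\lambda_{k,N_k})\|\to 0$ (resp.\ in expectation), since under \cref{ass:EQ_base assumption} the map $T$ is continuous and LICQ holds, so any limit point satisfies the KKT system. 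The anticipated \emph{main obstacle} is the $G_{S_k}$ estimate in the third step, namely decoupling the $S_k$-dependence of $x_{k,N_k}$ from the sampling error through the uniform metric; every other step is a routine application of the error bounds and the elementary recursion.
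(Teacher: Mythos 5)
Your proposal is correct and takes essentially the same route as the paper's proof: both rest on the recursion of \cref{lem:main_error_bound}, handle the unbiased gradient error at $x_{k,0}$ via \eqref{eq:stoch_sampled_gradient_error} and the $S_k$-dependent error at $x_{k,N_k}$ via the uniform metric \eqref{eq:G_metric_def} with Cauchy--Schwarz and \cref{ass:Bounded_outer_gradient}, and close with a geometric-contraction-plus-vanishing-perturbation argument, where your inline splitting of the convolution sum is exactly the paper's \cref{lem:series_for_convergence}. The only cosmetic difference is that you take total expectation of the one-step recursion before unrolling while the paper unrolls the conditional recursion first and then applies the tower property; the two orderings are equivalent.
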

\begin{proof}
    For the finite-sum problem \eqref{eq:intro_deter_error_obj}, unrolling \eqref{eq:Eq_error_recursion_1} from \cref{lem:main_error_bound} yields for all $k \geq 0$,
    \begin{align*}
        \|T(x_{k, N_k}, \lambda_{k, N_k})\|
        & \leq \left\{\prod_{h = 0}^{k} \gamma_h \right\}\|T(x_{0, 0}, \lambda_{-1, 0})\|  + \epsilon_k  \numberthis \label{eq:eq_recurion_finite}\\
        &\quad + \|\nabla f(x_{k, N_k}) - g_{S_k}(x_{k, N_k})\|+ \gamma_k \|\nabla f(x_{k, 0}) - g_{S_k}(x_{k, 0})\|\\
        &\quad + \sum_{i=0}^{k-1} \left\{\prod_{h = i+1}^{k} \gamma_h \right\} \left[\epsilon_i + \|\nabla f(x_{i, N_i}) - g_{S_i}(x_{i, N_i})\| \right] \\
        &\quad + \sum_{i=0}^{k-1} \left\{\prod_{h = i+1}^{k} \gamma_h \right\} \gamma_i \|\nabla f(x_{i, 0}) - g_{S_i}(x_{i, 0})\| \\
        &\leq \left\{\prod_{h = 0}^{k} \gamma_h \right\}\|T(x_{0, 0}, \lambda_{-1, 0})\|  + \epsilon_k \\
        &\quad + 2 \left(\tfrac{|\Scal| - |S_k|}{|\Scal|}\right) \left(\omega_1 (\|\nabla f(x_{k, N_k})\| +\gamma_k \|\nabla f(x_{k, 0})\|) + \omega_2 (1 + \gamma_k)
        \right)  \\
        &\quad + \sum_{i=0}^{k-1} \left\{\prod_{h = i+1}^{k} \gamma_h \right\} \left[\epsilon_i + \left(\tfrac{|\Scal| - |S_i|}{|\Scal|}\right)\left(\omega_1 \|\nabla f(x_{i, N_i})\| + \omega_2 \right) \right]  \\
        &\quad + 2 \sum_{i=0}^{k-1} \left\{\prod_{h = i+1}^{k} \gamma_h \right\} \gamma_i \left(\tfrac{|\Scal| - |S_i|}{|\Scal|}\right) \left[\omega_1 \|\nabla f(x_{i, 0})\| + \omega_2 \right]\\
        &\leq \left\{\prod_{h = 0}^{k} \gamma_h \right\}\|T(x_{0, 0}, \lambda_{-1, 0})\|  + \epsilon_k  + 2 \left(\tfrac{|\Scal| - |S_k|}{|\Scal|}\right) (1 + \gamma_k) \left(\omega_1 \kappa_g + \omega_2 \right) \\
        &\quad + \sum_{i=0}^{k-1} \left\{\prod_{h = i+1}^{k} \gamma_h \right\} \left[\epsilon_i 
        + 2 \left(\tfrac{|\Scal| - |S_i|}{|\Scal|}\right) (1 + \gamma_i)\left(\omega_1 \kappa_g  + \omega_2 
        \right)
        \right] ,
    \end{align*}
    where the second inequality follows from \eqref{eq:deter_sampled_gradient_error} and third inequality follows by \cref{ass:Bounded_outer_gradient}. 
    The first term in the above inequality converges to zero as $\left\{\prod_{h = 0}^{k} \gamma_h \right\} \leq \gamma^{k+1}$ where $\gamma < 1$. The second and third terms converge to zero under the specifications $\epsilon_k \rightarrow 0$ and $\{|S_k|\} \rightarrow |\Scal|$, respectively. The final term is a sum of the form $\sum_{i=0}^{k-1}a_i \left\{\prod_{h = i+1}^{k} \gamma_h \right\} \leq \sum_{i=0}^{k-1}a_i \gamma^{k - i}$ which converges to zero by \cref{lem:series_for_convergence}, with $b_i = a_i = \epsilon_i + 2 \left(\tfrac{|\Scal| - |S_i|}{|\Scal|}\right) (1 + \gamma_i)\left(\omega_1 \kappa_g  + \omega_2 \right) \rightarrow 0$.
    Thus, all terms in the KKT error bound converge to zero completing the proof.

    For the expectation problem \eqref{eq:intro_stoch_error_obj}, taking the total expectation of \eqref{eq:eq_recurion_finite} and applying the tower property, i.e., $\Embb[\epsilon_k] = \Embb[\Embb[\epsilon_k | \Fcal_k]]$, for all relevant quantities yields for all $k \geq 0$,
    \begin{align*}
        \Embb[\|T(x_{k, N_k}, \lambda_{k, N_k})\|]
        &\leq \left\{\prod_{h = 0}^{k} \gamma_h \right\}\|T(x_{0, 0}, \lambda_{-1, 0})\|  + \Embb[\Embb[\epsilon_k | \Fcal_k]] \numberthis \label{eq:eq_recursion_expec}\\
        &\quad + \Embb\left[\Embb[\|\nabla f(x_{k, N_k}) - g_{S_k}(x_{k, N_k})\|| \Fcal_k]\right]\\
        &\quad + \gamma_k \Embb\left[\Embb[\|\nabla f(x_{k, 0}) - g_{S_k}(x_{k, 0})\|| \Fcal_k]\right]\\
        &\quad + \sum_{i=0}^{k-1} \left\{\prod_{h = i+1}^{k} \gamma_h \right\} \Embb\left[\Embb[\epsilon_i| \Fcal_i] + \Embb[\|\nabla f(x_{i, N_i}) - g_{S_i}(x_{i, N_i})\|| \Fcal_i]\right]  \\
        &\quad + \sum_{i=0}^{k-1} \left\{\prod_{h = i+1}^{k} \gamma_h \right\} \gamma_i \Embb\left[\Embb[\|\nabla f(x_{i, 0}) - g_{S_i}(x_{i, 0})\|| \Fcal_i] \right] .
    \end{align*}
    The result contains two types of errors for the subsampled gradients.
    The first is $\Embb[\|\nabla f(x_{i, 0}) - g_{S_i}(x_{i, 0})\|| \Fcal_i]$, where $S_i$ is independent of $\Fcal_i$ (and thus $x_{i, 0}$), and \eqref{eq:stoch_sampled_gradient_error} holds. 
    The second is $\Embb[\|\nabla f(x_{i, N_i}) - g_{S_i}(x_{i, N_i})\|| \Fcal_i]$ where $x_{i, N_i}$ is dependent on $S_i$. 
    Here, we utilize the metric defined in \eqref{eq:G_metric_def} as,
    \begin{align*}
    \Embb[\|\nabla f(x_{i, N_i}) - g_{S_i}(x_{i, N_i})\|| \Fcal_i] &\leq \Embb[G_{S_i}(\epsilon_G + \|\nabla f(x_{i, N_i})\|)| \Fcal_i] \\
    &= \epsilon_G\Embb[G_{S_i}|\Fcal_i] + \Embb[G_{S_i} \|\nabla f(x_{i, N_i})\||\Fcal_i]\\
    &\leq \epsilon_G\Embb[G_{S_i}|\Fcal_i] + \sqrt{\Embb[G_{S_i}^2 |\Fcal_i]\Embb[\|\nabla f(x_{i, N_i})\|^2|\Fcal_i]}, \numberthis \label{eq:biased_gradient_error_bound}
    \end{align*} 
    where the last inequality follows from the expectation version of the Cauchy–Schwarz inequality. 
    Substituting \eqref{eq:biased_gradient_error_bound} and \eqref{eq:stoch_sampled_gradient_error} into \eqref{eq:eq_recursion_expec}, along with the bound on the gradient from \cref{ass:Bounded_outer_gradient} yields,
    \begin{align*}
        \Embb[\|T(x_{k, N_k}, \lambda_{k, N_k})\|]
        &\leq  \|T(x_{0, 0}, \lambda_{-1, 0})\| \left\{\prod_{h = 0}^{k} \gamma_h \right\} \\
        &\quad + \Embb\left[\Embb[\epsilon_k | \Fcal_k] + \epsilon_G\Embb[G_{S_k}|\Fcal_k] + \tilde{\kappa}_g\sqrt{\Embb[G_{S_k}^2 |\Fcal_k]}+ \gamma_k \tfrac{\Tilde{\omega}_1 \tilde{\kappa}_g + \Tilde{\omega}_2}{\sqrt{|S_k|}}\right] \\
        &\quad + \sum_{i=0}^{k-1} \left\{\prod_{h = i+1}^{k} \gamma_h \right\} \Embb\left[\Embb[\epsilon_i| \Fcal_i] + \epsilon_G\Embb[G_{S_i}|\Fcal_i] + \tilde{\kappa}_g\sqrt{\Embb[G_{S_i}^2 |\Fcal_i]}\right] \\
        &\quad + \sum_{i=0}^{k-1} \left\{\prod_{h = i+1}^{k} \gamma_h \right\} \gamma_i \Embb\left[ \tfrac{\Tilde{\omega}_1 \tilde{\kappa}_g + \Tilde{\omega}_2}{\sqrt{|S_i|}} \right].
    \end{align*}
    In the above bound, the first term is the same as the first term for the finite-sum problem (above). Moreover, all terms within the second term expectation converge to zero under the specifications $\Embb[\epsilon_k | \Fcal_k] \rightarrow 0$ and $\{|S_k|\} \rightarrow \infty$, and the assumption $\Embb[G_{S_k}^2  | \Fcal_k] \rightarrow 0$. The last two terms together are a series of the form $\sum_{i=0}^{k-1}\Embb[a_i] \left\{\prod_{h = i+1}^{k} \gamma_h \right\} \leq \sum_{i=0}^{k-1} \Embb[a_i] \tilde{\gamma}^{k - i}$ which converges to zero by \cref{lem:series_for_convergence}, with  $b_i = \Embb[a_i] \rightarrow 0$ as $a_i \rightarrow 0$, thus completing the proof.
\end{proof}

\cref{th:Eq_convergence} establishes the convergence of \cref{alg:Equality_Constrained_RA} for stochastic problems with deterministic equality constraints, using any deterministic solver in the inner loop. 
While most of the requirements are standard assumptions, for the expectation problem \eqref{eq:intro_stoch_error_obj}, an additional assumption that $\Embb[G_{S_k}^2 | \Fcal_k] \rightarrow 0$ is needed. 
This ensures that the maximum deviation of the subsampled gradient from the true problem gradient decreases with the sample size when normalized by the true problem gradient, thereby enforcing the uniform convergence of the error in subsampled gradients. This condition is not strict and is satisfied by many error classes due to the relative nature of the error, as shown in \cite{newton2023retrospective,newtonretrospective} for unconstrained stochastic optimization.

We now introduce a strategy to adaptively select the batch size $|S_k|$ in \cref{alg:Equality_Constrained_RA}. This strategy is a modification of the well-known norm test \cite{bollapragada2018adaptive,o2024fast,byrd2012sample,carter1991global}.
\begin{condition} \label{cond:eq_norm_test}
    At the beginning of outer iteration $k \geq 0$ in \cref{alg:Equality_Constrained_RA}, $|S_k|$ is selected such that:
    \begin{enumerate}
        \item For the finite-sum problem \eqref{eq:intro_deter_error_obj}: With constants $\theta, \hat{\theta}, a \geq 0$ and $\beta \in (0, 1)$, 
        \begin{align*}
            \|\nabla f(x_{k, 0}) - g_{S_k}(x_{k, 0})\|^2 &\leq \theta^2 \|T(x_{k, 0}, \lambda_{k-1, N_{k-1}})\|^2 + a^2 \beta^{2k},\\
            \|\nabla f(x_{k, N_k}) - g_{S_k}(x_{k, N_k})\|^2 &\leq \hat{\theta}^2 \left(\theta^2 \|T(x_{k, 0}, \lambda_{k-1, N_{k-1}})\|^2 + a^2 \beta^{2k}\right).
        \end{align*}
        \item For the expectation problem \eqref{eq:intro_stoch_error_obj}: With constants $\tilde{\theta}, \tilde{a} \geq 0$ and $\tilde{\beta} \in (0, 1)$, 
        \begin{align*}
            \Embb\left[\|\nabla f(x_{k, 0}) - g_{S_k}(x_{k, 0})\|^2 | \Fcal_k\right] \leq \tilde{\theta}^2 \|T(x_{k, 0}, \lambda_{k-1, N_{k-1}})\|^2 + \tilde{a}^2 \tilde{\beta}^{2k}.
        \end{align*}
    \end{enumerate}
\end{condition}
\begin{remark} \label{remark:norm_test_conditions}
    We make the following remarks regarding \cref{cond:eq_norm_test}.
    \begin{enumerate}
        \item \textbf{Batch size:} Under \cref{ass:gradient_errors}, \cref{cond:eq_norm_test} can always be satisfied with a sufficiently large batch size. If $\theta \,\, (\text{or } \tilde{\theta}) = 0$, the condition is equivalent to increasing the batch size at a geometric rate. When $\theta \,\, (\text{or } \tilde{\theta}) > 0$, the condition takes the KKT error in the true problem at the current iterate and dual variable into account when selecting the batch size. We later provide sequences of sufficiently large batch sizes that satisfy \cref{cond:eq_norm_test}.
        
        \item \textbf{True problem estimates:} 
        The right-hand side of the inequalities in \cref{cond:eq_norm_test}
        require access to true problem quantities which cannot be computed.
        However, we utilize these conditions to understand the permissible errors in the algorithm that guarantee convergence and develop a practical strategy using sampled estimates, detailed in \cref{sec:Numerical_experiments}.
        The condition can also be utilized with any other measure of optimality for the problem instead of the KKT error, as will be discussed in the context of solver-specific termination criteria in \cref{sec:SQP_based_algorithm_eq_theory}.
        
        \item \textbf{Finite-sum condition:} For the finite-sum problem \eqref{eq:intro_deter_error_obj}, \cref{cond:eq_norm_test} bounds the gradient error at the start and end of the inner loop. While this is a stronger condition than usually considered for the norm test, due to the finite-sum nature of the problem, if one controls the gradient error at a particular iterate, one can ensure the gradient error is bounded over the space of iterates. Thus, with a large enough constant $\hat{\theta}$, the condition can be satisfied by selecting a batch size with small enough gradient error at the start of the inner loop.
    \end{enumerate}
\end{remark}
We proceed to analyze \cref{alg:Equality_Constrained_RA} with batch sizes selected to satisfy \cref{cond:eq_norm_test}.
First, we introduce additional assumptions to characterize the variance of the expectation problem \eqref{eq:intro_stoch_error_obj}.  
\begin{assumption} \label{ass:Variance_regularity}
    We make the following assumptions regarding the expectation problem \eqref{eq:intro_stoch_error_obj}.
    \begin{enumerate}
        \item CLT Scaling: The gradient error metric \eqref{eq:G_metric_def} follows CLT-scaling, i.e., there exists $\kappa_{G} \geq 0$ such that for any set of i.i.d. samples $S$ from $\Pcal$, $\Embb\left[G_{S}^2\right] \leq \tfrac{\kappa_{G}^2}{|S|}$.
        \item Variance Lower Bound: The variance of the subsampled gradients is lower bounded, i.e., there exists $\kappa_{\sigma} > 0$ such that $\Var\left(\nabla F(x_{k, 0}, \xi) | \Fcal_k\right) \geq \kappa_G^2 \kappa_{\sigma}^2$ for all $k \geq 0$.
    \end{enumerate}
\end{assumption}
In \cref{ass:Variance_regularity}, the CLT scaling assumption states that the maximum error in the subsampled gradient, when normalized by the true gradient norm, scales inversely with the sample size. This assumption, also used in \cite{newton2023retrospective,newtonretrospective} for unconstrained stochastic optimization, parallels conditions in stochastic gradient methods but applies over the entire space of iterates rather than at each individual iterate, as required in the RA framework. 

The variance lower bound in \cref{ass:Variance_regularity} is counterintuitive to the conditions prescribed in most works for stochastic optimization where the variance is assumed to be upper bounded \cite{berahas2022adaptive,curtis2023sequential,o2024fast}. 
Such an assumption becomes essential in our work to ensure that the adaptive sampling strategy of satisfying \cref{cond:eq_norm_test} results in an increasing batch size sequence, as opposed to using a deterministically prescribed sequence in \cite{newtonretrospective,curtis2023sequential,royset2006optimal,newton2023retrospective}.
While the assumption does not pose any reasonable barriers in practice, it is also not required when one uses a prescribed sequence for the batch sizes, as will also be demonstrated.
When $\kappa_G = 0$, the lower bound does not exist but increasing the batch size is unnecessary since the deviation of the subsampled gradient from the true problem gradient is zero from CLT scaling. 
When $\kappa_G > 0$, the assumption can be relaxed to requiring non-zero variance infinitely often. 
This ensures that the batch size increases across multiple outer iterations when \cref{cond:eq_norm_test} is satisfied for the expectation problem. 
If the variance is zero at outer iterates infinitely often, increasing the batch size is unnecessary, as the error in the subsampled gradient is zero infinitely often. 
Thus, the variance lower bound assumption excludes such degenerate cases from the theoretical analysis.
In practice, since $|S_k| \geq |S_{k-1}|$ is enforced, the batch size remains unchanged when zero variance is encountered. As better solutions are found, the batch size eventually increases, preventing any violations of this assumption.

We now establish the complexity guarantees for \cref{alg:Equality_Constrained_RA} to obtain an $\epsilon > 0$ stationary point, defined as $\|T(x_{k, N_k}, \lambda_{k, N_k})\| \leq \epsilon$ for the finite-sum problem \eqref{eq:intro_deter_error_obj} and in expectation as $\Embb[\|T(x_{k, N_k}, \lambda_{k, N_k})\|] \leq \epsilon$ for the expectation problem \eqref{eq:intro_stoch_error_obj}, with batch sizes selected to satisfy \cref{cond:eq_norm_test}. 
We first look at this complexity across outer iterations.

\begin{theorem} \label{th:EQ_outer_iter_complexity}
    Suppose Assumptions \ref{ass:EQ_base assumption}, \ref{ass:well_posed}, \ref{ass:Bounded_outer_gradient} and \ref{ass:gradient_errors} hold and the batch size sequence $\{|S_k|\}$ is chosen to satisfy \cref{cond:eq_norm_test} in \cref{alg:Equality_Constrained_RA}. 
    \begin{enumerate}
        \item For the finite-sum problem \eqref{eq:intro_deter_error_obj}: For all $k \geq 0$, if the termination criterion parameters are chosen such that $0 \leq \gamma_k \leq \gamma < 1$ and $\epsilon_k = \omega \|\nabla f(x_{k, 0}) - g_{S_k}(x_{k, 0})\| + \hat{\omega}\beta^k$ with $\omega, \hat{\omega} \geq 0$,  
        and \cref{cond:eq_norm_test} parameters are chosen such that $a_1 = \left[  \gamma + \theta(\omega + \hat{\theta} + \gamma)\right] < 1$, then, the KKT error converges at a linear rate across outer iterations, i.e., 
        \begin{align*}
            \|T(x_{k, N_k}, \lambda_{k, N_k})\|
            \leq \max \{a_1 + \nu, \beta\}^{k+1} \max \left\{ \|T(x_{0, 0}, \lambda_{-1, 0})\|, \tfrac{a_2}{\nu}\right\},
        \end{align*}
        where $a_2 =a(\omega + \hat{\theta}  + \gamma) + \hat{\omega}$ and $\nu > 0$ such that $a_1 + \nu < 1$.
        \item For the expectation problem \eqref{eq:intro_stoch_error_obj}: For all $k \geq 0$, if \cref{ass:Variance_regularity} is satisfied, the termination criterion parameters are chosen such that $0 \leq \gamma_k \leq \tilde{\gamma} < 1$ and $\epsilon_k = \tilde{\omega}\sqrt{\tfrac{\Var(\nabla F(x_{k, 0}, \xi) | \Fcal_k)}{|S_k|}}$ where $\tilde{\omega} \geq 0$, and \cref{cond:eq_norm_test} parameters are chosen such that $\tilde{a}_1 = \left[ \tilde{\gamma}  + \tilde{\theta} \left(\tilde{\omega} + \tfrac{(\epsilon_G + \kappa_g)}{\kappa_{\sigma}} + \tilde{\gamma}\right)\right]  < 1$, then, the expected KKT error converges at a linear rate across outer iterations, i.e., 
        \begin{align*}
            \Embb\left[\|T(x_{k, N_k}, \lambda_{k, N_k})\| \right]
            \leq  \max \{\tilde{a}_1 + \tilde{\nu}, \tilde{\beta}\}^{k+1} \max\left\{\|T(x_{0, 0}, \lambda_{-1, 0})\|, \tfrac{\tilde{a}_2}{\tilde{\nu}}\right\},
        \end{align*}
        where $\tilde{a}_2 = \tilde{a} \left(\tilde{\omega} + \tfrac{(\epsilon_G + \kappa_g)}{\kappa_{\sigma}} + \tilde{\gamma} \right)$ and $\tilde{\nu} > 0$ such that $\tilde{a}_1 + \tilde{\nu} < 1$.
    \end{enumerate}
\end{theorem}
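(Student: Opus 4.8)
The plan is to start from the recursive bound in Lemma~\ref{lem:main_error_bound} and eliminate the subsampled-gradient error terms by using the chosen values of $\epsilon_k$ together with Condition~\ref{cond:eq_norm_test}, converting the recursion into a clean one-step contraction of the form
\begin{align*}
    E_{k} \leq (a_1 + \text{small}) \, E_{k-1} + a_2 \beta^{k},
\end{align*}
where $E_k = \|T(x_{k,N_k},\lambda_{k,N_k})\|$ (or its expectation). First I would substitute $\epsilon_k = \omega\|\nabla f(x_{k,0}) - g_{S_k}(x_{k,0})\| + \hat\omega\beta^k$ into \eqref{eq:Eq_error_recursion_1}, so that the right-hand side involves only $\gamma_k E_{k-1}$, the two gradient-error terms (at $x_{k,0}$ and $x_{k,N_k}$), and the $\hat\omega\beta^k$ term. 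Next I would invoke Condition~\ref{cond:eq_norm_test}: taking square roots of both inequalities bounds $\|\nabla f(x_{k,0}) - g_{S_k}(x_{k,0})\|$ by $\theta\|T(x_{k,0},\lambda_{k-1,N_{k-1}})\| + a\beta^k$ and $\|\nabla f(x_{k,N_k}) - g_{S_k}(x_{k,N_k})\|$ by $\hat\theta(\theta\|T(x_{k,0},\lambda_{k-1,N_{k-1}})\| + a\beta^k)$, using $\sqrt{u^2+v^2}\le u+v$. Recognizing $x_{k,0}=x_{k-1,N_{k-1}}$ and $\lambda_{k,0}$ chosen so that $\|T(x_{k,0},\lambda_{k-1,N_{k-1}})\|$ dominates, the term $\|T(x_{k,0},\lambda_{k-1,N_{k-1}})\|$ is exactly $E_{k-1}$. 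Collecting the coefficients multiplying $E_{k-1}$ gives precisely $\gamma + \theta(\omega + \hat\theta + \gamma) = a_1$, and the coefficients multiplying $\beta^k$ assemble into $a_2 = a(\omega+\hat\theta+\gamma) + \hat\omega$.

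Once the recursion $E_k \le a_1 E_{k-1} + a_2\beta^k$ is in hand, I would close the argument by the standard two-geometric-sequence lemma: for any $\nu>0$ with $a_1+\nu<1$, one shows by induction that $E_k \le \max\{a_1+\nu,\beta\}^{k+1}\max\{E_{-1}, a_2/\nu\}$ where $E_{-1}=\|T(x_{0,0},\lambda_{-1,0})\|$. The inductive step splits into the case $\beta \le a_1+\nu$ (where $a_2\beta^k$ is absorbed using $a_2/\nu \le \nu\cdot(a_2/\nu^2)$ together with $\nu\,\max\{a_1+\nu,\beta\}^k$) and the case $\beta > a_1+\nu$; in both cases the bound propagates. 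This is the algebraic heart that yields the stated linear rate $\max\{a_1+\nu,\beta\}$.

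For the expectation problem the structure is identical but applied to \eqref{eq:eq_recursion_expec}/\eqref{eq:Eq_error_recursion_1_expectation}. The key differences are that I must first pass to conditional expectations and control $\Embb[\|\nabla f(x_{k,N_k}) - g_{S_k}(x_{k,N_k})\| \mid \Fcal_k]$ via the $G_{S_k}$ metric as in \eqref{eq:biased_gradient_error_bound}, then invoke Assumption~\ref{ass:Variance_regularity}: CLT scaling gives $\Embb[G_{S_k}^2] \le \kappa_G^2/|S_k|$, and the variance lower bound $\Var(\nabla F(x_{k,0},\xi)\mid\Fcal_k)\ge \kappa_G^2\kappa_\sigma^2$ lets me rewrite $\kappa_G/\sqrt{|S_k|}$ in terms of $\sqrt{\Var/|S_k|}/\kappa_\sigma$, which is exactly the quantity appearing in the chosen $\epsilon_k$. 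The combination of the $(\epsilon_G+\kappa_g)/\kappa_\sigma$ factor — arising from $\epsilon_G\Embb[G_{S_k}]+\tilde\kappa_g\sqrt{\Embb[G_{S_k}^2]}$ — with $\tilde\omega$ and $\tilde\gamma$ reproduces the coefficient $\tilde a_1$.

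The main obstacle I anticipate is the expectation case: correctly matching the two distinct gradient-error terms to their respective bounds (the $x_{k,0}$ term uses the simple variance bound \eqref{eq:stoch_sampled_gradient_error} since $S_k\perp\Fcal_k$, whereas the $x_{k,N_k}$ term is biased and requires the Cauchy--Schwarz/$G_{S_k}$ machinery), and then algebraically converting the $\kappa_G/\sqrt{|S_k|}$ scaling into the $\epsilon_k$ form via the variance lower bound so that the coefficient collapses to the clean $\tilde a_1$. The finite-sum case is more routine, its only subtlety being the use of both inequalities in Condition~\ref{cond:eq_norm_test} (start and end of the inner loop) to handle the two gradient-error terms simultaneously.
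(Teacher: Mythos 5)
Your proposal is correct and takes essentially the same route as the paper: substitute the chosen $\epsilon_k$ into the recursion of \cref{lem:main_error_bound}, use \cref{cond:eq_norm_test} (with $\sqrt{u^2+v^2}\le u+v$, and in the expectation case the $G_{S_k}$/Cauchy--Schwarz bound plus the variance lower bound of \cref{ass:Variance_regularity}) to collapse everything into $a_1\|T(x_{k-1,N_{k-1}},\lambda_{k-1,N_{k-1}})\| + a_2\beta^k$ (resp.\ $\tilde a_1,\tilde a_2,\tilde\beta$), and then close with the two-geometric-sequence induction, which is exactly the paper's \cref{lem:linear_convergence_induction}. One trivial correction to your closing remark: in the expectation case the unbiased $x_{k,0}$ gradient-error term is bounded directly by \cref{cond:eq_norm_test} via Jensen's inequality (not by \eqref{eq:stoch_sampled_gradient_error}), which is what produces the $\tilde\gamma\tilde\theta$ contribution inside $\tilde a_1$ — your main derivation already does this implicitly, since otherwise the coefficients would not assemble into the stated $\tilde a_1$.
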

\begin{proof}
    For the finite-sum problem \eqref{eq:intro_deter_error_obj}, under the stated conditions, the error bound \eqref{eq:Eq_error_recursion_1} from \cref{lem:main_error_bound} can be further simplified as,
    \begin{align*}
        \|T(x_{k, N_k}, \lambda_{k, N_k})\|
        &\leq  \gamma \|T(x_{k-1, N_{k-1}}, \lambda_{k-1, N_{k-1}})\| + \omega \left\|\nabla f(x_{k, 0}) - g_{S_k}(x_{k, 0})\right\| + \hat{\omega}\beta^k\\
        &\quad+ \left\|\nabla f(x_{k, N_k}) - g_{S_k}(x_{k, N_k})\right\| + \gamma \|\nabla f(x_{k, 0}) - g_{S_k}(x_{k, 0})\|\\
        &\leq \gamma \|T(x_{k-1, N_{k-1}}, \lambda_{k-1, N_{k-1}})\| + \omega \left(\theta \|T(x_{k, 0}, \lambda_{k-1, N_{k-1}})\| + a \beta^{k}\right) + \hat{\omega}\beta^k\\
        &\quad + \hat{\theta}(\theta \|T(x_{k, 0}, \lambda_{k-1, N_{k-1}})\| + a \beta^{k}) + \gamma \left(\theta \|T(x_{k, 0}, \lambda_{k-1, N_{k-1}})\| + a \beta^{k}\right)\\
        &=  a_1 \|T(x_{k-1, N_{k-1}}, \lambda_{k-1, N_{k-1}})\| + a_2 \beta^{k},
    \end{align*}
    where the second inequality follows from \cref{cond:eq_norm_test} and the rest follows from the defined constants with $x_{k, 0} = x_{k-1, N_{k-1}}$.
    Using the above bound, applying \cref{lem:linear_convergence_induction} with $Z_k = \|T(x_{k, N_k}, \lambda_{k, N_k})\|$, $\rho_1 = a_1$, $\rho_2 = \beta$ and $b = a_2$ completes the proof.
    
    For the expectation problem \eqref{eq:intro_stoch_error_obj}, under the stated conditions, the error bound \eqref{eq:Eq_error_recursion_1_expectation} from \cref{lem:main_error_bound} can be further simplified as,
    \begin{align*}
        \Embb\left[\|T(x_{k, N_k}, \lambda_{k, N_k})\| | \Fcal_k \right]
        &\leq \tilde{\gamma} \|T(x_{k-1, N_{k-1}}, \lambda_{k-1, N_{k-1}})\|  + \tilde{\omega}\sqrt{\tfrac{\Var(\nabla F(x_{k, 0}, \xi) | \Fcal_k)}{|S_k|}} \\
        &\quad + \Embb\left[\left\|\nabla f(x_{k, N_k}) - g_{S_k}(x_{k, N_k})\right\| | \Fcal_k \right] \\
        &\quad + \tilde{\gamma} \Embb\left[\|\nabla f(x_{k, 0}) - g_{S_k}(x_{k, 0})\| | \Fcal_k \right]\\
        &\leq \tilde{\gamma} \|T(x_{k-1, N_{k-1}}, \lambda_{k-1, N_{k-1}})\| + \tilde{\omega}\sqrt{\tfrac{\Var( \nabla F(x_{k, 0}, \xi) | \Fcal_k)}{|S_k|}}  \\
        &\quad + \kappa_G\tfrac{(\epsilon_G + \kappa_g)}{\sqrt{|S_k|}} + \tilde{\gamma} \Embb\left[\|\nabla f(x_{k, 0}) - g_{S_k}(x_{k, 0})\| | \Fcal_k \right] \\ 
        &\leq \tilde{\gamma} \|T(x_{k-1, N_{k-1}}, \lambda_{k-1, N_{k-1}})\| \\
        &\quad + \tilde{\omega}\left[\tilde{\theta} \|T(x_{k-1, N_{k-1}}, \lambda_{k-1, N_{k-1}})\| + \tilde{a}\tilde{\beta}^{k}\right] \\
        &\quad + \tfrac{(\epsilon_G + \kappa_g)}{\kappa_{\sigma}} \left[\tilde{\theta} \|T(x_{k-1, N_{k-1}}, \lambda_{k-1, N_{k-1}})\| + \tilde{a}\tilde{\beta}^{k}\right]\\
        &\quad + \tilde{\gamma} \left[\tilde{\theta} \|T(x_{k, 0}, \lambda_{k-1, N_{k-1}})\| + \tilde{a}\tilde{\beta}^{k}\right]\\
        &= \tilde{a}_1 \|T(x_{k-1, N_{k-1}}, \lambda_{k-1, N_{k-1}})\| + \tilde{a}_2\tilde{\beta}^k,
    \end{align*}
    where the second inequality follows from \eqref{eq:biased_gradient_error_bound} and \cref{ass:Variance_regularity}, the third inequality follows by substituting $\kappa_G \leq \tfrac{\sqrt{\Var(\nabla F(x_{k, 0}, \xi) | \Fcal_k)}}{\kappa_{\sigma}}$ and \cref{cond:eq_norm_test} and the last equality substitutes the defined constants with $x_{k, 0} = x_{k-1, N_{k-1}}$. 
    Taking the total expectation of the above bound yields,
    \begin{align*}
        \Embb\left[\|T(x_{k, N_k}, \lambda_{k, N_k})\| \right] &\leq \tilde{a}_1 \Embb[\|T(x_{k-1, N_{k-1}}, \lambda_{k-1, N_{k-1}})\|] + \tilde{a}_2\tilde{\beta}^k,
    \end{align*}
    where applying \cref{lem:linear_convergence_induction} with $Z_k = \Embb\left[\|T(x_{k, N_k}, \lambda_{k, N_k})\| | \Fcal_k \right]$, $\rho_1 = \tilde{a}_1 $, $\rho_2 = \tilde{\beta}$ and $b = \tilde{a}_2$ completes the proof.
\end{proof}

\cref{th:EQ_outer_iter_complexity} establishes linear convergence across outer iterations for \cref{alg:Equality_Constrained_RA} when the batch sizes are chosen to satisfy \cref{cond:eq_norm_test}. 
While the additional tolerance term $\epsilon_k$ in termination criterion \eqref{eq:termination_criterion_eq} is controlled by the variance for the expectation problem \eqref{eq:intro_stoch_error_obj}, an extra relaxation term $\hat{\omega} \beta^k$ is introduced in $\epsilon_k$ for the finite-sum problem \eqref{eq:intro_deter_error_obj}. In the finite-sum setting, \cref{cond:eq_norm_test} evaluates the gradient error only over the selected sample set $S_k$, which may yield a low error at the current iterate but fail to represent the full problem, potentially leading to over-solving the subsampled problem. To avoid this, a geometrically decreasing error term is added to $\epsilon_k$. In contrast, for the expectation problem, \cref{cond:eq_norm_test} considers the expected error over the entire probability space $\Pcal$, yielding a more representative measure of gradient error and eliminating the need for such additional error terms.

The conditions proposed in \cref{th:EQ_outer_iter_complexity} show an interesting trade-off between the parameters of \cref{cond:eq_norm_test} and termination criterion \eqref{eq:termination_criterion_eq}. 
If the subsampled problems are solved to high accuracy by setting small values of $\gamma$, $\omega$ and $\hat{\omega}$ (or $\tilde{\gamma}$ and $\tilde{\omega}$), a larger $\theta$ (or $\tilde{\theta}$) is permitted.
This suggests that achieving high accuracy in solving subproblems reduces the necessity for an aggressive increase in batch size, as the high accuracy inherently introduces significant bias in the solutions.
Conversely, if $\gamma$, $\omega$ and $\hat{\omega}$  (or $\tilde{\gamma}$ and $\tilde{\omega}$) are set to larger values, $\theta$ (or $\tilde{\theta}$) needs to be decreased for an aggressive increase in batch sizes to achieve progress every outer iteration.
Setting $\theta$ (or $\tilde{\theta}$) to zero effectively enforces a geometric increase in batch size under \cref{cond:eq_norm_test}. This eliminates the need for the parameter conditions in \cref{th:EQ_outer_iter_complexity} and the variance lower bound in \cref{ass:Variance_regularity}. The results for this simplified case are presented in the following corollary.   

\begin{corollary} \label{cor:Eq_geometric_batch_increase_outer_iter_complexity}
    Suppose the conditions of \cref{th:EQ_outer_iter_complexity} hold.
    \begin{enumerate}
        \item For the finite-sum problem \eqref{eq:intro_deter_error_obj}: For all $k \geq 0$, if the batch size is chosen as $|S_{k}| =\left\lceil (1 - \beta^k)|\Scal| \right\rceil$ with $\beta \in (0, 1)$ and the termination criterion parameters are chosen as $0 \leq \gamma_k \leq \gamma < 1$ and $\epsilon_k = \omega \left(1 - \tfrac{|S_k|}{|\Scal|}\right)$ with $\omega \geq 0$, then, the KKT error converges at a linear rate across outer iterations as expressed in \cref{th:EQ_outer_iter_complexity} with $a_1 = \gamma$ and $a_2 = \omega + 2(1 + \gamma)(\omega_1 \kappa_g + \omega_2)$.
        \item For the expectation problem \eqref{eq:intro_stoch_error_obj}: For all $k \geq 0$, if \cref{ass:Variance_regularity} 
        is satisfied, the batch size is chosen as $|S_{k+1}| = \left\lceil\tfrac{|S_k|}{\tilde{\beta}^2}\right\rceil$ with $\tilde{\beta} \in (0, 1)$ and the termination criterion parameters are chosen as $0 \leq \{\gamma_k\} \leq \tilde{\gamma} < 1$ and $\epsilon_k = \tfrac{\tilde{\omega}}{\sqrt{|S_k|}}$ where $\tilde{\omega} \geq 0$, then, the expected KKT error converges at a linear rate across outer iterations as expressed in \cref{th:EQ_outer_iter_complexity} with $\tilde{a}_1 = \tilde{\gamma}$ and $\tilde{a}_2 = \tfrac{\tilde{\omega} +\kappa_G(\epsilon_G + \tilde{\kappa}_g) + \tilde{\gamma}(\tilde{\omega}_1 \tilde{\kappa}_g + \tilde{\omega}_2)} {\sqrt{|S_0|}}$, where $|S_0|$ is the initial user-specified batch size.
    \end{enumerate}
\end{corollary}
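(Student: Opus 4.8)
The plan is to reduce both cases to a single scalar recursion of the form $Z_k \le \rho_1 Z_{k-1} + b\,\rho_2^{\,k}$ and then invoke \cref{lem:linear_convergence_induction} exactly as in the proof of \cref{th:EQ_outer_iter_complexity}; the only real work is to check that the two explicit batch-size choices reproduce the claimed constants $a_1,a_2$ (resp. $\tilde a_1,\tilde a_2$). The conceptual shortcut is that with $\theta=0$ (resp. $\tilde\theta=0$) the right-hand sides of \cref{cond:eq_norm_test} decouple from the KKT error, so rather than route through \cref{cond:eq_norm_test} I would bound the sampled-gradient errors directly from the prescribed $|S_k|$ using \eqref{eq:deter_sampled_gradient_error}/\eqref{eq:stoch_sampled_gradient_error}, starting from the recursions \eqref{eq:Eq_error_recursion_1} and \eqref{eq:Eq_error_recursion_1_expectation} of \cref{lem:main_error_bound}.

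For the finite-sum problem, first I would record that $|S_k|=\lceil(1-\beta^k)|\Scal|\rceil\ge(1-\beta^k)|\Scal|$ gives $1-|S_k|/|\Scal|\le\beta^k$, so $\epsilon_k=\omega(1-|S_k|/|\Scal|)\le\omega\beta^k$. Applying \eqref{eq:deter_sampled_gradient_error}, the elementary inequality $\sqrt{a^2+b^2}\le a+b$, and the pointwise bound $\|\nabla f\|\le\kappa_g$ from \cref{ass:Bounded_outer_gradient} at both $x_{k,0}$ and $x_{k,N_k}$ yields $\|\nabla f(\cdot)-g_{S_k}(\cdot)\|\le 2\beta^k(\omega_1\kappa_g+\omega_2)$. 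Substituting these three bounds and $\gamma_k\le\gamma$ into \eqref{eq:Eq_error_recursion_1} and collecting the $\beta^k$ terms gives $\|T(x_{k,N_k},\lambda_{k,N_k})\|\le \gamma\,\|T(x_{k-1,N_{k-1}},\lambda_{k-1,N_{k-1}})\| + [\omega+2(1+\gamma)(\omega_1\kappa_g+\omega_2)]\beta^k$, i.e. exactly $a_1=\gamma$ and $a_2=\omega+2(1+\gamma)(\omega_1\kappa_g+\omega_2)$, and \cref{lem:linear_convergence_induction} closes the argument.

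For the expectation problem I would first show by induction on $|S_{k+1}|=\lceil|S_k|/\tilde\beta^2\rceil$ that $|S_k|\ge|S_0|\tilde\beta^{-2k}$, hence $1/\sqrt{|S_k|}\le\tilde\beta^k/\sqrt{|S_0|}$ and $\epsilon_k=\tilde\omega/\sqrt{|S_k|}\le\tilde\omega\tilde\beta^k/\sqrt{|S_0|}$. In \eqref{eq:Eq_error_recursion_1_expectation} the $x_{k,N_k}$ error (which depends on $S_k$) is handled through \eqref{eq:biased_gradient_error_bound}: since $S_k$ is independent of $\Fcal_k$ and $G_{S_k}$ is a function of $S_k$ alone, $\Embb[G_{S_k}^2\mid\Fcal_k]=\Embb[G_{S_k}^2]\le\kappa_G^2/|S_k|$ by the CLT-scaling part of \cref{ass:Variance_regularity}, which with \cref{ass:Bounded_outer_gradient} and Jensen gives $\Embb[\|\nabla f(x_{k,N_k})-g_{S_k}(x_{k,N_k})\|\mid\Fcal_k]\le\kappa_G(\epsilon_G+\tilde\kappa_g)/\sqrt{|S_k|}$. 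The $x_{k,0}$ error (independent of $S_k$) is bounded by \eqref{eq:stoch_sampled_gradient_error} and Jensen as $(\tilde\omega_1\|\nabla f(x_{k,0})\|+\tilde\omega_2)/\sqrt{|S_k|}$. Collecting these inside the conditional expectation and then taking total expectation, I would bound the one remaining random gradient norm via the tower property and Jensen, $\Embb[\|\nabla f(x_{k,0})\|]=\Embb[\|\nabla f(x_{k-1,N_{k-1}})\|]\le\sqrt{\Embb[\|\nabla f(x_{k-1,N_{k-1}})\|^2]}\le\tilde\kappa_g$, producing $\Embb[\|T(x_{k,N_k},\lambda_{k,N_k})\|]\le\tilde\gamma\,\Embb[\|T(x_{k-1,N_{k-1}},\lambda_{k-1,N_{k-1}})\|] + \tilde a_2\tilde\beta^k$ with the stated $\tilde a_1=\tilde\gamma$ and $\tilde a_2$, after which \cref{lem:linear_convergence_induction} again finishes the proof.

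The main obstacle is precisely this last gradient-norm bookkeeping in the expectation case. Unlike \cref{th:EQ_outer_iter_complexity}, which used the variance lower bound of \cref{ass:Variance_regularity} to recast $\epsilon_k$ and feed \cref{cond:eq_norm_test}, here $\tilde\theta=0$ means no such recasting is available and $\|\nabla f(x_{k,0})\|$ cannot be bounded pointwise, since \cref{ass:Bounded_outer_gradient} controls only its conditional second moment. The resolution is to keep this random norm inside the conditional expectation and defer its control to the total-expectation step, so that only the CLT-scaling part of \cref{ass:Variance_regularity} is needed and the variance lower bound can indeed be dropped. A secondary point to get right is that the clean relaxation $\sqrt{a^2+b^2}\le a+b$, rather than retaining the tighter $\sqrt{\omega_1^2\kappa_g^2+\omega_2^2}$, is what reproduces the stated constants verbatim.
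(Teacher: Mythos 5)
Your proposal is correct and follows essentially the same route as the paper's own proof: both substitute the prescribed batch-size bounds ($1-|S_k|/|\Scal|\le\beta^k$, resp. $|S_k|\ge|S_0|\tilde\beta^{-2k}$) directly into the recursions of \cref{lem:main_error_bound}, bound the biased error via \eqref{eq:biased_gradient_error_bound} with CLT scaling and the unbiased error via \eqref{eq:stoch_sampled_gradient_error}, keep $\|\nabla f(x_{k,0})\|$ inside the expectation until the total-expectation/tower step, and close with \cref{lem:linear_convergence_induction}. The constants and the handling of the expectation case (no variance lower bound needed, only CLT scaling) match the paper exactly.
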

\begin{proof}
    The proof follows the same procedure as \cref{th:EQ_outer_iter_complexity}, but with more pessimistic bounds on the gradient errors, using the explicitly specified form for the batch size.
    For the finite-sum problem \eqref{eq:intro_deter_error_obj}, under the chosen parameters, the error bound \eqref{eq:Eq_error_recursion_1} from \cref{lem:main_error_bound} can be simplified as,
    \begin{align*}
        \|T(x_{k, N_k}, \lambda_{k, N_k})\|
        &\leq \gamma \|T(x_{k-1, N_{k-1}}, \lambda_{k-1, N_{k-1}})\| + \omega \left(1  - \tfrac{|S_k|}{|\Scal|}\right) \\
        &\quad + \left\|\nabla f(x_{k, N_k}) - g_{S_k}(x_{k, N_k})\right\| + \gamma \|\nabla f(x_{k, 0}) - g_{S_k}(x_{k, 0})\| \\
        &\leq \gamma \|T(x_{k-1, N_{k-1}}, \lambda_{k-1, N_{k-1}})\| + \omega \left(\tfrac{|\Scal| - (1 - \beta^k)|\Scal|}{|\Scal|}\right) \\
        &\quad + 2\left(\tfrac{|\Scal| - (1 - \beta^k)|\Scal|}{|\Scal|}\right)\left[\omega_1 (\|\nabla f(x_{k, N_k})\| + \gamma  \|\nabla f(x_{k, 0})\|) + \omega_2 (1 + \gamma)\right] \\ 
        &\leq \gamma \|T(x_{k-1, N_{k-1}}, \lambda_{k-1, N_{k-1}})\| + (\omega + 2(1 + \gamma)(\omega_1 \kappa_g + \omega_2)) \beta^k \\
        &=  \gamma \|T(x_{k-1, N_{k-1}}, \lambda_{k-1, N_{k-1}})\| + a_2\beta^k  ,
    \end{align*}
    where the second inequality follows from \eqref{eq:deter_sampled_gradient_error} and the chosen batch size sequence, the third inequality follows from \cref{ass:Bounded_outer_gradient} and the final equality substitutes the defined constants. Using the above bound, applying \cref{lem:linear_convergence_induction} with $Z_k = \|T(x_{k, N_k}, \lambda_{k, N_k})\|$, $\rho_1 = \gamma$, $\rho_2 = \beta$ and $b = a_2$ completes the proof.
    
    For the expectation problem \eqref{eq:intro_stoch_error_obj}, under the stated conditions, the error bound \eqref{eq:Eq_error_recursion_1_expectation} from \cref{lem:main_error_bound} can be further simplified as,
    \begin{align*}
        \Embb\left[\|T(x_{k, N_k}, \lambda_{k, N_k})\| | \Fcal_k \right]
        &\leq \tilde{\gamma} \|T(x_{k-1, N_{k-1}}, \lambda_{k-1, N_{k-1}})\| + \tfrac{\tilde{\omega}}{\sqrt{|S_k|}} \\
        &\quad + \Embb\left[\left\|\nabla f(x_{k, N_k}) - g_{S_k}(x_{k, N_k})\right\| | \Fcal_k \right] \\
        &\quad + \tilde{\gamma} \Embb\left[\|\nabla f(x_{k, 0}) - g_{S_k}(x_{k, 0})\| | \Fcal_k \right]\\ 
        &\leq \tilde{\gamma} \|T(x_{k-1, N_{k-1}}, \lambda_{k-1, N_{k-1}})\| + \tfrac{\tilde{\omega}}{\sqrt{|S_k|}}  \\
        &\quad + \kappa_G\tfrac{(\epsilon_G + \tilde{\kappa}_g)}{\sqrt{|S_k|}} + \tilde{\gamma} \tfrac{\tilde{\omega}_1 \|\nabla f(x_{k, 0})\| + \tilde{\omega}_2}{\sqrt{|S_k|}} \\
        &\leq \tilde{\gamma}\|T(x_{k-1, N_{k-1}}, \lambda_{k-1, N_{k-1}})\| \\
        &\quad + \tfrac{\tilde{\beta}^k}{\sqrt{|S_0|}}\left[\tilde{\omega} + \kappa_G(\epsilon_G + \tilde{\kappa}_g) + \tilde{\gamma} (\tilde{\omega}_1 \|\nabla f(x_{k, 0})\| + \tilde{\omega}_2)\right]
    \end{align*}
    where the second inequality follows from \eqref{eq:stoch_sampled_gradient_error} and \eqref{eq:biased_gradient_error_bound} for the gradient error with bias and \cref{ass:gradient_errors} for the unbiased gradient error, and the third inequality follows from the chosen batch size sequence. 
    Taking total expectation of the above bound, with $\Embb[\|\nabla f(x_{k, 0})\|] = \Embb\left[\Embb\left[\|\nabla f(x_{k-1, N_{k-1}})\| | \Fcal_{k-1}\right]\right] \leq \tilde{\kappa}_g$ by \cref{ass:Bounded_outer_gradient} and substituting the defined constants,
    \begin{align*}
    \Embb\left[\|T(x_{k, N_k}, \lambda_{k, N_k})\| \right]
        &\leq \tilde{\gamma} \Embb[\|T(x_{k-1, N_{k-1}}, \lambda_{k-1, N_{k-1}})\|] + \tilde{a}_2 \tilde{\beta}^k,
    \end{align*}
    where applying \cref{lem:linear_convergence_induction} with $Z_k = \Embb\left[\|T(x_{k, N_k}, \lambda_{k, N_k})\| | \Fcal_k \right]$, $\rho_1 = \tilde{\gamma} $, $\rho_2 = \tilde{\beta}$ and $b = \tilde{a}_2$ completes the proof.
\end{proof}
\cref{cor:Eq_geometric_batch_increase_outer_iter_complexity} establishes linear convergence for \cref{alg:Equality_Constrained_RA} under a pre-specified geometrically increasing batch size sequence, a simplified alternative to \cref{cond:eq_norm_test}. 
Since the batch sizes are deterministically chosen, the tolerance sequence ${\epsilon_k}$ is controlled via the batch size, and the conditions and results in \cref{th:EQ_outer_iter_complexity} are simplified, particularly for the finite-sum problem \eqref{eq:intro_deter_error_obj}.
Alternatively, one can select sufficiently large batch sizes when $\theta$ (or $\tilde{\theta}$) is non-zero to ensure that \cref{cond:eq_norm_test} holds. 
These batch sizes are conservative and derived under a stronger bound on the true problem objective function gradient than \cref{ass:Bounded_outer_gradient} for the expectation problem \eqref{eq:intro_stoch_error_obj}. 
Since a deterministic bound on the true problem gradient is a standard assumption in both deterministic \cite{powell2006fast,nocedal2006numerical,byrd2008inexact} and stochastic \cite{berahas2021sequential,curtis2024worst,o2024two} constrained optimization as previously discussed, the assumption is reasonable.

\begin{assumption} \label{ass:Bounded_outer_gradient_strong}
    Let $\{x_{k, N_k}\}$ be the sequence of outer iterates generated by any run of \cref{alg:Equality_Constrained_RA}. There exists $\kappa_g \geq 0$ such that $\|\nabla f(x_{k, N_k})\| \leq \kappa_g$ for all $k \geq 0$.
\end{assumption}

The next lemma provides sufficiently large batch sizes that ensure \cref{cond:eq_norm_test} holds.

\begin{lemma} \label{lem:eq_large_enough_batch_sizes}
    Suppose Assumptions \ref{ass:EQ_base assumption}, \ref{ass:well_posed}, \ref{ass:gradient_errors} and \ref{ass:Bounded_outer_gradient_strong} hold in \cref{alg:Equality_Constrained_RA}.
    \begin{enumerate}
        \item For the finite-sum problem \eqref{eq:intro_deter_error_obj}: For $k \geq 0$, \cref{cond:eq_norm_test} is satisfied if 
        \begin{equation*}
            |S_k| \geq |\Scal|\left(1 - \sqrt{\tfrac{\theta^2 \|T(x_{k, 0}, \lambda_{k-1, N_{k-1}})\|^2 + a^2 \beta^{2k}}{4(\omega_1^2 \kappa_g^2 + \omega_2^2)}}\right) \quad \text{with} \quad \hat{\theta} = 1.
        \end{equation*}
        \item For the expectation problem \eqref{eq:intro_stoch_error_obj}: For $k \geq 0$, \cref{cond:eq_norm_test} is satisfied if
        \begin{equation*}
            |S_k| \geq  \tfrac{\tilde{\omega}_1^2 \kappa_g^2 + \tilde{\omega}_2^2}{\tilde{\theta}^2 \|T(x_{k, 0}, \lambda_{k-1, N_{k-1}})\|^2 + \tilde{a}^2 \tilde{\beta}^{2k}}.
        \end{equation*}
    \end{enumerate}
\end{lemma}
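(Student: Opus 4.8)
The plan is to verify each inequality in \cref{cond:eq_norm_test} directly by inserting the a priori gradient-error bounds \eqref{eq:deter_sampled_gradient_error} and \eqref{eq:stoch_sampled_gradient_error}, replacing the true gradient norm by $\kappa_g$ through \cref{ass:Bounded_outer_gradient_strong}, and then solving the resulting inequality for $|S_k|$. No induction or recursion across outer iterations is required; the entire argument is a pointwise estimate at the two relevant iterates $x_{k,0}$ and $x_{k,N_k}$ followed by an elementary rearrangement.

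For the finite-sum problem, I would first note that both $x_{k,0} = x_{k-1,N_{k-1}}$ and $x_{k,N_k}$ belong to the sequence of outer iterates, so \cref{ass:Bounded_outer_gradient_strong} applies at each, giving $\|\nabla f(x_{k,0})\| \le \kappa_g$ and $\|\nabla f(x_{k,N_k})\| \le \kappa_g$. Evaluating \eqref{eq:deter_sampled_gradient_error} at each point and discarding the gradient dependence in favour of $\kappa_g$ produces the common upper bound $4(1 - |S_k|/|\Scal|)^2(\omega_1^2 \kappa_g^2 + \omega_2^2)$ for both $\|\nabla f(x_{k,0}) - g_{S_k}(x_{k,0})\|^2$ and $\|\nabla f(x_{k,N_k}) - g_{S_k}(x_{k,N_k})\|^2$. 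Requiring this common bound to be at most $\theta^2 \|T(x_{k,0}, \lambda_{k-1,N_{k-1}})\|^2 + a^2 \beta^{2k}$ and solving the resulting quadratic inequality in $1 - |S_k|/|\Scal| \ge 0$ (taking the nonnegative square root) gives exactly the stated threshold. The choice $\hat{\theta}=1$ is precisely what makes a single threshold cover both inequalities of \cref{cond:eq_norm_test} at once: because both iterates obey the same deterministic bound, the second (end-of-inner-loop) inequality collapses onto the first once $\hat{\theta}=1$, which is the uniform-over-iterates mechanism described in the finite-sum item of \cref{remark:norm_test_conditions}.

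For the expectation problem, I would condition on $\Fcal_k$, under which $x_{k,0}$ is measurable while $S_k$ is an independent i.i.d. sample, so \eqref{eq:stoch_sampled_gradient_error} is applicable and yields $\Embb[\|\nabla f(x_{k,0}) - g_{S_k}(x_{k,0})\|^2 | \Fcal_k] \le (\tilde{\omega}_1^2 \|\nabla f(x_{k,0})\|^2 + \tilde{\omega}_2^2)/|S_k|$. Bounding $\|\nabla f(x_{k,0})\| \le \kappa_g$ via \cref{ass:Bounded_outer_gradient_strong}, requiring the right-hand side to be at most $\tilde{\theta}^2 \|T(x_{k,0}, \lambda_{k-1,N_{k-1}})\|^2 + \tilde{a}^2 \tilde{\beta}^{2k}$, and solving for $|S_k|$ gives the stated threshold.

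The main obstacle is not analytical but a matter of careful bookkeeping: ensuring the gradient-norm bound is legitimately available at the relevant iterates. In the finite-sum case this means recognizing $x_{k,N_k}$ (and not merely $x_{k,0}$) as an outer iterate so that \cref{ass:Bounded_outer_gradient_strong} applies there as well, which is what licenses the uniform bound used together with $\hat{\theta}=1$. In the expectation case it is the measurability/independence structure ($x_{k,0}$ being $\Fcal_k$-measurable and $S_k \perp \Fcal_k$) that justifies applying \eqref{eq:stoch_sampled_gradient_error} with conditioning on $\Fcal_k$. Everything downstream is routine algebraic rearrangement.
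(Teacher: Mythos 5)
Your proposal is correct and follows essentially the same route as the paper's proof: bound the subsampled gradient error via \eqref{eq:deter_sampled_gradient_error} (resp. \eqref{eq:stoch_sampled_gradient_error} conditioned on $\Fcal_k$), replace $\|\nabla f(\cdot)\|$ by $\kappa_g$ using \cref{ass:Bounded_outer_gradient_strong}, substitute the stated lower bound on $|S_k|$, and note that with $\hat{\theta}=1$ the same deterministic bound at $x_{k,N_k}$ handles the second inequality of \cref{cond:eq_norm_test}. Your added remarks on the $\Fcal_k$-measurability of $x_{k,0}$ and independence of $S_k$ only make explicit what the paper uses implicitly.
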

\begin{proof}
    For the finite-sum problem \eqref{eq:intro_deter_error_obj},
    \begin{align*}
        \|\nabla f(x_{k, 0}) - g_{S_k} (x_{k, 0})\|^2 &\leq 4\left(1 - \tfrac{|S_k|}{|\Scal|}\right)^2 (\omega_1^2 \|\nabla f (x_{k, 0})\|^2 + \omega_2^2) \\
        &\leq 4 \left(\sqrt{\tfrac{\theta^2 \|T(x_{k, 0}, \lambda_{k-1, N_{k-1}})\|^2 + \tilde{a}^2 \tilde{\beta}^{2k}}{4(\omega_1^2 \kappa_g^2 + \omega_2^2)}}\right)^2 (\omega_1^2 \kappa_g^2 + \omega_2^2),
    \end{align*}
    where the first inequality follows from \eqref{eq:deter_sampled_gradient_error} and the second inequality follows from \cref{ass:Bounded_outer_gradient_strong} and the lower bound on $|S_k|$, thereby satisfying \cref{cond:eq_norm_test}. 
    The gradient error condition on $x_{k, N_k}$ with $\hat{\theta} = 1$ can be verified in the same way. For the expectation problem \eqref{eq:intro_stoch_error_obj},
    \begin{align*}
        \Embb[\|\nabla f(x_{k, 0}) - g_{S_k} (x_{k, 0})\|^2 | \Fcal_k] &\leq \tfrac{\tilde{\omega}_1^2 \|\nabla f(x_{k, 0})\|^2 + \tilde{\omega}_2^2}{|S_k|}\\ 
        &\leq (\tilde{\omega}_1^2 \kappa_g^2 + \tilde{\omega}_2^2)\tfrac{\tilde{\theta}^2 \|T(x_{k, 0}, \lambda_{k-1, N_{k-1}})\|^2 + a^2 \beta^{2k}}{\tilde{\omega}_1^2 \kappa_g^2 + \tilde{\omega}_2^2},
    \end{align*}
    where the first inequality follows from \eqref{eq:stoch_sampled_gradient_error} and the second inequality follows from \cref{ass:Bounded_outer_gradient_strong} and the suggested lower bound on $|S_k|$, thereby satisfying \cref{cond:eq_norm_test}.
\end{proof}

We have analyzed the iterates generated by \cref{alg:Equality_Constrained_RA} under \cref{cond:eq_norm_test} for sampling across outer iterations. \cref{th:EQ_outer_iter_complexity} established a linear convergence rate across outer iterations, implying that an $\epsilon$-accurate solution can be achieved in $K_{\epsilon}$ outer iterations, where $K_\epsilon = \Ocal\left(\log \tfrac{1}{\epsilon}\right)$. While this result provides an upper bound on the number of outer iterations, the dominant computational cost in \cref{alg:Equality_Constrained_RA} arises from the inner loop—specifically, from solving deterministic subproblems (e.g., the \texttt{SQP} subproblem \eqref{eq:SQP_quad_model}) and computing the subsampled gradients.

We now analyze the total work complexity of \cref{alg:Equality_Constrained_RA} to achieve an $\epsilon$-accurate solution in terms of two metrics: $(1)$ inner iteration complexity, which is the total number of inner iterations performed $\left(\sum_{k=0}^{K_\epsilon} N_k\right)$, and $(2)$ gradient complexity, which is the total number of subsampled gradients computed $\left(\sum_{k=0}^{K_\epsilon} |S_k| N_k\right)$, assuming one gradient computation per inner iteration. 
To derive these results, we need to characterize the performance of the deterministic solver used in the inner loop beyond \cref{ass:well_posed}. We focus on a specific sublinear rate of convergence for the deterministic solver, most commonly achieved for deterministic problems with a nonconvex objective function and nonconvex equality constraints \cite{berahas2022adaptive,curtis2024worst}.

\begin{assumption} \label{ass:sublinear_solver}
The inner loop in \cref{alg:Equality_Constrained_RA} exhibits sublinear convergence, i.e., for all $k \geq 0$ an $\epsilon$-accurate solution to the subsampled problem, $\|T_{S_k}(x_{k, N_k}, \lambda_{k, N_k})\| \leq \epsilon$, is achieved in $N_k = \Ocal(\epsilon^{-2})$ iterations.
\end{assumption}

Given \cref{ass:sublinear_solver}, we analyze the total work complexity of \cref{alg:Equality_Constrained_RA}.

\begin{theorem} \label{th:EQ_work_complexity} 
    Suppose Assumptions \ref{ass:EQ_base assumption}, \ref{ass:gradient_errors}, \ref{ass:Bounded_outer_gradient_strong} and \ref{ass:sublinear_solver} hold and the batch size sequence $\{|S_k|\}$ is chosen to satisfy \cref{cond:eq_norm_test} in \cref{alg:Equality_Constrained_RA}.
    \begin{enumerate}
        \item For the finite-sum problem \eqref{eq:intro_deter_error_obj}: For all $k \geq 0$, if termination criterion parameters are chosen as $0 \leq \gamma_k \leq \gamma < 1$ and $\epsilon_k = \omega \|\nabla f(x_{k, 0}) - g_{S_k}(x_{k, 0})\| + \hat{\omega}\beta^k$ with $\omega, \hat{\omega} > 0$, and \cref{cond:eq_norm_test} parameters are chosen such that $\left[  \gamma + \theta(\omega + \hat{\theta} + \gamma)\right]  < \beta$ 
        , one achieves an $\epsilon > 0$ accurate solution in $\Ocal(\epsilon^{-2})$ inner iterations and $\Ocal(|\Scal|\epsilon^{-2})$ gradient evaluations.

        \item For the expectation problem \eqref{eq:intro_stoch_error_obj}: For all $k \geq 0$, if \cref{ass:Variance_regularity} holds, the termination criterion parameters are chosen as $0 \leq \gamma_k \leq \tilde{\gamma} < 1$, $\epsilon_k = \tilde{\omega}\sqrt{\tfrac{\Var(\nabla F(x_{k, 0}, \xi) | \Fcal_k)}{|S_k|}}$ where $\tilde{\omega} > 0$, and \cref{cond:eq_norm_test} parameters are chosen such that $\left[ \tilde{\gamma}  + \tilde{\theta} \left(\tilde{\omega} + \tfrac{(\epsilon_G + \kappa_g)}{\kappa_{\sigma}} + \tilde{\gamma}\right)\right] < \tilde{\beta}$ and $\tilde{a} = \sqrt{\tfrac{\tilde{\omega}_1^2 \kappa_g^2 + \tilde{\omega}_2^2}{|S_{0}|}}$, where $|S_0|$ is the initial batch size, one achieves an $\epsilon > 0$ accurate solution in expectation in $\Ocal(\epsilon^{-2})$ inner iterations and $\Ocal(\epsilon^{-4})$ gradient evaluations.
    \end{enumerate}
\end{theorem}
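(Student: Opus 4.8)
The plan is to convert the linear convergence of the outer KKT error (\cref{th:EQ_outer_iter_complexity}) into a work bound by counting inner iterations per outer iteration via \cref{ass:sublinear_solver}, and then summing the resulting geometric series. The first observation is that the sharpened parameter conditions imposed here---$[\gamma+\theta(\omega+\hat\theta+\gamma)]<\beta$ for \eqref{eq:intro_deter_error_obj} and $[\tilde\gamma+\tilde\theta(\tilde\omega+(\epsilon_G+\kappa_g)/\kappa_{\sigma}+\tilde\gamma)]<\tilde\beta$ for \eqref{eq:intro_stoch_error_obj}, rather than merely $<1$---force $a_1<\beta$ (resp.\ $\tilde a_1<\tilde\beta$), so that choosing $\nu$ (resp.\ $\tilde\nu$) small enough makes the contraction factor $\max\{a_1+\nu,\beta\}$ of \cref{th:EQ_outer_iter_complexity} equal to $\beta$ (resp.\ $\tilde\beta$). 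Hence $\|T(x_{k,N_k},\lambda_{k,N_k})\|=\Ocal(\beta^k)$ and $\Embb[\|T(x_{k,N_k},\lambda_{k,N_k})\|]=\Ocal(\tilde\beta^k)$, so an $\epsilon$-accurate iterate is reached after $K_\epsilon=\Ocal(\log(1/\epsilon))$ outer iterations; taking $K_\epsilon$ to be the \emph{first} index attaining accuracy $\epsilon$ also yields the matching bound $\beta^{-K_\epsilon}=\Ocal(\epsilon^{-1})$ (resp.\ $\tilde\beta^{-K_\epsilon}=\Ocal(\epsilon^{-1})$), which is what the summation below needs.

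Next I would bound the per-outer-iteration inner cost. The inner loop stops once the subsampled KKT error falls below the termination target $\tau_k:=\gamma_k\|T_{S_k}(x_{k,0},\lambda_{k,0})\|+\epsilon_k$ of \eqref{eq:termination_criterion_eq}, so \cref{ass:sublinear_solver} gives $N_k=\Ocal(\tau_k^{-2})$; the whole argument therefore rests on a geometric \emph{lower} bound $\tau_k\geq c\,\beta^k$ (resp.\ $c\,\tilde\beta^k$). For the finite-sum problem this is immediate from the construction $\epsilon_k=\omega\|\nabla f(x_{k,0})-g_{S_k}(x_{k,0})\|+\hat\omega\beta^k\geq\hat\omega\beta^k$, yielding $N_k=\Ocal(\beta^{-2k})$. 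For the expectation problem the lower bound is subtler: taking $|S_k|$ to be (within a constant factor of) the sufficient batch size of \cref{lem:eq_large_enough_batch_sizes} and discarding the nonnegative $\tilde\theta^2\|T(x_{k,0},\lambda_{k-1,N_{k-1}})\|^2$ term in its denominator gives, with the prescribed $\tilde a=\sqrt{(\tilde\omega_1^2\kappa_g^2+\tilde\omega_2^2)/|S_0|}$, the deterministic upper bound $|S_k|=\Ocal(\tilde\beta^{-2k})$; combining this with the variance lower bound $\Var(\nabla F(x_{k,0},\xi)\mid\Fcal_k)\geq\kappa_G^2\kappa_{\sigma}^2$ of \cref{ass:Variance_regularity} inside $\epsilon_k=\tilde\omega\sqrt{\Var(\nabla F(x_{k,0},\xi)\mid\Fcal_k)/|S_k|}$ gives $\tau_k\geq\epsilon_k\geq c\,\tilde\beta^k$, hence again $N_k=\Ocal(\tilde\beta^{-2k})$.

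The crucial final step is the summation. Since $\beta,\tilde\beta\in(0,1)$, the terms $\beta^{-2k}$ grow geometrically, so $\sum_{k=0}^{K_\epsilon}\beta^{-2k}$ is dominated (up to the constant $1/(1-\beta^2)$) by its last term $\beta^{-2K_\epsilon}$---this is exactly what prevents an extra $\log(1/\epsilon)$ factor. Thus the inner-iteration complexity is $\sum_k N_k=\Ocal(\beta^{-2K_\epsilon})=\Ocal(\epsilon^{-2})$ in both regimes. For the gradient complexity $\sum_k|S_k|N_k$, the finite-sum case is immediate from $|S_k|\leq|\Scal|$, giving $\Ocal(|\Scal|\epsilon^{-2})$; for the expectation case I would multiply the two per-iteration bounds, $|S_k|N_k=\Ocal(\tilde\beta^{-2k})\cdot\Ocal(\tilde\beta^{-2k})=\Ocal(\tilde\beta^{-4k})$, and sum the geometric series to obtain $\Ocal(\tilde\beta^{-4K_\epsilon})=\Ocal(\epsilon^{-4})$.

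I expect the main obstacle to be establishing the geometric lower bound on $\tau_k$ for the expectation problem, because it couples three adaptively interlinked quantities---the termination target $\tau_k$, the batch size $|S_k|$, and the gradient variance. The variance lower bound of \cref{ass:Variance_regularity} is precisely the device that keeps $\epsilon_k$ from decaying faster than $|S_k|$ grows, and the near-tightness of \cref{lem:eq_large_enough_batch_sizes} (choosing a minimal admissible batch size) is what supplies the deterministic upper bound on $|S_k|$; without both, $\tau_k$ could collapse and $N_k$ blow up. A secondary point requiring care is the role of expectation: since the per-iteration bounds $N_k=\Ocal(\tilde\beta^{-2k})$ and $|S_k|=\Ocal(\tilde\beta^{-2k})$ hold deterministically over every realization, the work sums are bounded path-by-path, and expectation enters only through \cref{th:EQ_outer_iter_complexity} in fixing the deterministic stopping index $K_\epsilon$ at which $\Embb[\|T(x_{k,N_k},\lambda_{k,N_k})\|]\leq\epsilon$.
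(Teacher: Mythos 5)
Your proposal is correct and follows essentially the same route as the paper's proof: force the outer contraction factor of \cref{th:EQ_outer_iter_complexity} to equal $\beta$ (resp.\ $\tilde\beta$) so that $\beta^{-K_\epsilon}=\Ocal(\epsilon^{-1})$, lower-bound the inner-loop termination target by a multiple of $\beta^k$ (for the expectation problem via the variance lower bound of \cref{ass:Variance_regularity} together with the batch-size upper bound $|S_k|\leq |S_0|\tilde\beta^{-2k}$ tied to the prescribed $\tilde a$), and sum the resulting geometric series, which is dominated by its last term. The only cosmetic difference is how the batch-size upper bound is obtained: the paper selects $|S_k|$ so that \cref{cond:eq_norm_test} holds with equality and compares against the reference sequence $|\tilde S_k|=\lceil |S_0|\tilde\beta^{-2k}\rceil$ built from \cref{lem:eq_large_enough_batch_sizes}, whereas you take $|S_k|$ to be the sufficient batch size of \cref{lem:eq_large_enough_batch_sizes} directly; the two devices are interchangeable and yield the same orders.
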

\begin{proof}
    For the finite-sum problem \eqref{eq:intro_deter_error_obj}, the number of outer iteration to reach an $\epsilon$ accurate solution is $K_\epsilon = \Ocal\left(\tfrac{1}{\log (1 / \beta)} \log \left(\tfrac{1}{\epsilon}\right)\right)$ under the stated conditions from \cref{th:EQ_outer_iter_complexity}. Thus, under \cref{ass:sublinear_solver}, the total number of inner iterations can be bounded  with a constant $C > 0$ as,
    \begin{align*}
        \sum_{k = 0}^{K_\epsilon} N_k 
        \leq  \sum_{k = 0}^{K_\epsilon} \tfrac{C}{(\gamma_k \|T_{S_k}(x_{k, 0}, \lambda_{k, 0})\| + \omega \|\nabla f(x_{k, 0}) - g_{S_k}(x_{k, 0})\| + \hat{\omega}\beta^k)^2} 
        \leq \sum_{k = 0}^{K_\epsilon} \tfrac{C}{\hat{\omega}^2 \beta^{2k}}
        = \tfrac{C}{\hat{\omega}^2} \tfrac{(1 / \beta)^{2 (K_\epsilon + 1)} - 1}{(1 / \beta)^2 - 1},
    \end{align*}
    which is of the order 
    $\left(\tfrac{1}{\beta}\right)^{2 K_\epsilon} = \Ocal (\epsilon^{-2})$. 
    As the batch size is bounded by $|S_k| \leq |\Scal|$, the total number of gradient evaluations can be bounded as $\sum_{k = 0}^{K_\epsilon} |S_k|N_k  \leq |\Scal| \sum_{k = 0}^{K_\epsilon} N_k = \Ocal(|\Scal| \epsilon^{-2})$.

    For the expectation problem \eqref{eq:intro_stoch_error_obj}, the number of outer iterations to achieve an $\epsilon$ accurate solution in expectation is $K_\epsilon = \Ocal\left(\tfrac{1}{\log (1 / \tilde{\beta})} \log \left(\tfrac{1}{\epsilon}\right)\right)$ under the stated conditions from \cref{th:EQ_outer_iter_complexity}. The expected number of inner iterations in outer iteration $k \geq 0$ can be bounded with a constant $C > 0$ as,
    \begin{align*}
        \Embb[N_k | \Fcal_k ] 
        \leq \Embb \left[\left. \tfrac{C}{\left(\gamma_k \|T_{S_k}(x_{k, 0}, \lambda_{k, 0})\| + \tilde{\omega}\sqrt{\tfrac{\Var(\nabla F(x_{k, 0}, \xi) | \Fcal_k)}{|S_k|}}\right)^2} \right| \Fcal_k  \right]
        \leq \tfrac{C|S_k|}{\tilde{\omega}^2\Var(\nabla F(x_{k, 0}, \xi) | \Fcal_k)},
    \end{align*} 
    where the final bound is a deterministic quantity given $\Fcal_k$.
    Let $|S_k|$ be selected such that \cref{cond:eq_norm_test} holds with equality, i.e.,
    \begin{align*}
        |S_k| = \left\lceil\tfrac{\Var(\nabla F(x_{k, 0}, \xi) | \Fcal_k)}{\tilde{\theta}^2 \|T(x_{k, 0}, \lambda_{k-1, N_{k-1}})\|^2 + \tilde{a}^2 \tilde{\beta}^{2k}}\right\rceil \leq \tfrac{\Var(\nabla F(x_{k, 0}, \xi) | \Fcal_k)}{\tilde{\theta}^2 \|T(x_{k, 0}, \lambda_{k-1, N_{k-1}})\|^2 + \tilde{a}^2 \tilde{\beta}^{2k}} + 1.
    \end{align*}
    With this selection, the bound on the expected number of inner iterations can be further refined,
    \begin{align*}
        \Embb[N_k | \Fcal_k ] 
        \leq \tfrac{C}{\tilde{\omega}^2 (\tilde{\theta}^2 \|T(x_{k, 0}, \lambda_{k-1, N_{k-1}})\|^2 + \tilde{a}^2 \tilde{\beta}^{2k})} + \tfrac{C}{\tilde{\omega}^2\Var(\nabla F(x_{k, 0}, \xi) | \Fcal_k)}
        \leq\tfrac{C}{\tilde{\omega}^2 \tilde{a}^2 \tilde{\beta}^{2k}} + \tfrac{C}{\tilde{\omega}^2\kappa_G^2\kappa_{\sigma}^2}, 
    \end{align*} 
    where the final bound follows from \cref{ass:Variance_regularity}. Thus, the expected total number of inner iterations can be bounded as,
    \begin{align*}
        \sum_{k = 0}^{K_\epsilon} \Embb[\Embb[N_k | \Fcal_k ]] 
        \leq \sum_{k = 0}^{K_\epsilon} \tfrac{C}{ \tilde{\omega}^2\tilde{a}^2 \tilde{\beta}^{2k}} + \tfrac{C}{\tilde{\omega}^2\kappa_G^2\kappa_{\sigma}^2} 
        = \tfrac{C}{ \tilde{\omega}^2 \tilde{a}^2} \tfrac{(1 / \tilde{\beta})^{2 (K_\epsilon + 1)} - 1}{(1 / \tilde{\beta})^2 - 1} + \tfrac{C(K_{\epsilon} + 1)}{\tilde{\omega}^2\kappa_G^2\kappa_{\sigma}^2}, 
    \end{align*} 
    which is of the order $\left(\tfrac{1}{\tilde{\beta}}\right)^{2 K_\epsilon} = \Ocal (\epsilon^{-2})$. 
    The expected number of gradient evaluations in outer iteration $k \geq 0$ can be bounded as
    \begin{align*}
        \Embb[|S_k| N_k | \Fcal_k ] 
        \leq \tfrac{C |S_k| }{\tilde{\omega}^2 \tilde{a}^2 \tilde{\beta}^{2k}} + \tfrac{C |S_k| }{\tilde{\omega}^2\kappa_G^2\kappa_{\sigma}^2},
    \end{align*} 
    which is a deterministic quantity given $\Fcal_k$.
    Under the stated parameter selection, if one constructs a batch size sequence $|\tilde{S}_k| = \left\lceil\tfrac{|S_0|}{\tilde{\beta}^{2k}}\right\rceil$, where $|S_0|$ is the initial batch size, $\Embb[\|g_{\tilde{S}_k}(x_{k, 0}) - \nabla f(x_{k, 0})\|^2 | \Fcal_k] \leq \tilde{\beta}^{2k}\tfrac{\tilde{\omega}_1^2 \kappa_g^2 + \tilde{\omega}_2^2}{|S_{0}|} = \tilde{a}^2 \tilde{\beta}^{2k}$ based on \cref{lem:eq_large_enough_batch_sizes}. Thus, one can satisfy a stronger gradient error condition with $|\tilde{S}_k|$ than \cref{cond:eq_norm_test}, implying $|S_k| \leq |\tilde{S}_k|$. The expected total number of gradient evaluations can then be bounded as,
    \begin{align*}
        \sum_{k = 0}^{K_\epsilon} \Embb[\Embb[|S_k| N_k | \Fcal_k ]] 
        &\leq \sum_{k = 0}^{K_\epsilon}\tfrac{C |S_k| }{\tilde{\omega}^2 \tilde{a}^2 \tilde{\beta}^{2k}} + \tfrac{C |S_k| }{\tilde{\omega}^2\kappa_G^2\kappa_{\sigma}^2}
        \leq \sum_{k = 0}^{K_\epsilon}\tfrac{C |\tilde{S}_k| }{\tilde{\omega}^2 \tilde{a}^2 \tilde{\beta}^{2k}} + \tfrac{C |\tilde{S}_k| }{\tilde{\omega}^2\kappa_G^2\kappa_{\sigma}^2}\\
        &= \sum_{k = 0}^{K_\epsilon}\tfrac{C |S_0| }{\tilde{\omega}^2 \tilde{a}^2 \tilde{\beta}^{4k}} + \tfrac{C |S_0| }{\tilde{\omega}^2\kappa_G^2\kappa_{\sigma}^2\tilde{\beta}^{2k}} \\
        &= \tfrac{C |S_0| }{\tilde{\omega}^2 \tilde{a}^2} \tfrac{(1 / \tilde{\beta})^{4 (K_\epsilon + 1)} - 1}{(1 / \tilde{\beta})^2 - 1} + \tfrac{C |S_0| }{\tilde{\omega}^2 \kappa_G^2\kappa_{\sigma}^2} \tfrac{(1 / \tilde{\beta})^{2 (K_\epsilon + 1)} - 1}{(1 / \tilde{\beta})^2 - 1},
    \end{align*} 
    which is of the order 
    $\left(\tfrac{1}{\tilde{\beta}}\right)^{4 K_\epsilon} = \Ocal (\epsilon^{-4})$.
\end{proof}

The results of \cref{th:EQ_work_complexity} hold for any deterministic solver satisfying \cref{ass:sublinear_solver}, which assumes sublinear convergence in the inner loop. Improved complexity results are possible if faster convergence rates are achieved within the inner loop. For the finite-sum problem \eqref{eq:intro_deter_error_obj}, the derived complexity matches that of a deterministic solver applied directly to the full problem using the true gradient. 
The current analysis is somewhat pessimistic in terms of gradient evaluations in bounding $|S_k| \leq |\Scal|$. A more refined argument, similar to that for the expectation problem with $a = 2\sqrt{\omega_1^2 \kappa_g^2 + \omega_2^2}$ in \cref{cond:eq_norm_test}, yields
\begin{align*}
    \sum_{k = 0}^{K_\epsilon} N_k |S_k|
    \leq \sum_{k = 0}^{K_\epsilon} \tfrac{C}{\hat{\omega}^2 \beta^{2k}} (1 - \beta^k) |\Scal|
    &= |\Scal| \tfrac{C}{\hat{\omega}^2} \sum_{k = 0}^{K_\epsilon} \left(\tfrac{1}{\beta^{2k}} - \tfrac{1}{\beta^{k}}\right)
\end{align*}
which is of the same order, since $|\Scal|\left(\epsilon^{-2} - \epsilon^{-1}\right) = \Ocal\left(|\Scal|\epsilon^{-2}\right)$. Thus, we present the simpler analysis.
For the expectation problem, the gradient evaluation complexity of $\Ocal(\epsilon^{-4})$ aligns with the optimal complexity bounds established for stochastic SQP methods \cite{berahas2025sequential, curtis2024worst, o2024two}.
The gradient and inner iteration complexities are independent of the choice of inner loop solver under \cref{ass:sublinear_solver}, assuming one subsampled gradient computation per inner iteration. However, the actual computational cost also depends on the subproblem being solved in each inner iteration by the deterministic solver. For instance, \texttt{SQP}-based methods solve a linear system of equations for general nonlinear equality constraints, while projection-based methods compute a projection onto the feasible region. The relative cost of these subproblems depends on the subsampled problem and is an important consideration when selecting the deterministic solver.

\subsection{Retrospective Approximation SQP Framework} \label{sec:SQP_based_algorithm_eq}
In this section, we present an instance of the \texttt{RA}-based framework (\cref{alg:Equality_Constrained_RA}) that employs a deterministic \texttt{SQP} solver in the inner loop to solve stochastic problems with deterministic equality constraints.
We describe the general framework, illustrate how different options can be implemented within the \texttt{SQP} method, and propose alternative termination criteria to \eqref{eq:termination_criterion_eq} that leverage quantities from the \texttt{SQP} method to improve empirical performance.

In outer iteration $k$, a subsampled problem \eqref{eq:subsampled_problem} is constructed using a sample set $S_k$, and the dual variable is initialized according to \eqref{eq:dual_update_eq} for the inner loop. 
Within each inner iteration $j$ of outer iteration $k$, the termination criterion \eqref{eq:termination_criterion_eq} is tested.
If the criterion is not satisfied, the iterate and dual variable are updated using an \texttt{SQP} method as follows, based on the approaches in \cite{byrd2010inexact,byrd2008inexact,nocedal2006numerical,berahas2021sequential}.
Given an approximation of the Hessian of the Lagrangian $H_{k,j}$ for the subsampled problem, the method computes a search direction via the subproblem given in \eqref{eq:SQP_quad_model}.
Under the LICQ and assuming the matrix $H_{k, j}$ is positive definite in the null space of $J_{k, j}$, the solution to the \texttt{SQP} subproblem \eqref{eq:SQP_quad_model} can be obtained by solving the linear system
\begin{equation} \label{eq:eq_sqp_system}
    \begin{bmatrix}
        H_{k,j} & J_{k, j}^T \\ J_{k, j} & 0 
    \end{bmatrix}
    \begin{bmatrix}
        d_{k, j} \\ \delta_{k, j} 
    \end{bmatrix}
    = 
    - T_{S_k}(x_{k, j}, \lambda_{k, j})
    +
    \begin{bmatrix}
        \rho_{k, j} \\ r_{k, j} 
    \end{bmatrix},
\end{equation}
where ($d_{k, j}$, $\delta_{k, j}$) is the search direction, and ($\rho_{k, j}$, $r_{k, j}$) are vectors that represent the residual in the linear system.
The linear system can be solved either exactly, i.e., $\rho_{k, j} = 0$, $r_{k, j} = 0$, or inexactly, where $\rho_{k, j}$ and $r_{k, j}$ satisfy certain conditions to ensure ($d_{k, j}$, $\delta_{k, j}$) is a productive search direction \cite{byrd2008inexact,berahas2022adaptive,curtis2021inexact}. 

To balance the two possibly competing goals of minimizing the subsampled objective function and minimizing the constraint violation when updating the iterate, the $l_1$ merit function $\phi_{S_k} : \Rmbb^n \times \Rmbb \to \Rmbb$ defined over the set of samples $S_k$ as,
\begin{align*}
    \phi_{S_k} (x, \tau) = \tau F_{S_k}(x) + \|c(x)\|_1
\end{align*}
is employed, where $\tau > 0$ is the merit parameter updated as necessary within the inner loop.
A local linear model of the merit function $l_{S_k} : \Rmbb^n \times \Rmbb \times \Rmbb^n \to \Rmbb$ defined as, 
\begin{align*}
    l_{S_k} (x_{k, j}, \tau_{k, j}, d_{k,j}) &= \tau_{k, j} \left(F_{S_k}(x_{k, j}) + g_{S_k}(x_{k, j})^Td_{k,j}\right) + \|c_{k, j} + J_{k, j}d_{k, j}\|_1.
\end{align*}
and the reduction in this local model for the search direction $d_{k, j}$ is used to guide the merit parameter selection, defined as $\Delta l_{S_k} : \Rmbb^n \times \Rmbb \times \Rmbb^n \to \Rmbb$, 
\begin{align*}
    \Delta l_{S_k} (x_{k, j}, \tau_{k, j}, d_{k,j}) 
    &= l_{S_k} (x_{k, j}, \tau_{k, j}, 0) - l_{S_k} (x_{k, j}, \tau_{k, j}, d_{k,j}) \\
    &= -\tau_{k, j}g_{S_k}(x_{k, j})^Td_{k,j} + \|c_{k, j}\|_1 - \|c_{k, j} + J_{k, j}d_{k, j}\|_1 \\
    &= -\tau_{k, j}g_{S_k}(x_{k, j})^Td_{k,j} + \|c_{k, j}\|_1 - \|r_{k,j}\|_1. \numberthis \label{eq:eq_delta_l}
\end{align*}
The merit parameter $\tau_{k, j}$ is updated to ensure the search direction is a descent direction for the merit function. With user-defined parameters $\epsilon_\sigma, \epsilon_{\tau}, \epsilon_d \in (0, 1)$, a trial value $\tau_{k,j}^{trial}$ is obtained as,
\begin{align*}
\tau^{trial}_{k,j} =
    \begin{cases}
        \infty & \text{if } g_{S_k}(x_{k, j})^Td_{k,j} + \max\{d_{k,j}^TH_{k,j}d_{k,j}, \epsilon_d \|d_{k, j}\|^2\} \leq 0, \\
        \tfrac{(1 - \epsilon_\sigma) (\|c_{k, j}\|_1 - \|r_{k, j}\|_1)}{g_{S_k}(x_{k, j})^Td_{k,j} + \max\{d_{k,j}^TH_{k,j}d_{k,j}, \epsilon_d \|d_{k, j}\|^2\}} & \text{otherwise,}
    \end{cases}
\end{align*}
and then the merit parameter is updated via,
\begin{align}  \label{eq:eq_merit_update}
\tau_{k,j} =
\begin{cases}
    \tau_{k,j-1} & \text{if } \tau_{k,j-1} \leq \tau^{trial}_{k,j}, \\
    (1 - \epsilon_{\tau}) \tau^{trial}_{k,j} & \text{otherwise,}
\end{cases}
\end{align}
ensuring a non-decreasing positive sequence $\{\tau_{k,j}\}$ within the inner loop of each outer iteration $k$. 
At the beginning of each outer iteration, the merit parameter is reinitialized ($\{\tau_{k, -1}\} = \bar{\tau} > 0$) rather than carried over from previous iterations. While the subsampled objective function can drastically change across outer iterations, carrying over the merit parameter would not affect convergence. However, carrying over the merit parameter might initialize it at a smaller value than required, in turn significantly slowing down the empirical performance of the \texttt{SQP} method in the inner loop.
Finally, the step size $\alpha_{k,j}$ is obtained to satisfy the Armijo sufficient decrease condition on the merit function, i.e.,
\begin{equation} \label{eq:line_search}
    \phi_{S_k}(x_{k, j} + \alpha_{k,j} d_{k,j}, \tau_{k,j}) \leq \phi_{S_k}(x_{k, j}, \tau_{k,j}) - \eta\alpha_{k,j} \Delta l_{S_k} (x_{k, j}, \tau_{k,j}, d_{k,j}),
\end{equation}
where $\eta \in (0, 1)$ is a user-defined parameter, and the iterate and dual variable are updated via $(x_{k, j + 1}, \lambda_{k, j + 1}) = (x_{k, j}, \lambda_{k, j}) + \alpha_{k, j} (d_{k,j}, \delta_{k,j})$.

The \texttt{SQP} linear system \eqref{eq:eq_sqp_system} can be solved inexactly using a numerical solver while still ensuring that $(d_{k, j}, \delta_{k, j})$ is a productive direction.
We adopt the inexactness conditions as proposed in \cite{byrd2008inexact,berahas2022adaptive}. Under these conditions, a solution to the \texttt{SQP} subproblem $(d_{k, j}, \delta_{k, j})$ is accepted if it satisfies either inexactness conditions I, which ensures sufficient decrease in the merit function model,
\begin{align*} 
    \Delta l_{S_k} (x_{k, j}, \tau_{k,j-1}, d_{k,j}) &\geq  \epsilon_{\sigma} ( 1 - \epsilon_{feas}) \max\{\|c_{k, j}\|_1, \|r_{k, j}\| - \|c_{k, j}\|_1\} \\
    &\quad + \epsilon_{\sigma} (1 - \epsilon_{feas}) \tau_{k, j-1} \max\{d_{k,j}^TH_{k,j}d_{k,j}, \epsilon_d \|d_{k, j}\|^2\},\\
    \numberthis \label{eq:eq_inexactness_condition_1}
    \left\|\begin{bmatrix}
    \rho_{k,j} \\ r_{k,j}
    \end{bmatrix}\right\| &\leq  \kappa_{T} 
    \min\left\{\left\|T_{S_k}(x_{k, j}, \lambda_{k, j})\right\|, \|d_{k, j}\|\right\}, \\ \text{and} 
    \qquad 
    \|\rho_{k,j}\| &\leq \kappa' \max\{\|J_{k, j}\|, \|g_{S_k}(x_{k, j})\| \},
\end{align*}
where $\epsilon_{\sigma},\epsilon_{d} \in \left(0, 1\right)$, $\kappa_{T} \in \left(0, \tfrac{1}{2}\right)$ and  $\kappa'>0$ are user-defined parameters, or inexactness conditions II, which guarantees sufficient decrease in the linear model of the constraints,
\begin{align} \label{eq:eq_inexactness_condition_2}
    \|r_{k,j}\| \leq \epsilon_{feas}\|c_{k ,j}\| \quad \text{and} \quad \|\rho_{k,j}\| \leq \epsilon_{opt} \|c_{k, j}\|,
\end{align}
where $\epsilon_{feas}, \epsilon_{opt} \in \left(0, \tfrac{1}{2}\right)$ are user-defined parameters. If inexactness condition I is satisfied, the merit parameter remains unchanged, i.e., $\tau_{k, j} = \tau_{k, j-1}$.
Compared to the conditions proposed in \cite{byrd2010inexact, byrd2008inexact}, inexactness condition I is a stronger condition due to the introduction of the norm of the search direction $\|d_{k, j}\|$ in the second inequality in \eqref{eq:eq_inexactness_condition_1}. This extra condition is not required when using termination criterion \eqref{eq:termination_criterion_eq} in the inner loop, but is introduced to ensure convergence when inexact solutions to the \texttt{SQP} linear system \eqref{eq:eq_sqp_system} are used along with the alternate termination criteria for the inner loop described next.

We summarize the \texttt{SQP}-based variant of \cref{alg:Equality_Constrained_RA} in \cref{alg:Equality_Constrained_RA_SQP} and the following remark. 
\begin{algorithm}
    \caption{Equality Constrained \texttt{RA}-\texttt{SQP}}
    \textbf{Inputs:} Sequences $\{|S_k|\}$, $\{\gamma_k\} < 1$ and $\{\epsilon_k\} \geq 0$; initial iterate $x_{0,0}$ and dual variable $\lambda_{-1, 0}$; initial merit parameter sequence $\{\tau_{k, -1}\} = \bar{\tau} > 0$; other parameters $ \epsilon_{\sigma}, \epsilon_{\tau}, \epsilon_d, \epsilon_{\alpha} \in (0, 1)$.
    \begin{algorithmic}[1]
        \For{$k = 0, 1, 2, \dots$}
            \State Construct subsampled problem \eqref{eq:subsampled_problem} and initialize $\lambda_{k, 0}$ according to \eqref{eq:dual_update_eq} \label{EQ_SQP:subsample}
            \For{$j = 0, 1, 2, \dots$}
                \State \textbf{if} \eqref{eq:termination_criterion_eq} is satisfied; $N_k = j$, \textbf{break} \label{EQ_SQP:termination}
                \State Obtain approximation of Hessian of Lagrangian $H_{k, j}$ \label{EQ_SQP:H}
                \State Solve \eqref{eq:eq_sqp_system} exactly or inexactly and obtain $(d_{k,j},\delta_{k,j})$ \label{EQ_SQP:d_cal}
                \State Update merit parameter via \eqref{eq:eq_merit_update} \label{EQ_SQP:tau_update}
                \State Set $\alpha_{k, j} = 1$ \label{EQ_SQP:line_Search_start}
                \While{\eqref{eq:line_search} is \textbf{not satisfied}}
                    \State Update $\alpha_{k, j} = \epsilon_{\alpha} \alpha_{k, j}$
                \EndWhile \label{EQ_SQP:line_Search_end}
                \State Set $(x_{k, j + 1}, \lambda_{k, j + 1}) = (x_{k, j}, \lambda_{k, j}) + \alpha_{k, j} (d_{k,j}, \delta_{k,j})$ \label{EQ_SQP:update}
            \EndFor
            \State Set $x_{k+1, 0} = x_{k, N_k}$
        \EndFor
    \end{algorithmic}
    \label{alg:Equality_Constrained_RA_SQP}
\end{algorithm}
\begin{remark}
\cref{alg:Equality_Constrained_RA_SQP} follows the framework of \cref{alg:Equality_Constrained_RA}, incorporating an \texttt{SQP} method within the inner loop. We make the following remarks regarding the inner loop. \begin{enumerate}
        \item \textbf{Lagrangian Hessian approximation (Line \ref{EQ_SQP:H}):} While the identity matrix is a common choice for $H_{k, j}$ in constrained stochastic optimization, the deterministic nature of the subsampled problem allows for the use of more sophisticated approximations. In particular, one may employ the subsampled Hessian of the Lagrangian or a quasi-Newton approximation. When quasi-Newton methods, such as BFGS or L-BFGS, are used, the Hessian approximation is updated at each inner iteration and initialized using the final approximation from the previous outer iteration, i.e.,  in outer iteration $k$, we set $H_{k,0} = H_{k-1,N_{k-1}}$.
        \item \textbf{SQP linear system (Line \ref{EQ_SQP:d_cal}):} The \texttt{SQP} subproblem \eqref{eq:eq_sqp_system} can be solved either exactly or inexactly. If a numerical solver is used, one can apply the inexactness conditions \eqref{eq:eq_inexactness_condition_1} and \eqref{eq:eq_inexactness_condition_2} based on \cite{byrd2008inexact,berahas2022adaptive} to ensure productive search directions.
        \item \textbf{Merit parameter and step size (Lines \ref{EQ_SQP:tau_update}-\ref{EQ_SQP:update}):} The merit parameter update follows from \cite{berahas2021sequential,berahas2022adaptive}. Since the objective function can change drastically across outer iterations, the merit parameter is reinitialized at the start of each outer iteration to a user-specified constant $\bar{\tau}$. The step size is computed via a backtracking Armijo line search procedure \cite{powell1986recursive,nocedal2006numerical} on the merit function \eqref{eq:line_search}.
    \end{enumerate}
\end{remark}

For \texttt{SQP} methods, we propose two alternative termination criteria that leverage quantities from the \texttt{SQP} method to assess the progress of the algorithm on the subsampled problem. The first termination criterion is based on the norm of the search direction,
\begin{equation} \label{eq:termination_criterion_d} 
    \Tcal_k : \|d_{k, N_k}\| \leq \gamma_{k} \|d_{k, 0}\| + \epsilon_{k},  
\end{equation}
where the search direction $d_{k, j}$ serves as a measure of optimality. The second termination criterion is based on the decrease in the merit function model,
\begin{align}
    \Tcal_k : \Delta l_{S_k} (x_{k, N_k}, \tau_{k, N_k}, d_{k,N_k})  &\leq \gamma_{k} \min\{\Delta l_{S_k} (x_{k, 0}, \tau_{k, 0}, d_{k,0}), \kappa_d \|d_{k, 0}\|^2\} + \epsilon_{k},\label{eq:termination_criterion_delta_l}
\end{align}
where $\kappa_d > 0$. While the decrease in the merit function model is a bounded quantity, the bound can be quite large, necessitating the inclusion of the search direction norm for proper scaling. In practice, $\kappa_d$ is chosen to be quite large, and the search direction norm term on the right-hand side of \eqref{eq:termination_criterion_delta_l} is typically inactive.

The termination criterion \eqref{eq:termination_criterion_eq}, based on the KKT error of the subsampled problem, while theoretically sound presents challenges in practice.
Specifically, the two components of the KKT error, $\nabla_x \Lcal_{S_k} (x_{k, j}, \lambda_{k, j})$ and $c_{k, j}$ can differ significantly in magnitude, potentially causing the algorithm to focus disproportionately on either reducing the objective function or satisfying the constraints, rather than a balanced approach. Termination criterion \eqref{eq:termination_criterion_d} avoids these scaling issues as it uses the norm of the search direction instead of combining multiple quantities. Termination criterion \eqref{eq:termination_criterion_delta_l} on the other hand explicitly balances the objective function and constraint violation using the merit parameter within the merit function model.

Unlike \eqref{eq:termination_criterion_eq}, criteria \eqref{eq:termination_criterion_d} and \eqref{eq:termination_criterion_delta_l} require the \texttt{SQP} linear system \eqref{eq:eq_sqp_system} solution for evaluation. Thus, they perform one extra solve of the \texttt{SQP} subproblem beyond the number of inner iterations executed during each outer iteration. 
When employing criteria \eqref{eq:termination_criterion_d} or \eqref{eq:termination_criterion_delta_l}, we use the \texttt{Carry-Over} option in \eqref{eq:dual_update_eq} for dual variable initialization and require the Hessian approximation of the Lagrangian to be carried over between outer iterations, i.e., $H_{k, 0} = H_{k-1, N_{k-1}}$. This Hessian requirement is naturally satisfied when using either a constant Hessian approximation or a quasi-Newton method.
Since $x_{k, 0} = x_{k-1, N_{k-1}}$ and $J_{k, 0} = J_{k-1, N_{k-1}}$, the Hessian approximation remains positive definite over the null space of $J_{k, 0}$. 
Thus, if this restriction poses challenges, e.g., when using subsampled Hessians, one can either use a different Hessian approximation for the first inner iteration or instead apply the termination criterion \eqref{eq:termination_criterion_eq}. If additional assumptions regarding the error in the subsampled Hessians are made, the requirement to carry over the Hessian can be dropped. However, for brevity, we maintain this requirement in this work. Termination criteria \eqref{eq:termination_criterion_d} and \eqref{eq:termination_criterion_delta_l} can be used when the \texttt{SQP} linear system is solved exactly or inexactly. While we limit our analysis and empirical exploration to these termination criterion and find that \eqref{eq:termination_criterion_delta_l} performs best empirically, one can consider other criteria based on the deterministic method employed in the inner loop.

\subsection{Retrospective Approximation SQP Convergence Analysis} \label{sec:SQP_based_algorithm_eq_theory}
In this section, we present the convergence properties of \cref{alg:Equality_Constrained_RA_SQP} for problems with only equality constraints. 
We first establish the required conditions under which \cref{alg:Equality_Constrained_RA_SQP} adheres to the theoretical results developed for \cref{alg:Equality_Constrained_RA} in \cref{sec:General_Deterministic_Solver_theory}. We then present results for the alternate termination tests proposed in \cref{sec:SQP_based_algorithm_eq} for the \texttt{SQP} method. 
We begin by stating the assumptions required to ensure the well-posedness of the algorithm.
\begin{assumption}  \label{ass:SQP_eq_assumptions}
    Let $\chi \subset \Rmbb^n$ be a closed bounded convex set containing the sequence of iterates generated by any run of \cref{alg:Equality_Constrained_RA_SQP}. 
    The objective function $f : \Rmbb^n \rightarrow \Rmbb$ is continuously differentiable and its gradient function $\nabla f : \Rmbb^n \rightarrow \Rmbb^n$ is Lipschitz continuous over $\chi$.
    The subsampled objective function $F_{S_k} : \Rmbb^n \rightarrow \Rmbb$ for all $k \geq 0$ is continuously differentiable with Lipschitz continuous gradients over $\chi$.
    The constraint function $c : \Rmbb^n \rightarrow \Rmbb^{m}$ is continuously differentiable and the Jacobian function, $ J = \nabla c^T : \Rmbb^n \rightarrow \Rmbb^{m \times n}$ is Lipschitz continuous over $\chi$. 
    The Jacobian $J(x)$ over $x \in \chi$ has full row rank (LICQ). 
    The sequence of symmetric Hessian approximation matrices $\{H_{k, j}\}$ is bounded in norm by $\kappa_H > 0$ such that $\|H_{k, j}\| \leq \kappa_H$. 
    In addition, there exists a constant $\mu_H > 0$ such that the matrix $u^T H_{k, j} u \geq \mu_H \|u\|^2$ for all $u \in \Rmbb^n$, where $J_{k, j}u = 0$.
\end{assumption}
Under \cref{ass:SQP_eq_assumptions}, there exist $\kappa_J, \kappa_g, \kappa_{HJ} > 0$ such that $\{\|J_{k, j}\|\} \leq \kappa_J$, $\{\|\nabla f(x_{k,j})\|\} \leq \kappa_g$ and $\left\{\left\|\begin{bmatrix}H_{k, j} & J_{k, j}^T\\J_{k, j} & 0\end{bmatrix}^{-1} \right\| \right\} \leq \tfrac{1}{\kappa_{HJ}}$. Thus, \cref{ass:EQ_base assumption} and \ref{ass:Bounded_outer_gradient_strong} are satisfied under \cref{ass:SQP_eq_assumptions}. Compared to \cite{berahas2021sequential,berahas2022adaptive,berahas2025sequential,byrd2008inexact,o2024two}, this is a stronger assumption for \texttt{SQP} based algorithms for stochastic problems with equality constraints, as it requires a closed bounded set of iterates and all subsampled problems to be smooth functions.
These requirements stem from the fundamental structure of \texttt{RA}, which treats each subsampled problem as a deterministic problem, requiring each to satisfy the standard assumptions for deterministic \texttt{SQP} methods, where the objective functions must be smooth with bounded gradients and be bounded below over the set of iterates $\chi$.
In extreme cases, this assumption may not hold for every subsampled problem. As a result, in practice, an upper limit is imposed on the number of inner iterations per outer iteration.

We now formally state the well-posedness of \cref{alg:Equality_Constrained_RA_SQP}.
\begin{theorem} \label{th:EQ_SQP_well_posed}
    Suppose Assumptions \ref{ass:gradient_errors}, \ref{ass:Variance_regularity} and \ref{ass:SQP_eq_assumptions} hold. Then \cref{alg:Inequality_Constrained_RA_SQP} is well-posed and the inner loop converges at a sublinear rate satisfying \cref{ass:sublinear_solver}, achieving the gradient and inner iteration complexity described in \cref{th:EQ_work_complexity}.
\end{theorem}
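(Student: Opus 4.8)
The plan is to recognize \cref{alg:Equality_Constrained_RA_SQP} as a concrete instance of the general framework \cref{alg:Equality_Constrained_RA} in which the inner loop runs a deterministic inexact line search \texttt{SQP} method on the fixed subsampled problem \eqref{eq:subsampled_problem}. The crux is therefore to show that, for each fixed outer iteration $k$, this deterministic solver drives the subsampled KKT residual $\|T_{S_k}(x_{k,j},\lambda_{k,j})\|$ to zero at a sublinear rate, uniformly in $k$; \cref{ass:well_posed} and \cref{ass:sublinear_solver} then follow, and the complexity claims are immediate from \cref{th:EQ_work_complexity}. First I would observe that \cref{ass:SQP_eq_assumptions} guarantees that every subsampled problem satisfies the standard deterministic \texttt{SQP} hypotheses with constants that are uniform over $k$: $\chi$ is compact so each $F_{S_k}$ is bounded below, the gradients $g_{S_k}$ and the Jacobian $J$ are uniformly Lipschitz, LICQ holds, and the saddle-point matrix in \eqref{eq:eq_sqp_system} is uniformly nonsingular (the bound $\kappa_{HJ}$ noted after \cref{ass:SQP_eq_assumptions}).

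Next I would establish the three quantitative ingredients of the deterministic analysis, each with a constant independent of $(k,j)$. First, a uniform positive lower bound $\tau_{\min}>0$ on the merit parameter: the trial values $\tau^{trial}_{k,j}$ are bounded below because the numerator and denominator defining them are controlled by the bounded quantities $\kappa_g,\kappa_J,\kappa_H,\mu_H$, and since $\tau_{k,-1}=\bar\tau$ is fixed and $\{\tau_{k,j}\}_j$ is nonincreasing by \eqref{eq:eq_merit_update}, it cannot fall below $\tau_{\min}$. Second, a model-decrease lower bound $\Delta l_{S_k}(x_{k,j},\tau_{k,j},d_{k,j}) \geq \kappa_l \|T_{S_k}(x_{k,j},\lambda_{k,j})\|^2$ for some $\kappa_l>0$, obtained from the \texttt{SQP} system together with positive definiteness of $H_{k,j}$ on the null space of $J_{k,j}$; the inexactness conditions \eqref{eq:eq_inexactness_condition_1}--\eqref{eq:eq_inexactness_condition_2} are designed precisely so that this bound survives when \eqref{eq:eq_sqp_system} is solved inexactly. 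Third, a uniform lower bound $\alpha_{\min}>0$ on the accepted step size, which follows because the Armijo condition \eqref{eq:line_search} holds once $\alpha$ is small relative to the Lipschitz constants, so backtracking terminates in a bounded number of steps.

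Combining these ingredients, each accepted step decreases the merit function by at least $\eta\,\alpha_{\min}\,\kappa_l\,\|T_{S_k}(x_{k,j},\lambda_{k,j})\|^2$. Since $\phi_{S_k}(\cdot,\tau_{k,j})$ is bounded below on the compact set $\chi$, uniformly as $\tau_{k,j}\in[\tau_{\min},\bar\tau]$, telescoping the decreases over the inner loop gives $\sum_{j} \|T_{S_k}(x_{k,j},\lambda_{k,j})\|^2 \leq C$ with $C$ independent of $k$. Hence $\min_{j<N}\|T_{S_k}(x_{k,j},\lambda_{k,j})\|^2 \leq C/N$, so $\|T_{S_k}(x_{k,N_k},\lambda_{k,N_k})\| \leq \epsilon$ is reached within $N_k = \Ocal(\epsilon^{-2})$ iterations; this simultaneously yields finite termination of the inner loop (\cref{ass:well_posed}) and the sublinear rate (\cref{ass:sublinear_solver}). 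Finally, since \cref{ass:SQP_eq_assumptions} implies \cref{ass:EQ_base assumption} and \cref{ass:Bounded_outer_gradient_strong}, all hypotheses of \cref{th:EQ_work_complexity} are in place, delivering the stated $\Ocal(\epsilon^{-2})$ inner-iteration and $\Ocal(|\Scal|\epsilon^{-2})$ gradient complexities in the finite-sum case and the $\Ocal(\epsilon^{-2})$ inner-iteration, $\Ocal(\epsilon^{-4})$ gradient complexities in the expectation case.

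The hard part will be securing the uniformity of the constants $\tau_{\min}$, $\kappa_l$, and $\alpha_{\min}$ across all outer iterations simultaneously, rather than merely for a single fixed subsampled problem: a naive deterministic analysis yields $k$-dependent constants, and it is precisely the compactness of $\chi$ together with the uniform Lipschitz and uniform Hessian bounds in \cref{ass:SQP_eq_assumptions} that let these constants be fixed once and for all. A secondary technical point is verifying that the inexact directions admitted by \eqref{eq:eq_inexactness_condition_1}--\eqref{eq:eq_inexactness_condition_2} preserve the model-decrease bound; this is where the additional $\|d_{k,j}\|$ term flagged after \eqref{eq:eq_inexactness_condition_1} is needed.
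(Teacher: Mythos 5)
Your proposal is correct in substance and follows the same overall reduction as the paper: show that under \cref{ass:SQP_eq_assumptions} each subsampled problem satisfies the standard deterministic \texttt{SQP} hypotheses uniformly in $k$ (so \cref{ass:well_posed} and \cref{ass:sublinear_solver} hold), note that \cref{ass:SQP_eq_assumptions} subsumes \cref{ass:EQ_base assumption} and \cref{ass:Bounded_outer_gradient_strong}, and then invoke \cref{th:EQ_work_complexity}. The difference is in how the sublinear inner-loop rate is obtained. The paper treats the deterministic complexity theory as a black box: it cites \cite[Theorem 1]{curtis2024worst} for exact subproblem solutions and \cite{byrd2008inexact,berahas2022adaptive,curtis2021inexact,byrd2010inexact} for inexact ones, and only verifies their hypotheses (each $F_{S_k}$ bounded below with bounded gradients on the compact $\chi$), observing that the constant $\bar{\tau}\bigl(F_{S_k}(x_{k,0}) - \min_{x \in \chi} F_{S_k}(x)\bigr) + \|c_{k,0}\|_1$ in the resulting bound $N_k = \Ocal(\epsilon^{-2})$ is bounded uniformly over $k$ by boundedness of $\chi$. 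You instead re-derive that deterministic result from first principles via the merit-parameter lower bound, the model-decrease bound $\Delta l_{S_k} \geq \kappa_l \|T_{S_k}\|^2$, the Armijo step-size lower bound, and telescoping. Your route is self-contained and makes the uniformity-in-$k$ of the constants explicit (which the paper handles implicitly through the uniform assumption constants $\kappa_H, \mu_H, \kappa_{HJ}, \kappa_g, \kappa_J$); its cost is that you must actually carry out the deterministic analysis, including one point you gloss over: when the merit parameter decreases mid-loop, $\phi_{S_k}(x,\tau)$ can jump upward (since $\tau \mapsto \tau F_{S_k}(x)$ may increase the gap to the lower bound when $F_{S_k}(x) < 0$), so the telescoping argument needs the standard accounting for the finitely many, geometrically bounded merit-parameter reductions — precisely the technical content of the results the paper cites. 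The paper's citation-based proof is shorter and automatically covers the inexact case; yours would be preferable if one wanted the theorem to stand without external dependencies.
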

\begin{proof}
    The convergence of the \texttt{SQP} algorithm in the inner loop is established in \cite{curtis2024worst} for exact subproblem solutions and in \cite{byrd2008inexact} for inexact subproblem solutions, under \cref{ass:SQP_eq_assumptions}. 
    These results apply because, over the bounded set of iterates $\chi$, the subsampled problem $F_{S_k}(x)$ for all $k \geq 0$ is bounded below and has bounded gradients, thereby satisfying \cref{ass:well_posed}. 
    Under \cref{ass:SQP_eq_assumptions} and by \cite[Theorem 1]{curtis2024worst}, when exact search direction solutions are used, the number of inner loop iterations to achieve an $\epsilon$ accurate solution for the subsampled problem in outer iteration $k$ is
    \begin{align*}
        N_k \leq \left[\tfrac{\bar{\tau} \left(F_{S_k}(x_{k, 0}) - \min_{x \in \chi} F_{S_k}(x)\right) + \|c_{k, 0}\|_1}{\kappa_k}\right] \tfrac{1}{\epsilon^2},
    \end{align*}
    where $\kappa_k$ is a finite constant based on $F_{S_k}(x)$ and the term within the right-hand side brackets is bounded as the set of iterates $\chi$ is a bounded set, thereby satisfying \cref{ass:sublinear_solver}. This ensures that \cref{alg:Equality_Constrained_RA_SQP} achieves the complexity results stated in \cref{th:EQ_work_complexity}.
    Similar results can be obtained when inexact search direction solutions are used, as shown in \cite{berahas2022adaptive,curtis2021inexact,byrd2008inexact,byrd2010inexact}.
\end{proof}

\cref{th:EQ_SQP_well_posed} establishes that \cref{alg:Equality_Constrained_RA_SQP} achieves the optimal gradient ($\Ocal\left(\epsilon^{-4}\right)$) and \texttt{SQP} linear system subproblem complexity ($\Ocal\left(\epsilon^{-2}\right)$) for stochastic problems with equality constraints.
We now analyze the convergence of \cref{alg:Equality_Constrained_RA_SQP} under the alternate termination criteria proposed in \cref{sec:SQP_based_algorithm_eq}. 
To establish the results in \cref{sec:General_Deterministic_Solver_theory}, we need to show an error bound analogous to \cref{lem:main_error_bound} and adapt \cref{cond:eq_norm_test} to align with the optimality measure used in the 
termination criteria.

We first consider the termination criterion \eqref{eq:termination_criterion_d}, which is based on the norm of the search direction.
We define $(\hat{d}_{k,j},\hat{\delta}_{k,j})$ as the exact solutions to the \texttt{SQP} linear system \eqref{eq:eq_sqp_system}, and $(d_{k,j}^{true},\delta_{k,j}^{true})$ as the exact solutions to the \texttt{SQP} linear system \eqref{eq:eq_sqp_system} when the KKT error of the true problem is used instead of the KKT error of the subsampled problem.
If $d_{k, j}^{true} = 0$, then $x_{k, j}$ is a first-order stationary point for the true problem \eqref{eq:intro_problem}.
\begin{lemma} \label{lem:error_inexact_d}
    Suppose \cref{ass:SQP_eq_assumptions} holds. Then for all $k \geq 0, \,\, j \geq 0$, if $\kappa_T$ in \eqref{eq:eq_inexactness_condition_1}, and $\epsilon_{feas}$ and $\epsilon_{opt}$ in \eqref{eq:eq_inexactness_condition_2} are chosen to be such that $\tfrac{\kappa_T}{\kappa_{HJ}} < \frac{1}{2}$ and $\tfrac{\kappa_J (\epsilon_{feas} + \epsilon_{opt})}{\kappa_{HJ}} < 1$, there exists $\eta_d \in (0,1)$
    such that the search direction generated by \cref{alg:Equality_Constrained_RA_SQP} satisfies,
    \begin{align*}
        \|d_{k, j} - \hat{d}_{k, j}\| \leq \eta_d \|\hat{d}_{k, j}\|.
    \end{align*}
\end{lemma}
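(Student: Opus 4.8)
The plan is to compare the inexact and exact search directions by subtracting their defining linear systems and then controlling the residual $[\rho_{k,j};r_{k,j}]$ separately in the two inexactness regimes. First I would observe that the exact solution $(\hat d_{k,j},\hat\delta_{k,j})$ solves \eqref{eq:eq_sqp_system} with zero residual, so subtracting that equation from the one defining $(d_{k,j},\delta_{k,j})$ gives
$$\begin{bmatrix} H_{k,j} & J_{k,j}^T \\ J_{k,j} & 0 \end{bmatrix}\begin{bmatrix} d_{k,j}-\hat d_{k,j} \\ \delta_{k,j}-\hat\delta_{k,j}\end{bmatrix} = \begin{bmatrix}\rho_{k,j}\\ r_{k,j}\end{bmatrix}.$$
Inverting the KKT matrix and using the uniform bound $\left\|\begin{bmatrix}H_{k,j}&J_{k,j}^T\\J_{k,j}&0\end{bmatrix}^{-1}\right\|\le \tfrac{1}{\kappa_{HJ}}$ from the discussion following \cref{ass:SQP_eq_assumptions} yields the master estimate $\|d_{k,j}-\hat d_{k,j}\| \le \tfrac{1}{\kappa_{HJ}}\left\|\begin{bmatrix}\rho_{k,j}\\ r_{k,j}\end{bmatrix}\right\|$. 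Everything then reduces to bounding the residual norm by $\|\hat d_{k,j}\|$ with a constant strictly below one.

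Next I would split on which acceptance condition holds. If \eqref{eq:eq_inexactness_condition_1} holds, the residual obeys $\left\|\begin{bmatrix}\rho_{k,j}\\ r_{k,j}\end{bmatrix}\right\|\le \kappa_T\|d_{k,j}\|$ by taking the $\|d_{k,j}\|$ branch of the minimum, so the master estimate gives $\|d_{k,j}-\hat d_{k,j}\| \le \tfrac{\kappa_T}{\kappa_{HJ}}\|d_{k,j}\|$. Since this bound is in terms of the inexact direction, I would close it with the triangle inequality $\|d_{k,j}\|\le \|d_{k,j}-\hat d_{k,j}\|+\|\hat d_{k,j}\|$ and rearrange, obtaining $\|d_{k,j}-\hat d_{k,j}\|\le \tfrac{\kappa_T/\kappa_{HJ}}{1-\kappa_T/\kappa_{HJ}}\|\hat d_{k,j}\|$; the hypothesis $\kappa_T/\kappa_{HJ}<\tfrac12$ is exactly what forces this prefactor into $(0,1)$.

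If instead \eqref{eq:eq_inexactness_condition_2} holds, I would use the bottom block row of the exact system, $J_{k,j}\hat d_{k,j}=-c_{k,j}$, to get $\|c_{k,j}\|\le \kappa_J\|\hat d_{k,j}\|$, combine it with $\left\|\begin{bmatrix}\rho_{k,j}\\ r_{k,j}\end{bmatrix}\right\|\le (\epsilon_{feas}+\epsilon_{opt})\|c_{k,j}\|$ (via $\sqrt{a^2+b^2}\le a+b$), and feed the result into the master estimate to get $\|d_{k,j}-\hat d_{k,j}\|\le \tfrac{\kappa_J(\epsilon_{feas}+\epsilon_{opt})}{\kappa_{HJ}}\|\hat d_{k,j}\|$, which is below one by hypothesis. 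Taking $\eta_d$ to be the maximum of the two prefactors completes the argument.

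I expect the only genuine subtlety to be Case~I, where the natural residual bound is self-referential, controlling the error by the \emph{inexact} direction $d_{k,j}$ rather than the exact $\hat d_{k,j}$; the triangle-inequality rearrangement is what converts it into a clean estimate and, importantly, explains why the sharper threshold $\kappa_T/\kappa_{HJ}<\tfrac12$ (rather than $<1$) is needed. Case~II is direct, since the constraint residual $c_{k,j}$ is tied to the exact direction through the second block row, so no fixed-point step arises there.
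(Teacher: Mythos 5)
Your proposal is correct and follows essentially the same route as the paper's proof: the same KKT-matrix subtraction and inversion to get the master residual estimate, the same triangle-inequality rearrangement in Case I (which is exactly why the paper needs $\kappa_T/\kappa_{HJ}<\tfrac12$), and the same use of the exact system's bottom block row $J_{k,j}\hat d_{k,j}=-c_{k,j}$ in Case II. The exact-solve situation is covered automatically by your master estimate (zero residual), matching the paper's trivial case.
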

\begin{proof}
    If the \texttt{SQP} linear system \eqref{eq:eq_sqp_system} is solved exactly, the relation is trivially satisfied with $\eta_d = 0$. When $d_{k, j}$ is an inexact solution, it satisfies either \eqref{eq:eq_inexactness_condition_1} or \eqref{eq:eq_inexactness_condition_2}. By the definitions of $d_{k, j}$ and $\hat{d}_{k, j}$ from \eqref{eq:eq_sqp_system}, we have
    $\begin{bmatrix}
        H_{k, j} & J_{k, j}^T \\ J_{k, j} & 0
    \end{bmatrix}
    \begin{bmatrix}
        d_{k, j} - \hat{d}_{k, j} \\ \delta_{k, j} - \hat{\delta}_{k, j}
    \end{bmatrix}
    =
    \begin{bmatrix}
        \rho_{k, j} \\ r_{k, j}
    \end{bmatrix}$. Thus, by \cref{ass:SQP_eq_assumptions},
    \begin{align*}
        \|d_{k, j} - \hat{d}_{k, j}\|
        \leq
        \left\|\begin{bmatrix}
            d_{k, j} - \hat{d}_{k, j} \\ \delta_{k, j} - \hat{\delta}_{k, j}
        \end{bmatrix}\right\|
        =
        \left\|\begin{bmatrix}
            H_{k, j} & J_{k, j}^T \\ J_{k, j} & 0
        \end{bmatrix}^{-1}
        \begin{bmatrix}
            \rho_{k, j} \\ r_{k, j}
        \end{bmatrix}\right\|
        \leq
        \kappa_{HJ}^{-1}
        \left\|
        \begin{bmatrix}
            \rho_{k, j} \\ r_{k, j}
        \end{bmatrix}\right\|.
    \end{align*}
    If the inexactness conditions \eqref{eq:eq_inexactness_condition_1} are satisfied,
    \begin{align*}
        \|d_{k, j} - \hat{d}_{k, j}\|
        &
        \leq
        \tfrac{\kappa_T}{\kappa_{HJ}}
        \min\left\{\left\|T_{S_k}(x_{k, j}, \lambda_{k, j})\right\|, \|d_{k, j}\|\right\}
        \leq
        \tfrac{\kappa_T}{\kappa_{HJ}} \|d_{k, j}\| \leq \eta_d \|\hat{d}_{k, j}\|,
    \end{align*}
    where $\eta_d = \tfrac{\tfrac{\kappa_T}{\kappa_{HJ}} }{1 - \tfrac{\kappa_T}{\kappa_{HJ}}} < 1$ as $\tfrac{\kappa_T}{\kappa_{HJ}} < \tfrac{1}{2}$. 
    If the inexactness conditions \eqref{eq:eq_inexactness_condition_2} are satisfied,
    \begin{align*}
        \|d_{k, j} - \hat{d}_{k, j}\|
        &
        \leq
        \tfrac{\|\rho_{k, j}\| + \|r_{k, j}\|}{\kappa_{HJ}}
        \leq
        \tfrac{\epsilon_{feas} + \epsilon_{opt}}{\kappa_{HJ}} \|c_{k, j}\|
        =
        \tfrac{\epsilon_{feas} + \epsilon_{opt}}{\kappa_{HJ}} \|J_{k, j } \hat{d}_{k, j}\|
        \leq 
        \eta_d \|\hat{d}_{k, j}\|,
    \end{align*}
    where $\eta_d = \tfrac{\kappa_J(\epsilon_{feas} + \epsilon_{opt})}{\kappa_{HJ}} < 1$ as specified.
\end{proof}

We now establish an error bound similar to \cref{lem:main_error_bound}, under the termination criterion \eqref{eq:termination_criterion_d}. 

\begin{lemma} \label{lem:step_norm_error_bound}
    Suppose \cref{ass:SQP_eq_assumptions} holds. Then for all $k \geq 0$, the true search directions for the outer iterates generated by \cref{alg:Equality_Constrained_RA_SQP} with termination criterion \eqref{eq:termination_criterion_d} satisfy
    \begin{align*}
        \|d_{k, N_k}^{true}\|
        &\leq \gamma_k \tfrac{1 + \eta_d}{1 - \eta_d} \|d_{k-1, N_{k-1}}^{true}\| + \tfrac{\epsilon_k}{1 - \eta_d}  +  \tfrac{1 + \eta_d}{1 - \eta_d}\kappa_{HJ}^{-1}\|\nabla f(x_{k, N_k}) - g_{S_k}(x_{k, N_k})\| \\
        &\quad + \gamma_k \tfrac{1 + \eta_d}{1 - \eta_d}\kappa_{HJ}^{-1}\|\nabla f(x_{k, 0}) - g_{S_k}(x_{k, 0})\|.
    \end{align*}
    For the expectation problem \eqref{eq:intro_stoch_error_obj}, for all $k \geq 0$
    \begin{align*}
        \Embb[\|d_{k, N_k}^{true}\| | \Fcal_k]
        &\leq  \gamma_k \tfrac{1 + \eta_d}{1 - \eta_d} \|d_{k-1, N_{k-1}}^{true}\| + \tfrac{\Embb\left[\epsilon_k | \Fcal_k\right]}{1 - \eta_d} + \kappa_{HJ}^{-1} \tfrac{1 + \eta_d}{1 - \eta_d} \Embb[\|\nabla f(x_{k, N_k}) - g_{S_k}(x_{k, N_k})\| | \Fcal_k] \\
        &\quad +  \gamma_k \tfrac{1 + \eta_d}{1 - \eta_d} \kappa_{HJ}^{-1} \Embb[\|\nabla f(x_{k, 0}) - g_{S_k}(x_{k, 0})\| | \Fcal_k]. 
    \end{align*}
\end{lemma}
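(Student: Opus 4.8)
The plan is to relate the true search direction $d_{k, N_k}^{true}$ to the inexact direction $d_{k, N_k}$ actually computed by the algorithm, using the exact subsampled direction $\hat{d}_{k, N_k}$ as an intermediary, and then to invoke the termination criterion \eqref{eq:termination_criterion_d} together with a carry-over identification at the start of each outer iteration. Two bridging inequalities do the work. First, \cref{lem:error_inexact_d} gives $\|d_{k, j} - \hat{d}_{k, j}\| \leq \eta_d \|\hat{d}_{k, j}\|$, which immediately yields the two-sided bound $(1 - \eta_d)\|\hat{d}_{k, j}\| \leq \|d_{k, j}\| \leq (1 + \eta_d)\|\hat{d}_{k, j}\|$. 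Second, since $\hat{d}_{k, j}$ and $d_{k, j}^{true}$ solve \eqref{eq:eq_sqp_system} with the \emph{same} coefficient matrix but right-hand sides $-T_{S_k}(x_{k, j}, \lambda_{k, j})$ and $-T(x_{k, j}, \lambda_{k, j})$, which differ only in the gradient block by $\nabla f(x_{k, j}) - g_{S_k}(x_{k, j})$ (the constraint $c$ being deterministic), \cref{ass:SQP_eq_assumptions} gives $\|\hat{d}_{k, j} - d_{k, j}^{true}\| \leq \kappa_{HJ}^{-1}\|\nabla f(x_{k, j}) - g_{S_k}(x_{k, j})\|$.

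The crucial structural observation is that $d_{k, 0}^{true} = d_{k-1, N_{k-1}}^{true}$. This holds because, under criterion \eqref{eq:termination_criterion_d}, the algorithm uses the \texttt{Carry-Over} dual initialization and carries over the Hessian approximation, so that $x_{k, 0} = x_{k-1, N_{k-1}}$, $\lambda_{k, 0} = \lambda_{k-1, N_{k-1}}$, $H_{k, 0} = H_{k-1, N_{k-1}}$, and $J_{k, 0} = J_{k-1, N_{k-1}}$; hence the linear system \eqref{eq:eq_sqp_system} defining $d_{k, 0}^{true}$ is identical, in both matrix and right-hand side, to the one defining $d_{k-1, N_{k-1}}^{true}$.

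With these in hand I would chain the estimates in the following order: start from $\|d_{k, N_k}^{true}\| \leq \|\hat{d}_{k, N_k}\| + \kappa_{HJ}^{-1}\|\nabla f(x_{k, N_k}) - g_{S_k}(x_{k, N_k})\|$; bound $\|\hat{d}_{k, N_k}\| \leq (1 - \eta_d)^{-1}\|d_{k, N_k}\|$; apply the termination criterion $\|d_{k, N_k}\| \leq \gamma_k \|d_{k, 0}\| + \epsilon_k$; bound $\|d_{k, 0}\| \leq (1 + \eta_d)\|\hat{d}_{k, 0}\|$; bound $\|\hat{d}_{k, 0}\| \leq \|d_{k, 0}^{true}\| + \kappa_{HJ}^{-1}\|\nabla f(x_{k, 0}) - g_{S_k}(x_{k, 0})\|$; and finally substitute $d_{k, 0}^{true} = d_{k-1, N_{k-1}}^{true}$. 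Collecting terms produces the coefficient $\gamma_k \tfrac{1 + \eta_d}{1 - \eta_d}$ on $\|d_{k-1, N_{k-1}}^{true}\|$, the coefficient $\tfrac{1}{1 - \eta_d}$ on $\epsilon_k$, and $\gamma_k \tfrac{1 + \eta_d}{1 - \eta_d}\kappa_{HJ}^{-1}$ on the initial-iterate gradient-error term. The final-iterate gradient-error term carries the coefficient $\kappa_{HJ}^{-1}$, which is dominated by the stated $\tfrac{1 + \eta_d}{1 - \eta_d}\kappa_{HJ}^{-1}$ since $\eta_d \in (0, 1)$, so the claimed inequality follows a fortiori.

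The expectation version follows by taking the conditional expectation given $\Fcal_k$, exactly as in \cref{lem:main_error_bound}: the quantities $x_{k, 0}$, $\lambda_{k, 0}$, $H_{k, 0}$, and therefore $d_{k-1, N_{k-1}}^{true}$, are $\Fcal_k$-measurable and pull out of the expectation, while $\epsilon_k$ and the two gradient-error norms remain inside. The main obstacle I anticipate is not any single estimate but verifying the carry-over identity $d_{k, 0}^{true} = d_{k-1, N_{k-1}}^{true}$ rigorously — it is precisely here that the requirement to use \texttt{Carry-Over} and to carry the Hessian between outer iterations (rather than \texttt{Reinitialize}) becomes essential, since a reinitialized dual would alter the right-hand side $T(x_{k, 0}, \lambda_{k, 0})$ and break the recursion. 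The only other point requiring care is checking that $\kappa_T, \epsilon_{feas}, \epsilon_{opt}$ meet the hypotheses of \cref{lem:error_inexact_d}, so that $\eta_d \in (0, 1)$ and the factor $\tfrac{1 + \eta_d}{1 - \eta_d}$ is finite.
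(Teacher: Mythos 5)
Your proposal is correct and follows essentially the same route as the paper's proof: the same three-way decomposition into $d_{k,j}$, $\hat{d}_{k,j}$, $d_{k,j}^{true}$, the same two bridging bounds (\cref{lem:error_inexact_d} and the perturbation estimate $\|\hat{d}_{k,j} - d_{k,j}^{true}\| \leq \kappa_{HJ}^{-1}\|\nabla f(x_{k,j}) - g_{S_k}(x_{k,j})\|$), the same carry-over identity $d_{k,0}^{true} = d_{k-1,N_{k-1}}^{true}$, and the same passage to conditional expectation. The only difference is bookkeeping: by bounding $\|\hat{d}_{k,N_k}\| \leq (1-\eta_d)^{-1}\|d_{k,N_k}\|$ directly, you avoid the paper's implicit-inequality step (subtracting $\eta_d \|d_{k,N_k}^{true}\|$ from both sides and dividing by $1-\eta_d$), which in fact leaves the final-iterate gradient-error term with the slightly sharper coefficient $\kappa_{HJ}^{-1}$ rather than $\tfrac{1+\eta_d}{1-\eta_d}\kappa_{HJ}^{-1}$, so your bound implies the stated one a fortiori, exactly as you observe.
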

\begin{proof}
    We start with a bound on the error in the search direction due to subsampled gradients. By the definition of $d_{k, j}^{true}$ and $\hat{d}_{k, j}$ from \eqref{eq:eq_sqp_system},
    \begin{align*}
        \begin{bmatrix}
            H_{k, j} & J_{k,j}^T \\
            J_{k,j} & 0
        \end{bmatrix}
        \begin{bmatrix}
            \hat{d}_{k,j} - d_{k,j}^{true} \\
            \hat{\delta}_{k,j} - \delta_{k,j}^{true}
        \end{bmatrix}
        = -
        \begin{bmatrix}
            g_{S_k}(x_{k,j}) - \nabla f(x_{k,j}) \\
            0
        \end{bmatrix}.
    \end{align*}
    By \cref{ass:SQP_eq_assumptions},
    \begin{align*}
        \|\hat{d}_{k,j} - d_{k,j}^{true}\| &\leq
        \left\|\begin{bmatrix}
            \hat{d}_{k,j} - d_{k,j}^{true} \\
            \hat{\delta}_{k,j} - \delta_{k,j}^{true}
        \end{bmatrix}\right\|
        \leq
        \kappa_{HJ}^{-1} \|\nabla f(x_{k,j}) - g_{S_k}(x_{k,j})\|.
    \end{align*}
    At the end of outer iteration $k$, the norm of the true search direction can be bounded as,
    \begin{align*}
        \|d_{k, N_k}^{true}\|
        &\leq \|d_{k, N_k}\| + \|\hat{d}_{k, N_k} - d_{k, N_k}\| + \|d_{k, N_k}^{true} - \hat{d}_{k, N_k}\| \\
        &\leq  \gamma_k \|d_{k, 0}\| + \epsilon_k + \eta_d \|\hat{d}_{k, N_k}\| + \|d_{k, N_k}^{true} - \hat{d}_{k, N_k}\|\\ 
        &\leq \gamma_k \|\hat{d}_{k, 0}\| + \gamma_k \|d_{k, 0} - \hat{d}_{k, 0}\| + \epsilon_k + \eta_d \|d_{k, N_k}^{true}\| + \eta_d \|\hat{d}_{k, N_k} - d_{k, N_k}^{true}\| + \|d_{k, N_k}^{true} - \hat{d}_{k, N_k}\| \\ 
        &\leq \gamma_k (1 + \eta_d) \left(\|\hat{d}_{k, 0} - d_{k, 0}^{true}\| + \|d_{k, 0}^{true}\|\right) + \epsilon_k +  (1 + \eta_d)\|d_{k, N_k}^{true} - \hat{d}_{k, N_k}\| + \eta_d \|d_{k, N_k}^{true}\|\\ 
        &= \gamma_k \tfrac{1 + \eta_d}{1 - \eta_d} \|d_{k - 1, N_{k-1}}^{true}\| + \tfrac{\epsilon_k}{1 - \eta_d} +  \tfrac{1 + \eta_d}{1 - \eta_d}\|d_{k, N_k}^{true} - \hat{d}_{k, N_k}\|  + \gamma_k \tfrac{1 + \eta_d}{1 - \eta_d}\|\hat{d}_{k, 0} - d_{k, 0}^{true}\|,
    \end{align*}
    where the second inequality follows from the termination criterion \eqref{eq:termination_criterion_d} and \cref{lem:error_inexact_d}, the fourth inequality follows form \cref{lem:error_inexact_d} and the final equality follows from subtracting $\eta_d \|d^{true}_{k, N_k}\|$ from both sides and dividing by $1 -\eta_d$, and from $d_{k, 0}^{true} = d_{k-1, N_{k-1}}^{true}$ as $x_{k, 0} = x_{k-1, N_{k-1}}$, $H_{k, 0} = H_{k-1, N_{k-1}}$ and $\lambda_{k, 0} = \lambda_{k-1, N_{k-1}}$.
    Employing the search direction error bound in the above inequality yields the desired deterministic bound. 
    Taking the conditional expectation given $\Fcal_k$ yields the expectation bound as $d_{k-1, N_{k-1}}^{true}$ is defined under $\Fcal_k$.
\end{proof}
Using \cref{lem:step_norm_error_bound}, we can establish results analogous to \cref{th:Eq_convergence} by incorporating the new scaling factor $\kappa_{HJ}^{-1} \left(\tfrac{1 + \eta_d}{1 - \eta_d}\right)$ and requiring the termination criterion parameter to satisfy $\gamma_k < \tfrac{1 - \eta_d}{1 + \eta_d}$ for all $k \geq 0$. When exact solutions to the \texttt{SQP} linear system are used, $\eta_d = 0$, and it suffices to have $\{\gamma_k\} < 1$. We now modify \cref{cond:eq_norm_test} to align the adaptive sampling strategy with termination criterion \eqref{eq:termination_criterion_d}.
\begin{condition} \label{cond:eq_norm_test_d}
    At the beginning of outer iteration $k \geq 0$ in \cref{alg:Equality_Constrained_RA_SQP} with termination criterion \eqref{eq:termination_criterion_d}, the batch size $|S_k|$ is selected such that:
    \begin{enumerate}
        \item For the finite-sum problem \eqref{eq:intro_deter_error_obj}: With constants $\theta, \hat{\theta}, a \geq 0$ and $\beta \in (0, 1)$, 
        \begin{align*}
            \|\nabla f(x_{k, 0}) - g_{S_k}(x_{k, 0})\|^2 &\leq \theta^2 \|d_{k, 0}^{true}\|^2 + a^2 \beta^{2k},\\
            \|\nabla f(x_{k, N_k}) - g_{S_k}(x_{k, N_k})\|^2 &\leq \hat{\theta}^2 \left(\theta^2 \|d_{k, 0}^{true}\|^2 + a^2 \beta^{2k}\right).
        \end{align*}
        \item For the expectation problem \eqref{eq:intro_stoch_error_obj}: With constants $\tilde{\theta}, \tilde{a} \geq 0$ and $\tilde{\beta} \in (0, 1)$, 
        \begin{align*}
            \Embb\left[\|\nabla f(x_{k, 0}) - g_{S_k}(x_{k, 0})\|^2 | \Fcal_k\right] \leq \tilde{\theta}^2 \|d_{k, 0}^{true}\|^2 + \tilde{a}^2 \tilde{\beta}^{2k}.
        \end{align*}
    \end{enumerate}
\end{condition}
Following the approach of \cref{th:EQ_outer_iter_complexity}, we can establish linear convergence for the true search direction norm across outer iterations under \cref{ass:SQP_eq_assumptions}, termination criterion \eqref{eq:termination_criterion_d} and 
\cref{cond:eq_norm_test_d}. The result is formalized in the following corollary.
\begin{theorem} \label{th:EQ_outer_iter_complexity_step_norm}
    Suppose Assumptions \ref{ass:gradient_errors} and \ref{ass:SQP_eq_assumptions} hold and that the batch size sequence $\{|S_k|\}$ is chosen to satisfy \cref{cond:eq_norm_test_d} in \cref{alg:Equality_Constrained_RA_SQP} with termination criterion \eqref{eq:termination_criterion_d}.
    \begin{enumerate}
        \item For the finite-sum problem \eqref{eq:intro_deter_error_obj}: For all $k \geq 0$, if termination criterion parameters are chosen as $0 \leq \gamma_k \leq \gamma < \tfrac{1 - \eta_d}{1 + \eta_d}$, $\epsilon_k = \omega \|\nabla f(x_{k, 0}) - g_{S_k}(x_{k, 0})\| + \hat{\omega}\beta^k$ with $\omega, \hat{\omega} \geq 0$ and \cref{cond:eq_norm_test_d} parameters are chosen such that $a_1 = \tfrac{1}{1 - \eta_d}\left[ \gamma(1 + \eta_d) + \theta\left(\omega + (\gamma + \hat{\theta})\tfrac{1 + \eta_d}{\kappa_{HJ}}\right) \right] < 1$, then, the true search direction norm converges at a linear rate across outer iterations, i.e., 
        \begin{align*}
            \|d_{k, N_k}^{true}\|
            \leq \max \{a_1 + \nu, \beta\}^{k+1} \max\left\{\|d_{0, 0}^{true}\|, \tfrac{a_2}{\nu}\right\},
        \end{align*}
        where $a_2 = \tfrac{a}{1 - \eta_d}\left(\omega + (\gamma + \hat{\theta})\tfrac{1 + \eta_d}{\kappa_{HJ}}\right) + \tfrac{\hat{\omega}}{1 - \eta_d}$ and $\nu > 0$ such that $a_1 + \nu < 1$.
        \item For the expectation problem \eqref{eq:intro_stoch_error_obj}: For all $k \geq 0$, if \cref{ass:Variance_regularity} holds, the termination criterion parameters are chosen as $0 \leq \gamma_k \leq \tilde{\gamma} < \tfrac{1 - \eta_d}{1 + \eta_d}$, $\epsilon_k = \tilde{\omega}\sqrt{\tfrac{\Var(\nabla F(x_{k, 0}) | \Fcal_k)}{|S_k|}}$ where $\tilde{\omega} \geq 0$ and \cref{cond:eq_norm_test_d} parameters are chosen such that $\tilde{a}_1 = \tfrac{1}{1- \eta_d}\left[\tilde{\gamma} (1 + \eta_d) + \tilde{\theta} \left(\tilde{\omega} + \tfrac{1 + \eta_d}{\kappa_{HJ}}\left(\tfrac{\epsilon_G + \kappa_g}{\kappa_{\sigma}} + \tilde{\gamma}\right)\right) \right] < 1$, then, the expected true search direction norm converges at a linear rate across outer iterations, i.e., 
        \begin{align*}
            \Embb\left[\|d_{k, N_k}^{true}\| \right]
            \leq \max \{\tilde{a}_1 + \tilde{\nu}, \tilde{\beta}\}^{k+1} \max \left\{\|d_{0, 0}^{true}\|, \tfrac{\tilde{a}_2}{\tilde{\nu}}\right\},
        \end{align*}
        where $\tilde{a}_2 = \tfrac{\tilde{a}}{1 - \eta_d} \left[\tilde{\omega} + \tfrac{1 + \eta_d}{\kappa_{HJ}}\left(\tfrac{\epsilon_G + \kappa_g}{\kappa_{\sigma}} + \tilde{\gamma}\right) \right]$ and $\tilde{\nu} > 0$ such that $\tilde{a}_1 + \tilde{\nu} < 1$.
    \end{enumerate}
\end{theorem}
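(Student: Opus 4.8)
The plan is to mirror the argument of \cref{th:EQ_outer_iter_complexity} verbatim, simply replacing the KKT-error recursion of \cref{lem:main_error_bound} by the true-search-direction recursion of \cref{lem:step_norm_error_bound}, and \cref{cond:eq_norm_test} by \cref{cond:eq_norm_test_d}. For the finite-sum problem I would start from the deterministic bound in \cref{lem:step_norm_error_bound}, substitute $\epsilon_k = \omega\|\nabla f(x_{k,0}) - g_{S_k}(x_{k,0})\| + \hat\omega\beta^k$, and bound both gradient-error terms via \cref{cond:eq_norm_test_d} together with the subadditivity $\sqrt{\theta^2\|d_{k,0}^{true}\|^2 + a^2\beta^{2k}} \le \theta\|d_{k,0}^{true}\| + a\beta^k$. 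Because $x_{k,0} = x_{k-1,N_{k-1}}$, $H_{k,0} = H_{k-1,N_{k-1}}$ and $\lambda_{k,0} = \lambda_{k-1,N_{k-1}}$, one has $d_{k,0}^{true} = d_{k-1,N_{k-1}}^{true}$, so every term on the right collapses onto either $\|d_{k-1,N_{k-1}}^{true}\|$ or $\beta^k$.

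Collecting coefficients then yields the one-step contraction $\|d_{k,N_k}^{true}\| \le a_1\|d_{k-1,N_{k-1}}^{true}\| + a_2\beta^k$ with $a_1,a_2$ exactly as in the statement; the coefficient of $\|d_{k-1,N_{k-1}}^{true}\|$ reproduces $a_1 = \tfrac{1}{1-\eta_d}[\gamma(1+\eta_d) + \theta(\omega + (\gamma+\hat\theta)\tfrac{1+\eta_d}{\kappa_{HJ}})]$ and the coefficient of $\beta^k$ reproduces $a_2$, the extra weight $\tfrac{1+\eta_d}{1-\eta_d}\kappa_{HJ}^{-1}$ being inherited from \cref{lem:step_norm_error_bound}. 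Invoking \cref{lem:linear_convergence_induction} with $Z_k = \|d_{k,N_k}^{true}\|$, $\rho_1 = a_1$, $\rho_2 = \beta$, $b = a_2$ delivers the stated $\max\{a_1+\nu,\beta\}^{k+1}$ rate. I expect the only effort here to be bookkeeping the precise constants; there is no new conceptual ingredient beyond the two results.

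For the expectation problem I would take the conditional bound from \cref{lem:step_norm_error_bound}. The crucial identity is that for an i.i.d.\ sample set independent of $\Fcal_k$, $\Embb[\|\nabla f(x_{k,0}) - g_{S_k}(x_{k,0})\|^2 | \Fcal_k] = \Var(\nabla F(x_{k,0},\xi)|\Fcal_k)/|S_k|$, so Jensen's inequality and \cref{cond:eq_norm_test_d} give $\Embb[\|\nabla f(x_{k,0}) - g_{S_k}(x_{k,0})\| | \Fcal_k] \le \tilde\theta\|d_{k-1,N_{k-1}}^{true}\| + \tilde a\tilde\beta^k$, and the $\Fcal_k$-measurable choice $\epsilon_k = \tilde\omega\sqrt{\Var/|S_k|}$ satisfies $\Embb[\epsilon_k|\Fcal_k] = \epsilon_k$ and is bounded by $\tilde\omega$ times the same quantity. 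The biased end-of-loop term $\Embb[\|\nabla f(x_{k,N_k}) - g_{S_k}(x_{k,N_k})\| | \Fcal_k]$ is handled exactly as in \eqref{eq:biased_gradient_error_bound}: Cauchy--Schwarz, the CLT scaling of \cref{ass:Variance_regularity}, and the deterministic bound $\|\nabla f(x_{k,N_k})\| \le \kappa_g$ give $\kappa_G(\epsilon_G+\kappa_g)/\sqrt{|S_k|}$, after which the variance lower bound $\kappa_G \le \sqrt{\Var(\nabla F(x_{k,0},\xi)|\Fcal_k)}/\kappa_\sigma$ converts this into $\tfrac{\epsilon_G+\kappa_g}{\kappa_\sigma}$ times $\tilde\theta\|d_{k-1,N_{k-1}}^{true}\| + \tilde a\tilde\beta^k$.

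Collecting coefficients gives the conditional recursion $\Embb[\|d_{k,N_k}^{true}\| | \Fcal_k] \le \tilde a_1\|d_{k-1,N_{k-1}}^{true}\| + \tilde a_2\tilde\beta^k$ with the stated $\tilde a_1,\tilde a_2$; taking total expectation and applying \cref{lem:linear_convergence_induction} with $\rho_1 = \tilde a_1$, $\rho_2 = \tilde\beta$, $b = \tilde a_2$ finishes the proof. The main obstacle, and the only place where the step-norm criterion differs materially from the KKT criterion, is routing the three distinct error contributions (the tolerance $\epsilon_k$, the biased error at $x_{k,N_k}$, and the unbiased error at $x_{k,0}$) through the single surrogate $\Var(\nabla F(x_{k,0},\xi)|\Fcal_k)/|S_k|$ so that each collapses onto $\|d_{k-1,N_{k-1}}^{true}\|$ and $\tilde\beta^k$ with the correct weight; the variance lower bound of \cref{ass:Variance_regularity} is precisely what makes the biased term scale correctly against $\|d_{k-1,N_{k-1}}^{true}\|$.
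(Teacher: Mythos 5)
Your proposal is correct and follows essentially the same route as the paper's proof (given in \cref{appendix:proofs}): start from the recursion in \cref{lem:step_norm_error_bound}, convert every error term onto $\|d_{k-1,N_{k-1}}^{true}\|$ and $\beta^k$ (resp. $\tilde{\beta}^k$) via \cref{cond:eq_norm_test_d} with the subadditivity of the square root and the identity $d_{k,0}^{true}=d_{k-1,N_{k-1}}^{true}$, handle the biased end-of-loop term through \eqref{eq:biased_gradient_error_bound}, the CLT scaling, and the variance lower bound of \cref{ass:Variance_regularity}, and close with \cref{lem:linear_convergence_induction}. The constants $a_1,a_2,\tilde{a}_1,\tilde{a}_2$ you describe match the paper's, so nothing is missing.
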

\begin{proof}
    The proof follows the same procedure as the proof of \cref{th:EQ_outer_iter_complexity}, with adjustments to account for the changes in \cref{lem:step_norm_error_bound} relative to \cref{lem:main_error_bound}.
    Specifically, it incorporates the scaling factor $\kappa_{HJ}^{-1} \left(\tfrac{1 + \eta_d}{1 - \eta_d}\right)$ and the condition ${\gamma_k} < \tfrac{1 - \eta_d}{1 + \eta_d}$. 
    The complete proof is presented in \cref{appendix:proofs}.
\end{proof}
Using the same approach, we can also establish a result analogous to \cref{cor:Eq_geometric_batch_increase_outer_iter_complexity} for the case of a geometrically increasing batch size sequence. This variant removes several of the requirements imposed in \cref{th:EQ_outer_iter_complexity_step_norm}, while still guaranteeing convergence under milder conditions.

We now analyze the termination criterion \eqref{eq:termination_criterion_delta_l}, employing the merit function model decrease. We establish convergence under this criterion by showing that it is equivalent, up to constant factors, to the search direction norm-based termination criterion \eqref{eq:termination_criterion_d}.

\begin{lemma}
    Suppose \cref{ass:SQP_eq_assumptions} holds. Then, there exists $\hat{\gamma} < 1$ such that termination criterion \eqref{eq:termination_criterion_delta_l} with $\{\gamma_k\} < \hat{\gamma}$ satisfies termination criterion \eqref{eq:termination_criterion_d} for all $k\geq 0$ in \cref{alg:Equality_Constrained_RA_SQP}.
\end{lemma}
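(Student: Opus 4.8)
The plan is to show that the merit‑model reduction $\Delta l_{S_k}$ is bounded below by a fixed positive multiple of the squared search‑direction norm, and then to exploit the $\min\{\cdot,\kappa_d\|d_{k,0}\|^2\}$ already present on the right‑hand side of \eqref{eq:termination_criterion_delta_l} so that no matching \emph{upper} bound on $\Delta l_{S_k}$ is needed. Concretely, I would first establish that there is a constant $c_1>0$, independent of $k$ and $j$, with $\Delta l_{S_k}(x_{k,j},\tau_{k,j},d_{k,j}) \ge c_1\|d_{k,j}\|^2$ for every inner iterate. This follows from the merit‑parameter rule \eqref{eq:eq_merit_update}: since $\tau_{k,j}\le\tau^{trial}_{k,j}$ always holds, rearranging the definition of $\tau^{trial}_{k,j}$ and substituting into \eqref{eq:eq_delta_l} gives $\Delta l_{S_k}(x_{k,j},\tau_{k,j},d_{k,j}) \ge \tau_{k,j}\,\epsilon_d\|d_{k,j}\|^2 + \epsilon_\sigma(\|c_{k,j}\|_1-\|r_{k,j}\|_1)$, and the feasibility term is nonnegative because the inexactness conditions \eqref{eq:eq_inexactness_condition_1}–\eqref{eq:eq_inexactness_condition_2} force $\|r_{k,j}\|_1\le\|c_{k,j}\|_1$. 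Taking $c_1=\tau_{\min}\epsilon_d$ then yields the bound.

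The second step is the comparison itself. Suppose the inner loop of outer iteration $k$ stops because \eqref{eq:termination_criterion_delta_l} holds. Bounding the right‑hand side by its $\kappa_d\|d_{k,0}\|^2$ branch and the left‑hand side below by $c_1\|d_{k,N_k}\|^2$ gives $c_1\|d_{k,N_k}\|^2 \le \gamma_k\kappa_d\|d_{k,0}\|^2 + \epsilon_k$. Dividing by $c_1$ and using $\sqrt{a+b}\le\sqrt a+\sqrt b$ produces $\|d_{k,N_k}\| \le \sqrt{\gamma_k\kappa_d/c_1}\,\|d_{k,0}\| + \sqrt{\epsilon_k/c_1}$, which is exactly an instance of \eqref{eq:termination_criterion_d} with the derived parameters $\gamma_k' = \sqrt{\gamma_k\kappa_d/c_1}$ and $\epsilon_k' = \sqrt{\epsilon_k/c_1}$. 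Choosing $\hat\gamma$ so that $\gamma_k<\hat\gamma$ forces $\gamma_k'$ below the threshold required for the step‑norm analysis, i.e.\ $\gamma_k'<1$ in the exact case, or $\gamma_k'<(1-\eta_d)/(1+\eta_d)$ from \cref{lem:error_inexact_d} in the inexact case, completes the argument; explicitly $\hat\gamma = (c_1/\kappa_d)\big((1-\eta_d)/(1+\eta_d)\big)^2$ (capped below $1$) suffices. Note that $\epsilon_k'\to0$ whenever $\epsilon_k\to0$, so the resulting tolerance sequence remains admissible for the theory built on \eqref{eq:termination_criterion_d}.

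The main obstacle is establishing the uniform positive lower bound $\tau_{\min}$ on the merit parameter $\{\tau_{k,j}\}$ that underlies $c_1$. Within a single inner loop $\{\tau_{k,j}\}_j$ is nonincreasing by \eqref{eq:eq_merit_update}, but the parameter is reset to $\bar\tau$ at the start of every outer iteration, so I must bound $\tau^{trial}_{k,j}$ below uniformly over all $k,j$. This is where \cref{ass:SQP_eq_assumptions} is used in full: the bounds $\|H_{k,j}\|\le\kappa_H$, $\|J_{k,j}\|\le\kappa_J$, $\|\nabla f(x_{k,j})\|\le\kappa_g$, the uniform invertibility of the KKT matrix, and the uniform positive definiteness of $H_{k,j}$ on $\ker J_{k,j}$ together bound the numerator $\|c_{k,j}\|_1-\|r_{k,j}\|_1$ below and the denominator $g_{S_k}(x_{k,j})^Td_{k,j}+\max\{d_{k,j}^TH_{k,j}d_{k,j},\epsilon_d\|d_{k,j}\|^2\}$ above by commensurate quantities, yielding a $k,j$‑independent $\tau_{\min}>0$ (the standard deterministic inexact‑\texttt{SQP} argument of \cite{byrd2008inexact,berahas2021sequential}). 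Care is also needed because the left‑ and right‑hand sides of \eqref{eq:termination_criterion_delta_l} use different merit parameters, $\tau_{k,N_k}$ and $\tau_{k,0}$; the lower bound on the left‑hand side uses only $\tau_{k,N_k}\ge\tau_{\min}$, so the mismatch is harmless, and the $\min$ with $\kappa_d\|d_{k,0}\|^2$ is precisely what removes any need to control $\Delta l_{S_k}(x_{k,0},\tau_{k,0},d_{k,0})$ from above.
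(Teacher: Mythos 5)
Your proposal is correct and follows essentially the same route as the paper's proof: lower-bound the merit-model decrease by a constant multiple of $\|d_{k,N_k}\|^2$ (relying on a uniform lower bound for the merit parameter), take the $\kappa_d\|d_{k,0}\|^2$ branch of the $\min$ in \eqref{eq:termination_criterion_delta_l}, take square roots, and shrink $\hat{\gamma}$ to meet the threshold needed for \eqref{eq:termination_criterion_d}. The only difference is that the paper obtains the key inequality by citing \cite[Lemma 4.11]{berahas2022adaptive} together with \cite[Lemma 4.7]{byrd2008inexact}, whereas you re-derive it inline from the merit-parameter update rule and the inexactness conditions.
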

\begin{proof}
    Under termination criterion \eqref{eq:termination_criterion_delta_l},
    \begin{align*}
        \|d_{k, N_k}\|^2 &\leq \kappa_{d, \Delta l} \Delta l_{S_k} (x_{k, N_k}, \tau_{k, N_k}, d_{k,N_k}) \\
        &\leq \kappa_{d, \Delta l} \gamma_k \min\{\Delta l_{S_k} (x_{k, 0}, \tau_{k, 0}, d_{k,0}), \kappa_d \|d_{k, 0}\|^2\} + \kappa_{d, \Delta l}  \epsilon_k \\
        &\leq \kappa_{d, \Delta l} \gamma_k \kappa_d \|d_{k, 0}\|^2 + \kappa_{d, \Delta l} \epsilon_k,
    \end{align*}
    where the first inequality follows from \cite[Lemma 4.11]{berahas2022adaptive} with $\kappa_{d, \Delta l} > 0$, since, by \cref{lem:error_inexact_d} and \cref{ass:SQP_eq_assumptions}, the error $\|d_{k, N_k} - \hat{d}_{k, N_k}\|$ is bounded and from \cite[Lemma 4.7]{byrd2008inexact}, the merit parameter is bounded below. Therefore, termination criterion \eqref{eq:termination_criterion_d} is satisfied with parameters $(\sqrt{\kappa_{d, \Delta l} \gamma_k \kappa_d}, \sqrt{\kappa_{d, \Delta l} \epsilon_k})$, which satisfy the conditions for convergence if $\gamma_k$ is sufficiently small.
\end{proof}

Thus, we have established the convergence of \cref{alg:Equality_Constrained_RA_SQP} under termination criterion \eqref{eq:termination_criterion_delta_l}. Similarly, \cref{cond:eq_norm_test_d} can be adapted for this termination criterion \eqref{eq:termination_criterion_delta_l}. We first define $\tau_{k, 0}^{true}$ as the merit parameter obtained in the first inner iteration of outer iteration $k$ if the true problem gradient and the true search direction $d_{k, 0}^{true}$ are used for the update. We also define the true problem merit function model decrease as
\begin{align*}
    \Delta l (x_{k, 0}, \tau_{k, 0}^{true}, d_{k,0}^{true}) &= -\tau_{k, 0}^{true} \nabla f(x_{k, 0})^Td_{k,0}^{true} + \|c_{k, 0}\|_1 - \|c_{k, 0} + J_{k, 0}d_{k, 0}^{true}\|_1.
\end{align*}

Next, we present the sampling condition to accompany termination criterion \eqref{eq:termination_criterion_delta_l}.
\begin{condition} \label{cond:eq_norm_test_delta_l}
    At the beginning of outer iteration $k \geq 0$ in \cref{alg:Equality_Constrained_RA_SQP} with termination criterion \eqref{eq:termination_criterion_delta_l}, the batch size $|S_k|$ is selected such that:
    \begin{enumerate}
        \item For the finite-sum problem \eqref{eq:intro_deter_error_obj}: With constants $\theta, \hat{\theta}, a \geq 0$ and $\beta \in (0, 1)$, 
        \begin{align*}
            \|\nabla f(x_{k, 0}) - g_{S_k}(x_{k, 0})\|^2 &\leq \theta^2 \min\{\Delta l (x_{k, 0}, \tau_{k, 0}^{true}, d_{k, 0}^{true}), \kappa_d \|d_{k, 0}^{true}\|^2\} + a^2 \beta^{2k},\\
            \|\nabla f(x_{k, N_k}) - g_{S_k}(x_{k, N_k})\|^2 &\leq \hat{\theta}^2 \left(\theta^2 \min\{\Delta l (x_{k, 0}, \tau_{k, 0}^{true}, d_{k, 0}^{true}), \kappa_d \|d_{k, 0}^{true}\|^2\} + a^2 \beta^{2k}\right).
        \end{align*}
        \item For the expectation problem \eqref{eq:intro_stoch_error_obj}: With constants $\tilde{\theta}, \tilde{a} \geq 0$ and $\tilde{\beta} \in (0, 1)$, 
        \begin{align*}
            \Embb\left[\|\nabla f(x_{k, 0}) - g_{S_k}(x_{k, 0})\|^2 | \Fcal_k\right] \leq \tilde{\theta}^2 \min\{\Delta l (x_{k, 0}, \tau_{k, 0}^{true}, d_{k, 0}^{true}), \kappa_d \|d_{k, 0}^{true}\|^2\} + \tilde{a}^2 \tilde{\beta}^{2k}.
        \end{align*}
    \end{enumerate}
\end{condition}
It is straightforward to observe that satisfying \cref{cond:eq_norm_test_delta_l} also implies \cref{cond:eq_norm_test_d}. If the parameter $\theta$ (and $\tilde{\theta}$) in \cref{cond:eq_norm_test_delta_l} is chosen to be sufficiently small so as to meet the requirements in \cref{th:EQ_outer_iter_complexity_step_norm}, then linear convergence across outer iterations can be achieved while using the termination criterion \eqref{eq:termination_criterion_delta_l}, under \cref{cond:eq_norm_test_delta_l}.

We established the convergence properties of \cref{alg:Equality_Constrained_RA_SQP} under the alternate termination criterion \eqref{eq:termination_criterion_d} and \eqref{eq:termination_criterion_delta_l} with inexact solutions to the \texttt{SQP} linear system \eqref{eq:eq_sqp_system} that satisfy either \eqref{eq:eq_inexactness_condition_1} or \eqref{eq:eq_inexactness_condition_2}. 
When the \texttt{SQP} linear system is solved exactly, $\eta_d = 0$ in \cref{lem:error_inexact_d} reducing the conditions on parameters under the alternate termination criterion. 
The results presented in this section can be derived for termination criteria based on other optimality metics, provided an error bound analogous to \cref{lem:main_error_bound} can be established.
\section{Inequality Constrained Problems} \label{sec:Inequality_constrained}

In this section, we present our proposed 
method for solving problem \eqref{eq:intro_problem} with general nonlinear equality and inequality constraints. 
We begin by describing the challenges introduced by the presence of general nonlinear inequality constraints and highlighting the differences from the framework proposed in \cref{sec:Equality_constrained}.
We describe the proposed algorithm in \cref{sec:Inequality_constrained_algorithm} and provide theoretical guarantees in \cref{sec:Inquality_constrained_analysis}.

As mentioned in Section~\ref{sec:Introduction}, $x^*$ is a first-order stationary point for problem \eqref{eq:intro_problem} with general nonlinear equality and inequality constraints if there exist $\lambda^*_E \in \mathbb{R}^{m_E}$ and $\lambda^*_I \in \mathbb{R}^{m_I}$ such that,
\begin{equation*}
    \nabla_x \Lcal (x^*, \lambda^*_E, \lambda^*_I) = 0, \quad
    c_E(x^*) = 0, \quad c_I(x^*) \leq 0, \quad c_I(x^*) \odot \lambda^*_I = 0, \quad \lambda^*_I \geq 0.
\end{equation*}
While one could extend the general framework from \cref{sec:General_Deterministic_Solver_alg} by incorporating the complementary slackness error from \eqref{eq:KKT_conditions} into the termination criterion \eqref{eq:termination_criterion_eq}, instead we directly present a solver-specific termination criterion due to the various challenges associated with the general constrained setting.
First, incorporating the complementary slackness error would add a third term in termination criterion \eqref{eq:termination_criterion_eq}, exacerbating the scaling issue discussed in \cref{sec:SQP_based_algorithm_eq}.
Second, evaluating the termination criterion would require good dual variable estimates for the subsampled problem, something not readily provided or updated by deterministic methods when dealing with general nonlinear constraints.
Unlike \cref{sec:Equality_constrained}, if the deterministic method in the inner loop does not provide good dual variable estimates, computing them requires solving a linear or quadratic program due to the presence of the complimentary slackness conditions in \eqref{eq:KKT_conditions}, which is significantly more expensive than solving the least squares problem in \cref{sec:Equality_constrained}.
Thus, to address these challenges, in order to establish theoretical results and achieve good empirical performance, we propose a framework tailored to a deterministic solver with a termination criterion that leverages solver-specific quantities. Given our goal of handling general nonlinear constraints, we adopt an \texttt{SQP}-based deterministic algorithm for the inner loop.

An important consideration when \texttt{SQP}-based methods are used for general nonlinear constraints are infeasible stationary points. The convergence of \texttt{SQP}-based methods towards a feasible solution can be characterized as reaching a stationary point of an optimization problem that minimizes the constraint violation, i.e., $\min_{x \in \Rmbb^n} \frac{1}{2} \|c_E(x)\|^2 + \frac{1}{2} \|[c_I(x)]_+\|^2$. In the case of only equality constraints, LICQ conditions ensure that every stationary point to this problem is feasible. However, no such guarantees exist for general nonlinear constraints. If such a stationary point is reached and found to be infeasible, termination is necessary, returning an infeasible stationary point. The precise definition of this concept varies by algorithm and is provided later.

When solving a problem with general nonlinear constraints using \texttt{SQP} methods, one must solve subproblem \eqref{eq:SQP_quad_model} at each iteration. This subproblem is more computationally expensive than the \texttt{SQP} linear system \eqref{eq:eq_sqp_system} in the equality constraint only setting, and its feasibility is not guaranteed even when the original problem is feasible, as demonstrated in \cite[Example 19.2]{nocedal2006numerical}. Few works exist for the general constrained stochastic setting that handle this issue.
Among these, \cite{na2023inequality} bypassed the issue of infeasibility by assuming that every \texttt{SQP} subproblem \eqref{eq:SQP_quad_model} is feasible, and,  subsequent works have tackled this challenge using techniques such as step decomposition \cite{curtis2023sequential} and robust subproblems \cite{qiu2023sequential}, which ensure subproblem feasibility while identifying infeasible stationary points. The latter two are techniques adapted from determninistic constrained optimization.
Naturally, we wish to incorporate such \texttt{SQP} methods within the \texttt{RA} framework. We use \texttt{SQP} methods with robust subproblems \cite{burke1989robust,omojokun1989trust,qiu2023sequential}, instead of a slack variable reformulation of the constraints, i.e., $s \in \Rmbb^{m_I}$, $c_I(x) + s = 0$ and $s \geq 0$, as is common, e.g., \cite{nocedal2006numerical,curtis2023sequential,powell2006fast}. Within the \texttt{RA} framework, we observe that the slack variable reformulation incurs significant computational costs due to the need to update slack variables and track the active set, even after the inner loop achieves sufficient optimality for termination. In contrast, robust-\texttt{SQP} methods eliminate these inefficiencies while preserving strong convergence properties.

\subsection{Algorithm Description} \label{sec:Inequality_constrained_algorithm}
In this section, we describe the proposed 
method for solving problems with stochastic objective functions and general nonlinear constraints based on \cref{alg:RA_base} and the robust-\texttt{SQP} method. Specifically, we adopt the robust-\texttt{SQP} algorithm \cite{burke1989robust} that uses a two-step approach. The two-step approach first minimizes the linearized constraint violation and then proceeds to compute a search direction that minimizes a quadratic approximation of the objective function while maintaining the constraint violation achieved in the first step. This ensures the subproblems for the search direction calculation are feasible, contrary to subproblem \eqref{eq:SQP_quad_model}, and are able to detect infeasible stationary points via the constraint minimization problem in the first step. 
We now detail the inner loop update mechanism and the termination criterion for the inner loop. 
We adopt notation similar to that in \cref{sec:Equality_constrained}, where $x_{k, j}$ denotes the iterate at the outer iteration $k$ and inner iteration $j$. 

In outer iteration $k$, a set of samples $S_k$ is obtained to form the subsampled problem \eqref{eq:subsampled_problem}. To update the iterate in inner iteration $j$, we first compute a search direction $p_{k, j}$ that minimizes the linearized constraint violation, based on a chosen measure of constraint violation. This violation can be measured using either the $l_{\infty}$ or the $l_1$ norm, resulting in the following linear programs ($l_\infty$ on the left and $l_1$ on the right),
\begin{equation}
\begin{array}{ccc}
\begin{aligned}
    \min_{\substack{y \in \Rmbb, \\ p_{k, j} \in \Rmbb^n}} &y \\
    s.t. \,\, &c_E(x_{k,j}) + \nabla c_E^T(x_{k, j}) p_{k, j} \geq -ye_{m_E} \\
    &c_E(x_{k,j}) + \nabla c_E^T(x_{k, j}) p_{k, j} \leq y e_{m_E}  \\
    &c_I(x_{k, j}) + \nabla c_I^T(x_{k, j}) p_{k, j} \leq y e_{m_I} \\
    &y \geq 0, \|p_{k, j}\|_{\infty} \leq \sigma_p
\end{aligned}
& \text{or} &
\begin{aligned}
    \min_{\substack{y_{E} \in \Rmbb^{m_E}, \\ y_{I} \in \Rmbb^{m_I}, \\ p_{k, j} \in \Rmbb^n}} &e_{m_E}^T y_E + e_{m_I}^T y_I\\
    s.t. \,\, &c_E(x_{k,j}) + \nabla c_E^T(x_{k, j}) p_{k, j}  \geq - y_E  \\
    &c_E(x_{k,j}) + \nabla c_E^T(x_{k, j}) p_{k, j}  \leq y_E  \\
    &c_I(x_{k, j}) + \nabla c_I^T(x_{k, j}) p_{k, j} \leq y_I  \\
    &y_E \geq 0, y_I \geq 0, \|p_{k, j}\|_{1} \leq \sigma_p,
\end{aligned}
\end{array}
\label{eq:constraint_prob}
\end{equation}
where $y$ is the minimized linearized constraint violation when measured with the $l_\infty$ norm, ($y_E$, $y_I$) represent the minimized linearized constraint violation components when measured with the $l_1$ norm, and $\sigma_p$ is the bound on the norm of the search direction. The linear program corresponding to the $l_\infty$ norm involves fewer variables and constraints compared to the one based on the $l_1$ norm. However, the $l_1$ norm provides a more precise measure of constraint violation which can potentially lead to improved performance.
Although other norms can be used to measure constraint violation, they often lead to more challenging subproblems, for example, the Euclidean norm results in quadratically constrained quadratic programs (QCQPs). 
Thus, we restrict our focus to the $l_{\infty}$ and $l_1$ norms in this work.

Next, a quadratic program is solved to compute the search direction $d_{k,j}$, which minimizes a quadratic approximation of the objective function at $x_{k, j}$, while preserving the constraint violation achieved by $p_{k, j}$. The resulting quadratic programs ($l_\infty$ on the left and $l_1$ on the right) are
\begin{equation}
\begin{array}{ccc}
\begin{aligned}
    \min_{d_{k,j} \in \Rmbb^n} &g_{S_k}(x_{k, j})^T d_{k,j} + \tfrac{1}{2} d_{k,j}^T H_{k,j} d_{k,j} \\
    s.t. \,\, &c_E(x_{k,j}) + \nabla c_E^T(x_{k, j}) d_{k,j} \geq - y e_{m_E}  \\
    &c_E(x_{k,j}) + \nabla c_E^T(x_{k, j}) d_{k,j} \leq y e_{m_E}  \\
    &c_I(x_{k, j}) + \nabla c_I^T(x_{k, j}) d_{k,j} \leq y e_{m_I} \\
    &\|d_{k,j}\|_{\infty} \leq \sigma_d 
\end{aligned}
& \text{or} &
\begin{aligned}
    \min_{d_{k,j} \in \Rmbb^n} &g_{S_k}(x_{k, j})^T d_{k,j} + \tfrac{1}{2} d_{k,j}^T H_{k,j} d_{k,j} \\
    s.t. \,\,\, &c_E(x_{k, j}) + \nabla c_E^T(x_{k, j}) d_{k,j} \geq - y_E  \\
    &c_E(x_{k, j}) + \nabla c_E^T(x_{k, j}) d_{k,j} \leq y_E  \\
    &c_I(x_{k, j}) + \nabla c_I^T(x_{k, j}) d_{k,j} \leq y_I \\
    &\|d_{k,j}\|_{1} \leq \sigma_d,
\end{aligned}
\end{array}
\label{eq:opt_prob}
\end{equation}
where $y$, ($y_E,y_I$) are the outputs from solving \eqref{eq:constraint_prob}, $H_{k, j}$ is an approximation of the Hessian of the Lagrangian, and $\sigma_d$ is the bound on the norm of the search direction.
The bound constraint parameters $\sigma_p$ and $\sigma_d$ are user defined constants such that $0 < \sigma_p < \sigma_d < \infty$, ensuring that \eqref{eq:opt_prob} remains feasible. These finite bound constraints on the search direction are essential for the identification of infeasible stationary points and to establish convergence theory with $\{\|d_{k, j}\|\} \rightarrow 0$, as demonstrated in \cite[Example 2.1]{burke1989robust}. 
While $\sigma_p$ and $\sigma_d$ can be constants, in practice they are often set proportional to the constraint violation of the current iterate, following the approach in \cite{qiu2023sequential, curtis2010matrix}; further details are provided in \cref{sec:Numerical_experiments}. 

A merit function is employed to balance the two possibly competing goals of decreasing the objective function and reducing constraint violation. Depending on the norm used to measure the constraint violation, the merit function $\phi_{S_k} : \Rmbb^n \times \Rmbb \to \Rmbb$ is defined as ($l_\infty$ on left and $l_1$ on right),
\begin{equation} \label{eq:merit_function_ineq}
    \phi_{S_k} (x, \tau) = \tau F_{S_k}(x)  + \left\| \begin{matrix}
    c_E(x) \\ [c_I(x)]_+
    \end{matrix} \right\|_{\infty} 
    \quad \text{or,} \quad
    \phi_{S_k} (x, \tau) = \tau F_{S_k}(x)  + \left\| \begin{matrix}
    c_E(x) \\ [c_I(x)]_+
    \end{matrix} \right\|_{1} ,
\end{equation}
where $\tau > 0$ is the merit parameter. Note that in \eqref{eq:merit_function_ineq} the penalty parameter $\tau$ is on the objective function whereas in many robust-\texttt{SQP} methods \cite{burke1989robust,omojokun1989trust,qiu2023sequential} the penalty parameter $\rho$ is on the constraint violation term. 
The two forms are equivalent for appropriate choices of these parameters, and we prefer the former to maintain consistency with the merit function defined in \cref{sec:SQP_based_algorithm_eq} for the equality constrained setting.
To update the merit parameter, we define the predicted reduction in a linear model of the merit function $\Delta l_{S_k} : \Rmbb^n \times \Rmbb^n \times \Rmbb \to \Rmbb$ as,
\begin{equation} \label{eq:merit_function_model_ineq}
    \Delta l_{S_k} (x_{k, j}, d_{k, j}, \tau_{k,j}) = - \tau_{k,j} g_{S_k}(x_{k, j})^Td_{k, j} +  \Delta_c, 
\end{equation}
where $\Delta_c$ denotes the improvement in the linearized constraint violation, defined as ($l_\infty$ on the left and $l_1$ on the right),
\begin{align}
    \Delta_c = \left\| \begin{matrix}
    c_E(x_{k, j}) \\ [c_I(x_{k, j})]_+
    \end{matrix} \right\|_{\infty} 
    - y \quad \text{or,} \quad
    \Delta_c = \left\| \begin{matrix}
    c_E(x_{k, j}) \\ [c_I(x_{k, j})]_+
    \end{matrix} \right\|_{1} 
    - (e_{m_E}^T y_E + e_{m_I}^T y_I).  \label{eq:constraint_change}
\end{align}
The merit parameter is updated to ensure that the search direction $d_{k, j}$ is a descent direction for the merit function, similar to the approaches in \cite{curtis2025interior,qiu2023sequential}.
Given user-defined parameters $\epsilon_\sigma, \epsilon_{\tau} \in (0, 1)$, a trial value $\tau_{k,j}^{trial}$ is computed as,
\begin{align*}
\tau^{trial}_{k,j} =
\begin{cases}
    \infty & \text{if } g_{S_k}(x_{k, j})^Td_{k,j} + d_{k,j}^TH_{k,j}d_{k,j} \leq 0, \\
    \tfrac{(1 - \epsilon_\sigma) \Delta_c}{g_{S_k}(x_{k, j})^Td_{k,j} + d_{k,j}^TH_{k,j}d_{k,j}} & \text{otherwise,}
\end{cases}
\end{align*}
and then the merit parameter is updated via
\begin{align}  \label{eq:ineq_merit_update}
\tau_{k,j} =
\begin{cases}
    \tau_{k,j-1} & \text{if } \tau_{k,j-1} \leq \tau^{trial}_{k,j}, \\
    \min\left\{(1 - \epsilon_{\tau})\tau_{k, j}, \tau^{trial}_{k,j}\right\} & \text{otherwise}.
\end{cases}
\end{align}
This rule ensures that $\{\tau_{k,j}\}$ is a monotonically non-increasing sequence. The merit parameter is reinitialized to a user-defined value at the beginning of each inner loop. Finally, the step size $\alpha_{k,j}$ is selected to satisfy the 
sufficient decrease
condition on the merit function, i.e.,
\begin{equation} \label{eq:line_search_ineq}
    \phi_{S_k} (x_{k, j} + \alpha_{k, j} d_{k, j}, \tau_j) \leq \phi_{S_k} (x_{k, j}, \tau_{k, j}) + \eta\alpha_{k, j} \Delta l_{S_k} (x_{k, j}, d_{k, j}, \tau_{k, j})
\end{equation}
where $\eta \in (0, 1)$ is a user defined line search parameter. 

Within an outer iteration $k$, under appropriate constraint qualifications, the search direction $d_{k, j}$ goes to zero, and the algorithm either approaches a KKT point for the subsampled problem or terminates at an infeasible stationary point \cite{burke1989robust,qiu2023sequential}.
An infeasible stationary point is infeasible to the original problem, and a stationary point to the constraint minimization problem ($l_\infty$ on the left and $l_1$ on the right)
\begin{equation} \label{eq:inf_stat_point}
    \min_{x \in \Rmbb^n} \left\|\begin{matrix}
        c_E(x) \\ [c_{I}(x)]_{+}
    \end{matrix}\right\|_{\infty} 
    \text{or,} \quad
    \min_{x \in \Rmbb^n} \left\|\begin{matrix}
        c_E(x) \\ [c_{I}(x)]_{+}
    \end{matrix}\right\|_{1}.
\end{equation}
If the search direction $p_{k, j}$ is zero while the constraints are violated, an infeasible stationary point has been reached, and both the inner and outer loops can be terminated, since the constraints in the subsampled problem are deterministic. Otherwise, in outer iteration $k$, the inner loop terminates when the termination criterion $\Tcal_k$ from \cref{alg:RA_base} (Line \ref{RA_base:termination}) is satisfied, defined as, 
\begin{equation} \label{eq:termination_criterion_ineq}
    \Tcal_k : \|d_{k, j}\| \leq \gamma_k \|d_{k, 0}\| + \epsilon_k.
\end{equation}
The algorithm is summarized in \cref{alg:Inequality_Constrained_RA_SQP} and the following remark.

Another possible measure of solution quality for the termination criterion is the merit function model reduction $\Delta l_{S_k}$. 
However, since it includes the positive part of the inequality constraints and the active set may change significantly within the inner loop due to possible drastic variations in the objective function across outer iterations, employing $\Delta l_{S_k}$ leads to instability and thus it is not used.
Moreover, one can use any other deterministic method in the inner loop but then the measure of optimality for termination must be appropriately chosen.

\begin{algorithm}[H]
    \caption{Inequality Constrained \texttt{RA}-\texttt{SQP}}
    \textbf{Inputs:} Sequences $\{|S_k|\}$, $\{\gamma_k\} < 1$ and $\{\epsilon_k\} \geq 0$; initial iterate $x_{0,0}$; initial merit parameter sequence $\{\tau_{k,-1}\} = \bar{\tau} > 0$; other parameters $ \epsilon_{\sigma}, \epsilon_{\tau}, \epsilon_{\alpha} \in (0, 1)$, $\sigma_p, \sigma_d > 0$.
\    \begin{algorithmic}[1]
        \For{$k = 0, 1, 2, \dots$}
            \State Construct subsampled problem \eqref{eq:subsampled_problem} from sample set $S_k$ \label{Ineq:subsample}
            \For{$j = 0, 1, 2, \dots$}
                \State Solve \eqref{eq:constraint_prob} and obtain $p_{k, j}$ and $y$ (or $y_E$ and $y_I$) \label{Ineq:p_cal}
                \State \textbf{if} $\|p_{k, j}\| = 0$ and $y > 0$ (or either $\|y_E\| > 0$ or $\|y_I\| > 0$) \label{Ineq:inf_Stat_check}
                \State \hskip1.0em  \textbf{return} $x_{k, j}$ as an infeasible stationary point
                \State Obtain approximation of Hessian of Lagrangian $H_{k, j}$ \label{Ineq:H}
                \State Solve \eqref{eq:opt_prob} and obtain $d_{k, j}$ \label{Ineq:d_cal}
                \State \textbf{if} \eqref{eq:termination_criterion_ineq} is satisfied; $N_k = j$, \textbf{break} \label{Ineq:term_test}
                \State Update merit parameter via \eqref{eq:ineq_merit_update} \label{Ineq:tau_update}
                \State Set $\alpha_{k,j} = 1$ \label{Ineq:line_search_start}
                \While{\eqref{eq:line_search_ineq} is \textbf{not satisfied}}
                    \State Update $\alpha_{k,j} = \epsilon_{\alpha} \alpha_{k,j}$
                \EndWhile 
                \State Set $x_{k, j + 1} = x_{k, j} + \alpha_{k, j} d_{k,j}$ \label{Ineq:update}
            \EndFor
            \State Set $x_{k+1, 0} = x_{k, N_k}$
        \EndFor
    \end{algorithmic}
    \label{alg:Inequality_Constrained_RA_SQP}
\end{algorithm}
\begin{remark}
    We make the following remarks about \cref{alg:Inequality_Constrained_RA_SQP}.
    \begin{enumerate}
        \item \textbf{Outer Iteration (Line \ref{Ineq:subsample}):} A set of samples $S_k$ is obtained to construct the subsampled problem \eqref{eq:subsampled_problem}. The batch size is a prespecified user-defined sequence and an adaptive strategy is described later.
        \item \textbf{\texttt{SQP} subproblem (Lines \ref{Ineq:p_cal}-\ref{Ineq:d_cal}):} 
        The search direction $d_{k, j}$ is computed via a two-step process. First, a search direction $p_{k, j}$ is computed via \eqref{eq:constraint_prob} that minimizes the linearized constraint violation. If $\|p_{k, j}\| = 0$ and $x_{k, j}$ is infeasible to the original problem, the algorithm terminates and returns $x_{k, j}$ as an infeasible stationary point. 
        Then, the search direction $d_{k, j}$ is computed via \eqref{eq:constraint_prob} that minimizes a quadratic approximation of the objective function while achieving linearized constraint violation at least as good as the linearized constraint violation achieved by $p_{k, j}$.
        The formulations used to compute $p_{k, j}$ and $d_{k, j}$ depend on the choice of constraint violation measure ($l_1$ or $l_\infty$ norm).
                
        \item \textbf{Termination Test (Line \ref{Ineq:term_test}):} The inner loop is terminated when the search direction norm $\|d_{k, j}\|$ falls below a certain tolerance \eqref{eq:termination_criterion_ineq}.
        Thus, one extra \texttt{SQP} subproblem is solved in the inner loop than the number of iterate updates, as $\|d_{k, j}\|$ is required to evaluate the termination criterion \eqref{eq:termination_criterion_ineq}.

        \item \textbf{Merit Parameter and Line Search (Lines \ref{Ineq:tau_update}-\ref{Ineq:update}):} The mechanism for updating the merit parameter and performing the line search to obtain the step size follows standard procedures from \cite{burke1989robust, nocedal2006numerical, qiu2023sequential, curtis2025interior}.
        Since the objective function can change drastically across outer iterations, similar to \cref{alg:Equality_Constrained_RA_SQP}, the merit parameter is reinitialized at the start of each outer iteration to the user-specified value $\bar{\tau}$ similar to \cref{alg:Equality_Constrained_RA_SQP}.
    \end{enumerate}
\end{remark}

\subsection{Convergence Analysis} \label{sec:Inquality_constrained_analysis}
In this section, we establish convergence guarantees for \cref{alg:Inequality_Constrained_RA_SQP} for problems with general nonlinear constraints \eqref{eq:intro_problem}. 
We first demonstrate the well-posedness of \cref{alg:Inequality_Constrained_RA_SQP} and establish conditions for convergence, and then present the adaptive strategy for batch size selection and complexity results.

To begin, we borrow a few definitions from \cref{sec:General_Deterministic_Solver_theory}. We use \cref{ass:gradient_errors} and error metric $G_S$ from \eqref{eq:G_metric_def} to characterize the error in the subsampled gradients, and define $\{\Fcal_k\}$ as the same sequence of $\sigma$-algebras with $\Fcal_0 = \{x_{0, 0}\}$, i.e., without the dual variable.
Next, we introduce the assumptions on \eqref{eq:intro_problem}. These assumptions are necessary to establish the well-posedness of \cref{alg:Inequality_Constrained_RA_SQP} that uses robust-\texttt{SQP} subproblems in the inner loop.

\begin{assumption}  \label{ass:SQP_ineq_assumptions}
    Let $\chi \subset \Rmbb^n$ be a closed bounded convex set containing the sequence of iterates generated by any run of \cref{alg:Inequality_Constrained_RA_SQP}. 
    The objective function $f : \Rmbb^n \rightarrow \Rmbb$ is continuously differentiable and its gradient function $\nabla f : \Rmbb^n \rightarrow \Rmbb^n$ is Lipschitz continuous over $\chi$.
    The subsampled objective function $F_{S_k} : \Rmbb^n \rightarrow \Rmbb$ for all $k \geq 0$ is continuously differentiable with Lipschitz continuous gradients over $\chi$.
    The constraint functions $c_E : \Rmbb^n \rightarrow \Rmbb^{m_E}$ and  $c_I : \Rmbb^n \rightarrow \Rmbb^{m_I}$ are continuously differentiable. The Jacobian functions, $\nabla c_E^T : \Rmbb^n \rightarrow \Rmbb^{m_E \times n}$ and $\nabla c_I^T : \Rmbb^n \rightarrow \Rmbb^{m_I \times n}$ are Lipschitz continuous over $\chi$. 
    The sequence of Hessian approximation matrices $\{H_{k, j}\} \in \Rmbb^{n \times n}$ are symmetric positive definite matrices with $\kappa_H \geq \mu_H > 0$, such that, $\mu_H I_n \preceq H_{k, j} \preceq \kappa_H I_n$.
\end{assumption}

While assuming a closed bounded set of iterates in \cref{ass:SQP_eq_assumptions} was a strong condition for problems with equality constraints, similar conditions (\cref{ass:SQP_ineq_assumptions}) are commonly required in both deterministic \cite{burke1989robust} and stochastic optimization \cite{na2023inequality,qiu2023sequential} with inequality constraints, thereby justifying their use in this scenario. 
However, we also need the subsampled problems to be smooth as it is required by the robust \texttt{SQP} method \cite[Assumption 2.2]{qiu2023sequential} and the \texttt{RA} framework treats each subsampled problem as a deterministic problem, similar to \cref{ass:SQP_eq_assumptions}.
These conditions can be relaxed if permitted by the inner loop deterministic solver employed to establish convergence under conditions similar to \cref{sec:General_Deterministic_Solver_theory}.
The conditions on the approximation of the Hessian of the Lagrangian can also be relaxed to positive definiteness over the null space of the active set. Under \cref{ass:SQP_ineq_assumptions}, there exists $\kappa_g \geq 0$ such that ${\|\nabla f(x)\|} \leq \kappa_g$ for all $x \in \chi$, and \cref{ass:Bounded_outer_gradient_strong} is satisfied.

\begin{assumption} \label{ass:MFCQ_ineq}
    The extended MFCQ hold at all points $x \in \chi$, i.e.,
    \begin{enumerate}
        \item the Jacobian for the equality constraints is full row rank, i.e., $rank(\nabla c_E(x)) = m_E$,
        \item there exists $u \in \Rmbb^n$ such that $\nabla c_E(x)^T u = 0$ and $[\nabla c_I(x)^T]_i u > 0$ for all $i \in \{i : [c_I(x)]_i \geq 0\}$.
    \end{enumerate}
\end{assumption}

The constraint qualification in \cref{ass:MFCQ_ineq} is an extension of the MFCQ (Mangasarian-Fromovitz constraint qualification), also found in \cite{burke1989robust, qiu2023sequential}, that incorporates infeasible points into the constraint qualification. These conditions are necessary to ensure the boundedness of the merit parameter ($\{\tau_{k, j}\} > 0$), thereby guaranteeing convergence to either a first-order stationary point or an infeasible stationary point. Under the stated assumptions, we now establish the well-posedness of \cref{alg:Inequality_Constrained_RA_SQP}.

\begin{theorem} \label{th:Well_posed_ineq}
    Suppose Assumptions \ref{ass:SQP_ineq_assumptions} and \ref{ass:MFCQ_ineq} hold. Then, for each outer iteration $k\geq0$ in \cref{alg:Inequality_Constrained_RA_SQP}, $\{d_{k, j}\} \rightarrow 0$ and the inner loop terminates with either an infeasible stationary point or upon satisfying termination criterion \eqref{eq:termination_criterion_ineq}. 
\end{theorem}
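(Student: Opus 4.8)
The plan is to observe that within a single outer iteration $k$ the sample set $S_k$ is held fixed, so the inner loop is precisely the deterministic robust-\texttt{SQP} method of \cite{burke1989robust, qiu2023sequential} applied to the fixed smooth subsampled problem \eqref{eq:subsampled_problem} with objective $F_{S_k}$. The theorem then reduces to verifying that the hypotheses of the deterministic convergence theory hold for this subsampled problem and that $\{d_{k,j}\}\to 0$ forces finite termination of the inner loop. I would fix $k$ throughout and suppress it where convenient.

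First I would establish that all subproblems are well posed. The constraint linear program \eqref{eq:constraint_prob} is always feasible: taking $p=0$ with $y$ (resp.\ $y_E,y_I$) equal to the current linearized constraint violation satisfies every constraint. Its solution yields $p_{k,j}$ with $\|p_{k,j}\|\le\sigma_p$. Because $0<\sigma_p<\sigma_d$, this same $p_{k,j}$ is feasible for the objective quadratic program \eqref{eq:opt_prob}, so \eqref{eq:opt_prob} is feasible; since $H_{k,j}\succ 0$ by \cref{ass:SQP_ineq_assumptions}, \eqref{eq:opt_prob} is a strictly convex QP with a unique minimizer $d_{k,j}$. The infeasible-stationary-point test in Line~\ref{Ineq:inf_Stat_check} is therefore well defined: if $\|p_{k,j}\|=0$ while the measured constraint violation is positive, then $x_{k,j}$ is a stationary point of the constraint-minimization problem \eqref{eq:inf_stat_point} and the loop exits with an infeasible stationary point.

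Next I would establish the sufficient-decrease machinery that drives $\{d_{k,j}\}\to 0$. The crucial step, and the one I expect to be the main obstacle, is showing that the merit parameter sequence $\{\tau_{k,j}\}$ generated by \eqref{eq:ineq_merit_update} is bounded below away from zero. This is exactly where the extended MFCQ of \cref{ass:MFCQ_ineq} enters: it guarantees a uniform lower bound on the linearized constraint improvement $\Delta_c$ in \eqref{eq:constraint_change} relative to the search-direction magnitude, which bounds the denominator defining $\tau^{trial}_{k,j}$ and prevents $\tau_{k,j}\to 0$, following the arguments of \cite{burke1989robust} and \cite{qiu2023sequential}. Granting this bound, $\Delta l_{S_k}(x_{k,j},d_{k,j},\tau_{k,j})\ge 0$ shows $d_{k,j}$ is a descent direction for $\phi_{S_k}(\cdot,\tau_{k,j})$, so the backtracking Armijo search \eqref{eq:line_search_ineq} terminates in finitely many steps with $\alpha_{k,j}$ bounded below. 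Since $F_{S_k}$ is bounded below on the compact set $\chi$ by \cref{ass:SQP_ineq_assumptions} and the constraint-violation term in \eqref{eq:merit_function_ineq} is bounded, the merit function is bounded below; combined with the monotone non-increasing $\{\tau_{k,j}\}$ and the per-step sufficient decrease, a telescoping argument forces $\Delta l_{S_k}(x_{k,j},d_{k,j},\tau_{k,j})\to 0$, and hence $\|d_{k,j}\|\to 0$.

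Finally I would deduce finite termination. Since $\{d_{k,j}\}\to 0$ while $\gamma_k\in[0,1)$ and $\epsilon_k\ge 0$, the threshold $\gamma_k\|d_{k,0}\|+\epsilon_k$ is a fixed nonnegative quantity; whenever it is strictly positive the criterion \eqref{eq:termination_criterion_ineq} is met after finitely many inner iterations, and in the degenerate case $d_{k,0}=0$ the criterion already holds at $j=0$, where $x_{k,0}$ is either a KKT point of the subsampled problem or is flagged as an infeasible stationary point in Line~\ref{Ineq:inf_Stat_check}. Thus every inner loop terminates either by \eqref{eq:termination_criterion_ineq} or at an infeasible stationary point, completing the proof. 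The principal difficulty remains the uniform lower bound on the merit parameter under the weaker extended MFCQ rather than LICQ, since both feasibility propagation through the two-step subproblems and the descent property hinge on it.
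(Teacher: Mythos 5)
Your proposal is correct and takes essentially the same route as the paper: the paper's proof simply invokes the deterministic robust-\texttt{SQP} theory directly (citing \cite[Theorem 6.1]{burke1989robust}, as restated in \cite[Theorem 2.4]{qiu2023sequential}, for $\|d_{k,j}\|\to 0$ or finite termination with $d_{k,N_k}=0$, and \cite[Lemma 2.2]{burke1989robust} together with the determinism of the constraints for the infeasible-stationary-point exit), whereas you sketch re-verifying that machinery (subproblem feasibility, merit-parameter lower bound under extended MFCQ, Armijo decrease, telescoping). The only point to note, which affects the paper's proof equally, is that finite satisfaction of \eqref{eq:termination_criterion_ineq} from $\|d_{k,j}\|\to 0$ tacitly requires the threshold $\gamma_k\|d_{k,0}\|+\epsilon_k$ to be strictly positive.
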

\begin{proof}
    Based on \cite[Theorem 6.1]{burke1989robust} (as restated in \cite[Theorem 2.4]{qiu2023sequential}), under the stated assumptions, the robust \texttt{SQP} method as described in \cref{sec:Inequality_constrained_algorithm} in outer iteration $k$ either terminates finitely with $d_{k, N_k} = 0$ or 
    $\|d_{k, j}\| \rightarrow 0$ as $j \rightarrow \infty$, thus satisfying termination criterion \eqref{eq:termination_criterion_ineq} finitely. If $d_{k, N_k} = 0$ but the constraints are not satisfied, the algorithm terminates returning an infeasible stationary point based on \cite[Lemma 2.2]{burke1989robust} and the deterministic nature of constraints.
\end{proof}

At $x_{k, j}$, the solution to \eqref{eq:constraint_prob} is independent of the gradient approximation employed. Therefore, the feasible region of \eqref{eq:opt_prob} is independent of the sample set given $x_{k, j}$.
Let $d_{k, j}^{true}$ be the solution to the subproblem \eqref{eq:opt_prob} where the true gradient of the objective function is used. 
If $d_{k, j}^{true} = 0$, $x_{k, j}$ is either a first-order stationary point or an infeasible stationary point by \cite[Lemma 2.2]{burke1989robust}. We analyze the convergence of \cref{alg:Inequality_Constrained_RA_SQP} by analyzing $\|d_{k, j}^{true}\|$, similar to the analysis presented in \cref{sec:General_Deterministic_Solver_theory}. We first establish a bound on $\|d_{k, N_k}^{true}\|$ similar to \cref{lem:main_error_bound}.

\begin{lemma} \label{lem:ineq_main_error_bound}
    Suppose Assumptions \ref{ass:SQP_ineq_assumptions} and \ref{ass:MFCQ_ineq} hold. Then for all $k\geq 0$, the norm of the true search direction at outer iterates generated by \cref{alg:Inequality_Constrained_RA_SQP} satisfy
    \begin{align*}
        \|d_{k, N_k}^{true}\| 
        &\leq  \gamma_k \|d_{k-1, N_{k-1}}^{true}\| + \epsilon_k \numberthis \label{eq:ineq_main_error_bound}  \\
        &\quad + \mu_H^{-1} \|\nabla f(x_{k, N_k}) - g_{S_k}(x_{k, N_k})\| +  \gamma_k\mu_H^{-1} \|\nabla f(x_{k, 0}) - g_{S_k}(x_{k, 0})\|. 
    \end{align*}
    For the expectation problem \eqref{eq:intro_stoch_error_obj}, for all $k \geq 0$
    \begin{align*}
        \Embb[\|d_{k, N_k}^{true}\| | \Fcal_k]
        &\leq  \gamma_k \|d_{k-1, N_{k-1}}^{true}\| + \Embb[\epsilon_k | \Fcal_k] \numberthis \label{eq:ineq_main_error_bound_expec}  \\
        &\quad + \mu_H^{-1} \Embb\left[\| \nabla f(x_{k, N_k}) - g_{S_k}(x_{k, N_k})\| | \Fcal_k\right] \\
        &\quad +  \gamma_k\mu_H^{-1} \Embb\left[\|\nabla f(x_{k, 0}) - g_{S_k}(x_{k, 0})\| | \Fcal_k\right]. 
    \end{align*}
\end{lemma}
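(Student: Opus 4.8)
The plan is to mirror the structure of the proofs of \cref{lem:main_error_bound} and \cref{lem:step_norm_error_bound}: bound $\|d_{k, N_k}^{true}\|$ by the quantity controlled by the termination criterion \eqref{eq:termination_criterion_ineq} plus terms measuring how far the subsampled search direction is from the true one. The one genuinely new ingredient is a sensitivity estimate for the inequality-constrained quadratic program \eqref{eq:opt_prob}, replacing the KKT-matrix inverse bound used in the equality-constrained setting. The key structural observation, already noted in the text preceding the lemma, is that at a fixed iterate $x_{k, j}$ the first-step solution $y$ (or $y_E, y_I$) from \eqref{eq:constraint_prob} and the constraint data do not depend on the gradient approximation; hence $d_{k, j}$ and $d_{k, j}^{true}$ are minimizers of the \emph{same} strongly convex objective with Hessian $H_{k, j}$ over the \emph{same} convex feasible region, differing only in the linear term $g_{S_k}(x_{k, j})^T d$ versus $\nabla f(x_{k, j})^T d$.

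First I would establish the sensitivity bound $\|d_{k, j} - d_{k, j}^{true}\| \leq \mu_H^{-1} \|\nabla f(x_{k, j}) - g_{S_k}(x_{k, j})\|$. Writing the feasible set of \eqref{eq:opt_prob} at $x_{k,j}$ as a fixed convex set $\mathcal{C}_{k, j}$, the first-order optimality conditions give the variational inequalities $(H_{k, j} d_{k, j} + g_{S_k}(x_{k, j}))^T (d - d_{k, j}) \geq 0$ and $(H_{k, j} d_{k, j}^{true} + \nabla f(x_{k, j}))^T (d - d_{k, j}^{true}) \geq 0$ for all $d \in \mathcal{C}_{k, j}$. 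Testing the first with $d = d_{k, j}^{true}$, the second with $d = d_{k, j}$, and adding yields, after rearranging, $(d_{k, j} - d_{k, j}^{true})^T H_{k, j} (d_{k, j} - d_{k, j}^{true}) \leq (\nabla f(x_{k, j}) - g_{S_k}(x_{k, j}))^T (d_{k, j} - d_{k, j}^{true})$. Using $\mu_H I_n \preceq H_{k, j}$ from \cref{ass:SQP_ineq_assumptions} on the left and Cauchy--Schwarz on the right gives the stated estimate.

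Next I would telescope exactly as in \cref{lem:step_norm_error_bound}. At the end of outer iteration $k$,
\begin{align*}
    \|d_{k, N_k}^{true}\|
    &\leq \|d_{k, N_k}\| + \|d_{k, N_k}^{true} - d_{k, N_k}\| \\
    &\leq \gamma_k \|d_{k, 0}\| + \epsilon_k + \mu_H^{-1}\|\nabla f(x_{k, N_k}) - g_{S_k}(x_{k, N_k})\| \\
    &\leq \gamma_k \|d_{k, 0}^{true}\| + \gamma_k \mu_H^{-1}\|\nabla f(x_{k, 0}) - g_{S_k}(x_{k, 0})\| + \epsilon_k + \mu_H^{-1}\|\nabla f(x_{k, N_k}) - g_{S_k}(x_{k, N_k})\|,
\end{align*}
where the second inequality uses the termination criterion \eqref{eq:termination_criterion_ineq} together with the sensitivity bound at $x_{k, N_k}$, and the third applies the sensitivity bound at $x_{k, 0}$. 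Identifying $d_{k, 0}^{true} = d_{k-1, N_{k-1}}^{true}$—which holds because $x_{k, 0} = x_{k-1, N_{k-1}}$ fixes both the feasible region and, with the Hessian carried over so that $H_{k, 0} = H_{k-1, N_{k-1}}$, the full quadratic program—yields \eqref{eq:ineq_main_error_bound}. The expectation bound \eqref{eq:ineq_main_error_bound_expec} then follows by taking the conditional expectation given $\Fcal_k$, since $d_{k-1, N_{k-1}}^{true}$ is measurable with respect to $\Fcal_k$.

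The main obstacle is the quadratic program sensitivity bound, since the inequality-constrained minimizer is no longer the solution of a linear system and the argument cannot reuse the KKT-matrix inverse as in \cref{lem:main_error_bound}. The variational-inequality/strong-monotonicity argument circumvents this, but it relies essentially on the fact that only the linear term of the objective is perturbed while the feasible region is held fixed; I would make sure to justify this invariance carefully using the structure of \eqref{eq:constraint_prob}--\eqref{eq:opt_prob}, and to confirm that the Hessian carry-over $H_{k, 0} = H_{k-1, N_{k-1}}$ required for the telescoping step is consistent with the use of the search-direction-norm termination criterion \eqref{eq:termination_criterion_ineq}.
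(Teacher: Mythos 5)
Your proposal is correct and follows essentially the same route as the paper's proof: the paper also derives the bound $\|d_{k,j}-d_{k,j}^{true}\| \leq \mu_H^{-1}\|\nabla f(x_{k,j})-g_{S_k}(x_{k,j})\|$ from the first-order optimality (variational inequality) conditions of the two quadratic programs over the common convex feasible set (invoking \cite[Proposition~1.1.9]{bertsekas2009convex} after completing the square), then telescopes exactly as you do, using the termination criterion \eqref{eq:termination_criterion_ineq}, the identification $d_{k,0}^{true}=d_{k-1,N_{k-1}}^{true}$ via $x_{k,0}=x_{k-1,N_{k-1}}$ and $H_{k,0}=H_{k-1,N_{k-1}}$, and conditional expectation given $\Fcal_k$.
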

\begin{proof}
    We first provide a bound on the error in the search direction based on the error in the subsampled gradient, following the procedure in \cite[Lemma 4.16]{curtis2023sequential}. We then follow the procedure from \cref{lem:step_norm_error_bound} to establish the desired bounds. 
    
    At $x_{k, j}$, the feasible region to the subproblems for $d_{k, j}$ and $d_{k, j}^{true}$ are the same. The objective function of \eqref{eq:opt_prob} can be rewritten as $(H_{k, j}^{-1} g_{S_k}(x_{k, j}) + d_{k, j})^T H_{k, j} (H_{k, j}^{-1} g_{S_k}(x_{k, j}) + d_{k, j})$ and as $(H_{k, j}^{-1} \nabla f(x_{k, j}) + d_{k, j}^{true})^T H_{k, j} (H_{k, j}^{-1} \nabla f(x_{k, j}) + d_{k, j}^{true})$ when the true problem gradient is employed. As the feasible region is convex for both $l_1$ and $l_{\infty}$ norm based formulations in \eqref{eq:opt_prob}, from \cite[Proposition 1.1.9]{bertsekas2009convex},
    \begin{equation*}
        \begin{aligned}
            (d_{k, j} - d_{k, j}^{true})^T H_{k, j} (H_{k, j}^{-1} \nabla f(x_{k, j}) - d_{k, j}^{true}) &\leq 0
            \quad \text{and,}\\
            (d_{k, j}^{true} - d_{k, j})^T H_{k, j} (H_{k, j}^{-1} g_{S_k}(x_{k, j}) - d_{k, j}) &\leq 0.
        \end{aligned}
    \end{equation*}
    Summing the two inequalities,
    \begin{align*}
        0 &\leq (d_{k, j}^{true} - d_{k, j})^T H_{k, j} (H_{k, j}^{-1} \nabla f(x_{k, j}) - d_{k, j}^{true} - H_{k, j}^{-1} g_{S_k}(x_{k, j}) + d_{k, j})\\
        &= - (d_{k, j}^{true} - d_{k, j})^T H_{k, j} (d_{k, j}^{true} - d_{k, j}) \\
        &\quad + (d_{k, j}^{true} - d_{k, j})^T H_{k, j} H_{k, j}^{-1} (\nabla f(x_{k, j}) - g_{S_k}(x_{k, j}))\\
        &\leq -\mu_H \|d_{k, j}^{true} - d_{k, j}\|^2 + (d_{k, j}^{true} - d_{k, j})^T (\nabla f(x_{k, j}) - g_{S_k}(x_{k, j})) \\
        &\leq -\mu_H \|d_{k, j}^{true} - d_{k, j}\|^2 + \|d_{k, j}^{true} - d_{k, j}\| \|\nabla f(x_{k, j}) - g_{S_k}(x_{k, j})\|, 
    \end{align*}
    where the second inequality follows from $\mu_H I_n \preceq H_{k, j}$ and the third inequality follows from Cauchy-Schwarz inequality. Using the above inequality, it follows that
    \begin{align*}
        \|d_{k, j}^{true} - d_{k, j}\| &\leq  \mu_H ^{-1}  \|\nabla f(x_{k, j}) - g_{S_k}(x_{k, j})\|.
    \end{align*}
    The true search direction norm can be bounded as,
    \begin{align*}
        \|d_{k, N_k}^{true}\|
        &\leq  \|d_{k, N_k}\| + \|d_{k, N_k}^{true} - d_{k, N_k}\| \\
        &\leq  \gamma_k \|d_{k, 0}\| + \epsilon_k + \|d_{k, N_k}^{true} - d_{k, N_k}\| \\ 
        &\leq  \gamma_k \|d_{k, 0}^{true}\| + \epsilon_k + \|d_{k, N_k}^{true} - d_{k, N_k}\| +  \gamma_k \|d_{k, 0}^{true} - d_{k, 0}\|\\ 
        &=  \gamma_k \|d_{k-1, N_{k-1}}^{true}\| + \epsilon_k + \|d_{k, N_k}^{true} - d_{k, N_k}\| +  \gamma_k \|d_{k, 0}^{true} - d_{k, 0}\|,
    \end{align*}
    where the second inequality follows from termination criterion \eqref{eq:termination_criterion_ineq} and the equality follows because $x_{k, 0} = x_{k-1, N_{k-1}}$, $H_{k, 0} = H_{k-1, N_{k-1}}$ resulting in the same subproblems for $d_{k, 0}^{true}$ and $d_{k-1, N_{k-1}}^{true}$. Substituting the derived bound on the search direction error yields \eqref{eq:ineq_main_error_bound}. Taking conditional expectation of \eqref{eq:ineq_main_error_bound} given $\Fcal_k$ yields \eqref{eq:ineq_main_error_bound_expec} as $d_{k-1, N_{k-1}}^{true}$ is known under $\Fcal_k$.
\end{proof}

Using \cref{lem:ineq_main_error_bound}, we now establish the convergence of \cref{alg:Inequality_Constrained_RA_SQP}, following the procedure in \cref{th:Eq_convergence}.

\begin{theorem} \label{th:convergence_ineq}
    Suppose Assumptions \ref{ass:gradient_errors}, \ref{ass:SQP_ineq_assumptions} and \ref{ass:MFCQ_ineq} hold for any run of \cref{alg:Inequality_Constrained_RA_SQP}.
    \begin{enumerate}
        \item For the finite-sum problem \eqref{eq:intro_deter_error_obj}: If the batch size sequence is chosen such that $\{S_k\} \rightarrow \Scal$ and the termination criterion parameters are chosen such that $\epsilon_k \rightarrow 0$ and $0 \leq \{\gamma_k\} \leq \gamma < 1$, then $\|d_{k, N_k}^{true}\| \rightarrow 0$.
        \item For the expectation problem \eqref{eq:intro_stoch_error_obj}: If the batch size sequence is chosen such that $\{|S_k|\} \rightarrow \infty$ and the termination criterion parameters are chosen such that $\Embb[\epsilon_k | \Fcal_k] \rightarrow 0$ and $0 \leq \{\gamma_k\} \leq \tilde{\gamma} < 1$, and given $\Embb\left[G_{S_k}^2 | \Fcal_k\right] \rightarrow 0$ as $\{|S_k|\} \rightarrow \infty$, then $\Embb\left[\|d_{k, N_k}^{true}\|\right] \rightarrow 0$.
    \end{enumerate}
\end{theorem}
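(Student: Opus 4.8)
The plan is to mirror the argument of \cref{th:Eq_convergence}, replacing the KKT-error recursion of \cref{lem:main_error_bound} with the true-search-direction recursion of \cref{lem:ineq_main_error_bound}. First I would unroll \eqref{eq:ineq_main_error_bound} across outer iterations to obtain, for all $k \geq 0$,
\begin{align*}
    \|d_{k, N_k}^{true}\|
    &\leq \left\{\prod_{h=0}^{k} \gamma_h\right\} \|d_{0,0}^{true}\| + \epsilon_k + \mu_H^{-1}\|\nabla f(x_{k,N_k}) - g_{S_k}(x_{k,N_k})\| \\
    &\quad + \gamma_k \mu_H^{-1}\|\nabla f(x_{k,0}) - g_{S_k}(x_{k,0})\| \\
    &\quad + \sum_{i=0}^{k-1} \left\{\prod_{h=i+1}^{k}\gamma_h\right\}\left[\epsilon_i + \mu_H^{-1}\|\nabla f(x_{i,N_i}) - g_{S_i}(x_{i,N_i})\|\right] \\
    &\quad + \sum_{i=0}^{k-1}\left\{\prod_{h=i+1}^{k}\gamma_h\right\}\gamma_i \mu_H^{-1}\|\nabla f(x_{i,0}) - g_{S_i}(x_{i,0})\|,
\end{align*}
and then bound each product $\prod_{h=i+1}^{k}\gamma_h \leq \gamma^{k-i}$ using $\{\gamma_k\} \leq \gamma < 1$ (respectively $\tilde{\gamma}$ in the stochastic case). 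The leading geometric term vanishes since $\prod_{h=0}^{k}\gamma_h \leq \gamma^{k+1}$, and the remaining work is to drive the two tail sums to zero.

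For the finite-sum problem, I would control the gradient-error terms using \eqref{eq:deter_sampled_gradient_error} together with the deterministic bound $\|\nabla f(x)\| \leq \kappa_g$ guaranteed over $\chi$ under \cref{ass:SQP_ineq_assumptions} (equivalently \cref{ass:Bounded_outer_gradient_strong}). This reduces each gradient-error term to a multiple of $\bigl(1 - |S_i|/|\Scal|\bigr)$, which vanishes as $\{S_k\} \to \Scal$. With $\epsilon_k \to 0$ by hypothesis, the two tail sums are then of the form $\sum_{i=0}^{k-1} a_i \gamma^{k-i}$ with summands $a_i \to 0$, so invoking \cref{lem:series_for_convergence} yields $\|d_{k,N_k}^{true}\| \to 0$.

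For the expectation problem I would take total expectations of \eqref{eq:ineq_main_error_bound_expec} and apply the tower property. The main obstacle is the \emph{biased} gradient-error term $\Embb[\|\nabla f(x_{i,N_i}) - g_{S_i}(x_{i,N_i})\| \mid \Fcal_i]$, in which $x_{i,N_i}$ is produced by the deterministic inner loop driven by $S_i$ and is therefore statistically dependent on $S_i$, so the clean i.i.d. bound \eqref{eq:stoch_sampled_gradient_error} does not apply. I would resolve this exactly as in \eqref{eq:biased_gradient_error_bound}: bound the term by $\epsilon_G \Embb[G_{S_i} \mid \Fcal_i] + \sqrt{\Embb[G_{S_i}^2 \mid \Fcal_i]\,\Embb[\|\nabla f(x_{i,N_i})\|^2 \mid \Fcal_i]}$ via the conditional Cauchy--Schwarz inequality and the metric \eqref{eq:G_metric_def}, after which the hypothesis $\Embb[G_{S_i}^2 \mid \Fcal_i] \to 0$ and $\|\nabla f(x)\| \leq \kappa_g$ force it to zero. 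The \emph{unbiased} term at $x_{i,0}$ (where $S_i$ is independent of $\Fcal_i$) is handled directly by \eqref{eq:stoch_sampled_gradient_error} and vanishes as $\{|S_k|\} \to \infty$. With $\Embb[\epsilon_k \mid \Fcal_k] \to 0$ and $\{\gamma_k\} \leq \tilde{\gamma} < 1$, a final application of \cref{lem:series_for_convergence} to the tail sums (whose summands $\Embb[a_i] \to 0$) gives $\Embb[\|d_{k,N_k}^{true}\|] \to 0$, completing both cases.
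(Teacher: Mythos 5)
Your proposal is correct and follows essentially the same route as the paper's own proof: unrolling the recursion of \cref{lem:ineq_main_error_bound}, bounding the finite-sum gradient errors via \eqref{eq:deter_sampled_gradient_error} with the deterministic bound $\kappa_g$ from \cref{ass:SQP_ineq_assumptions}, handling the biased terms in the expectation case via the metric \eqref{eq:G_metric_def} and \eqref{eq:biased_gradient_error_bound}, and invoking \cref{lem:series_for_convergence} on the tail sums. The paper itself notes its proof "follows the same procedure as \cref{th:Eq_convergence}," which is exactly the template you reproduce.
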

\begin{proof}
    The proof follows the same procedure as \cref{th:Eq_convergence} and has been included here for completeness. 
    For the finite-sum problem \eqref{eq:intro_deter_error_obj}, unrolling \eqref{eq:ineq_main_error_bound} from \cref{lem:ineq_main_error_bound} and substituting \eqref{eq:deter_sampled_gradient_error} yields for all $k \geq 0$,
    \begin{align*}
        \|d_{k, N_k}^{true}\|
        &\leq \left\{\prod_{h = 0}^{k} \gamma_h \right\}\|d_{0, 0}^{true}\|  + \epsilon_k \\
        &\quad + 2\mu_H^{-1} \left(\tfrac{|\Scal| - |S_k|}{|\Scal|}\right) \left(\omega_1 (\|\nabla f(x_{k, N_k})\| +\gamma_k \|\nabla f(x_{k, 0})\|) + \omega_2 (1 + \gamma_k)
        \right)\\
        &\quad + \sum_{i=0}^{k-1} \left\{\prod_{h = i+1}^{k} \gamma_h \right\} \left[\epsilon_i + 2\mu_H^{-1} \left(\tfrac{|\Scal| - |S_i|}{|\Scal|}\right) \left(\omega_1 \|\nabla f(x_{i, N_i})\| + \omega_2 \right) \right] \\
        &\quad + \sum_{i=0}^{k-1} \left\{\prod_{h = i+1}^{k} \gamma_h \right\} \left[2\mu_H^{-1} \gamma_i \left(\tfrac{|\Scal| - |S_i|}{|\Scal|}\right) \left(\omega_1 \|\nabla f(x_{i, 0})\| + \omega_2 \right) \right] \\
        &\leq \left\{\prod_{h = 0}^{k} \gamma_h \right\}\|d_{0, 0}^{true}\|  + \epsilon_k + 2\mu_H^{-1} \left(\tfrac{|\Scal| - |S_k|}{|\Scal|}\right) \left(\omega_1 \kappa_g + \omega_2 
        \right)(1 +\gamma_k) \\
        &\quad + \sum_{i=0}^{k-1} \left\{\prod_{h = i+1}^{k} \gamma_h \right\} \left[\epsilon_i + 2\mu_H^{-1} \left(\tfrac{|\Scal| - |S_i|}{|\Scal|}\right) \left(\omega_1 \kappa_g + \omega_2\right)(1 +\gamma_i)  \right] ,
    \end{align*}
    where the second inequality follows from \cref{ass:SQP_ineq_assumptions}. 
    The first term in the above inequality converges to zero as $\left\{\prod_{h = 0}^{k} \gamma_h \right\} \leq \gamma^{k+1}$ where $\gamma < 1$. The second and third terms converge to zero based on the specifications $\epsilon_k \rightarrow 0$ and $\{|S_k|\} \rightarrow |\Scal|$, respectively.
    The final term is a series of the form $\sum_{i=0}^{k-1}a_i \left\{\prod_{h = i+1}^{k} \gamma_h \right\} \leq \sum_{i=0}^{k-1}a_i \gamma^{k - i}$ which converges to zero by \cref{lem:series_for_convergence}, with $b_i = a_i = \epsilon_i + 2\mu_H^{-1} \left(\tfrac{|\Scal| - |S_i|}{|\Scal|}\right) (1 + \gamma_i)\left(\omega_1 \kappa_g  + \omega_2 \right) \rightarrow 0$, thus completing the proof. 

    For the expectation problem \eqref{eq:intro_stoch_error_obj}, the expectation result \eqref{eq:ineq_main_error_bound_expec} from \cref{lem:ineq_main_error_bound} can be further refined following the procedure from \cref{th:Eq_convergence}. For all $k\geq0$,
    \begin{align*}
        \Embb[\|d_{k, N_k}^{true}\|]
        &\leq  \left\{\prod_{h = 0}^{k} \gamma_h \right\}\|d_{k, N_k}^{true}\|  + \Embb\left[\Embb[\epsilon_k | \Fcal_k] + \epsilon_G\mu_H^{-1}\Embb[G_{S_k}|\Fcal_k] + \kappa_g\mu_H^{-1}\sqrt{\Embb[G_{S_k}^2 |\Fcal_k]}\right] \\
        &\quad + \Embb\left[\gamma_k \mu_H^{-1} \tfrac{\Tilde{\omega}_1 \kappa_g + \Tilde{\omega}_2}{\sqrt{|S_k|}}\right] + \sum_{i=0}^{k-1} \left\{\prod_{h = i+1}^{k} \gamma_h \right\}\Embb\left[\Embb[\epsilon_i| \Fcal_i]\right] \\
        &\quad + \sum_{i=0}^{k-1} \left\{\prod_{h = i+1}^{k} \gamma_h \right\}\Embb\left[\epsilon_G\mu_H^{-1}\Embb[G_{S_i}|\Fcal_i] + \kappa_g\mu_H^{-1}\sqrt{\Embb[G_{S_i}^2 |\Fcal_i]} + \gamma_i\mu_H^{-1}\tfrac{\Tilde{\omega}_1 \kappa_g + \Tilde{\omega}_2}{\sqrt{|S_i|}} \right] .
    \end{align*}
    In the above bound, the first term is the same as it appeared for deterministic errors.
    All the terms within the second and third term expectation converge to zero based on the specifications $\Embb[\epsilon_k | \Fcal_k] \rightarrow 0$, $\{|S_k|\} \rightarrow \infty$, and the assumption $\Embb[G_{S_k}^2 | \Fcal_k] \rightarrow 0$.  
    The final terms together are a series of the form $\sum_{i=0}^{k-1}\Embb[a_i] \left\{\prod_{h = i+1}^{k} \gamma_h \right\} \leq \sum_{i=0}^{k-1} \Embb[a_i] \tilde{\gamma}^{k - i}$ which converges to zero by \cref{lem:series_for_convergence}, with $b_i = \Embb[a_i] \rightarrow 0$ as $a_i \rightarrow 0$, thus completing the proof.
\end{proof}

\cref{th:convergence_ineq} establishes the convergence of the norm of the true search direction to zero. As established in \cite{burke1989robust, qiu2023sequential}, if the $\|d_{k, N_k}^{true}\| \rightarrow 0$, the limit point is a stationary point for the constraint violation minimization problem \eqref{eq:inf_stat_point}. If the limit point is feasible, it is a first-order stationary point for \eqref{eq:intro_problem}. While the convergence of the true search direction norm holds deterministically for the finite-sum problem \eqref{eq:intro_deter_error_obj}, it holds in expectation for the expectation problem \eqref{eq:intro_stoch_error_obj}. Thus, for the expectation problem \eqref{eq:intro_stoch_error_obj}, if there exists a subsequence of $\{d_{k, N_k}\}$ that converges to zero, the limit point is either an infeasible stationary point or a first-order stationary point. However, this scenario is not guaranteed from the convergence in expectation of the true search direction norm, similar to the result in \cite{curtis2023sequential}.
Convergence can be established under stricter assumptions on the gradient errors, similar to the conditions in \cite{qiu2023sequential}.

We now introduce the adaptive strategy for sample set size selection in \cref{alg:Inequality_Constrained_RA_SQP} as a modification of the well-known norm test, similar to \cref{cond:eq_norm_test}.
\begin{condition} \label{cond:ineq_norm_test}
    At the beginning of outer iteration $k \geq 0$ in \cref{alg:Inequality_Constrained_RA_SQP}, the batch size $|S_k|$ is selected such that:
    \begin{enumerate}
        \item For the finite-sum problem \eqref{eq:intro_deter_error_obj}: With constants $\theta, \hat{\theta}, a \geq 0$ and $\beta \in (0, 1)$, 
        \begin{align*}
            \|\nabla f(x_{k, 0}) - g_{S_k}(x_{k, 0})\|^2 &\leq \theta^2 \|d_{k, 0}^{true}\|^2 + a^2 \beta^{2k},\\
            \|\nabla f(x_{k, N_k}) - g_{S_k}(x_{k, N_k})\|^2 &\leq \hat{\theta}^2 \left(\theta^2 \|d_{k, 0}^{true}\|^2 + a^2 \beta^{2k}\right).
        \end{align*}
        \item For the expectation problem \eqref{eq:intro_stoch_error_obj}: With constants $\tilde{\theta}, \tilde{a} \geq 0$ and $\tilde{\beta} \in (0, 1)$, 
        \begin{align*}
            \Embb\left[\|\nabla f(x_{k, 0}) - g_{S_k}(x_{k, 0})\|^2 | \Fcal_k\right] \leq \tilde{\theta}^2 \|d_{k, 0}^{true}\|^2 + \tilde{a}^2 \tilde{\beta}^{2k}.
        \end{align*}
    \end{enumerate}
\end{condition}
\cref{cond:ineq_norm_test} is analogous to \cref{cond:eq_norm_test}, and a discussion of its implications can be found in \cref{remark:norm_test_conditions}. The practical aspects of its implementation are discussed later in \cref{sec:Numerical_experiments}.
We now establish the complexity of \cref{alg:Inequality_Constrained_RA_SQP} under \cref{cond:ineq_norm_test} across outer iterations, assuming that \cref{ass:Variance_regularity} holds for the expectation problem \eqref{eq:intro_stoch_error_obj}, in a manner similar to \cref{th:EQ_outer_iter_complexity}.

\begin{theorem} \label{th:outer_complexity_ineq}
    Suppose Assumptions \ref{ass:gradient_errors}, \ref{ass:SQP_ineq_assumptions} and \ref{ass:MFCQ_ineq} hold, and the batch size sequence $\{|S_k|\}$ is chosen to satisfy \cref{cond:ineq_norm_test} in \cref{alg:Inequality_Constrained_RA_SQP}.
    \begin{enumerate}
        \item For the finite-sum problem \eqref{eq:intro_deter_error_obj}: If termination criterion parameters are chosen as $0 \leq \{\gamma_k\} \leq \gamma < 1$, $\epsilon_k = \omega \|\nabla f(x_{k, 0}) - g_{S_k}(x_{k, 0})\| + \hat{\omega}\beta^k$ with $\omega, \hat{\omega} \geq 0$  and  \cref{cond:ineq_norm_test} parameters are chosen such that $a_1 = \left[\gamma + \theta(\omega + \mu_H^{-1}(\hat{\theta} + \gamma))\right] < 1$, the true search direction converges to zero at a linear rate across outer iterations as,
        \begin{align*}
            \|d_{k, N_k}^{true}\|            
            \leq \max \{a_1 + \nu, \beta\}^{k+1} \max\left\{ \|d_{0, 0}^{true}\|, \tfrac{a_2}{\nu}\right\},
        \end{align*}
        where $a_2 = a(\omega + \mu_H^{-1}(\hat{\theta} + \gamma)) + \hat{\omega}$ and $\nu > 0$ such that $a_1 + \nu < 1$.
        
        \item For the expectation problem \eqref{eq:intro_stoch_error_obj}: If \cref{ass:Variance_regularity} is satisfied and the termination criterion parameters are chosen such that $0 \leq \{\gamma_k\} \leq \tilde{\gamma} < 1$, $\epsilon_k = \tilde{\omega}\sqrt{\tfrac{\Var( \nabla F(x_{k, 0}) | \Fcal_k)}{|S_k|}}$ where $\tilde{\omega} \geq 0$ and \cref{cond:ineq_norm_test} parameters are chosen such that $\tilde{a}_1 = \left[\tilde{\gamma} + \tilde{\theta} \left(\tilde{\omega} + \mu_H^{-1}\left(\tfrac{(\epsilon_G + \kappa_g)}{\kappa_{\sigma}} + \tilde{\gamma}\right)\right) \right] < 1$,  the true search direction converges to zero in expectation at a linear rate across outer iterations as,
        \begin{align*}
            \Embb\left[\|d_{k, N_k}^{true}\|\right]                
            \leq \max \{\tilde{a}_1 + \tilde{\nu}, \tilde{\beta}\}^{k+1} \max\left\{ \|d_{0, 0}^{true}\|, \tfrac{\tilde{a}_2}{\tilde{\nu}}\right\},
        \end{align*}
        where $\tilde{a}_2 = \tilde{a}\left[\tilde{\omega} + \mu_H^{-1}\left(\tfrac{(\epsilon_G + \kappa_g)}{\kappa_{\sigma}} + \tilde{\gamma}\right) \right]$ and $\tilde{\nu} > 0$  such that $\tilde{a}_1 + \tilde{\nu} < 1$.  
    \end{enumerate}
\end{theorem}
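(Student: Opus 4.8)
The plan is to adapt the proof of \cref{th:EQ_outer_iter_complexity} essentially verbatim, with the KKT error $\|T(x_{k,N_k},\lambda_{k,N_k})\|$ replaced by the true search direction norm $\|d_{k,N_k}^{true}\|$ and the recursion supplied by \cref{lem:ineq_main_error_bound} in place of \cref{lem:main_error_bound}. The only structural differences are the appearance of the scaling factor $\mu_H^{-1}$ on the gradient-error terms (rather than the factor $1$ in the equality case) and the use of $\|d_{k,0}^{true}\|$ rather than $\|T(x_{k,0},\cdot)\|$ on the right-hand side of the sampling condition \cref{cond:ineq_norm_test}. Both are absorbed into the constants $a_1,a_2$ (resp. $\tilde{a}_1,\tilde{a}_2$) exactly as stated.

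For the finite-sum problem \eqref{eq:intro_deter_error_obj}, I would begin with the deterministic recursion \eqref{eq:ineq_main_error_bound} and substitute $\epsilon_k = \omega\|\nabla f(x_{k,0}) - g_{S_k}(x_{k,0})\| + \hat{\omega}\beta^k$. Using subadditivity of the square root to pass from \cref{cond:ineq_norm_test} to the linear bounds $\|\nabla f(x_{k,0}) - g_{S_k}(x_{k,0})\| \leq \theta\|d_{k,0}^{true}\| + a\beta^k$ and $\|\nabla f(x_{k,N_k}) - g_{S_k}(x_{k,N_k})\| \leq \hat{\theta}(\theta\|d_{k,0}^{true}\| + a\beta^k)$, and invoking the identity $d_{k,0}^{true} = d_{k-1,N_{k-1}}^{true}$ (which holds because $x_{k,0}=x_{k-1,N_{k-1}}$ and $H_{k,0}=H_{k-1,N_{k-1}}$ yield identical subproblems \eqref{eq:opt_prob}), I would collect the coefficient of $\|d_{k-1,N_{k-1}}^{true}\|$ into $a_1 = \gamma + \theta(\omega + \mu_H^{-1}(\hat{\theta}+\gamma))$ and the coefficient of $\beta^k$ into $a_2 = a(\omega + \mu_H^{-1}(\hat{\theta}+\gamma)) + \hat{\omega}$. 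This produces the one-step bound $\|d_{k,N_k}^{true}\| \leq a_1\|d_{k-1,N_{k-1}}^{true}\| + a_2\beta^k$, to which \cref{lem:linear_convergence_induction} applies with $Z_k=\|d_{k,N_k}^{true}\|$, $\rho_1=a_1$, $\rho_2=\beta$, $b=a_2$, closing the case.

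For the expectation problem \eqref{eq:intro_stoch_error_obj}, I would start from \eqref{eq:ineq_main_error_bound_expec} with $\epsilon_k = \tilde{\omega}\sqrt{\Var(\nabla F(x_{k,0})|\Fcal_k)/|S_k|}$. The term at $x_{k,0}$ is handled by \eqref{eq:stoch_sampled_gradient_error} together with the expectation version of \cref{cond:ineq_norm_test}, giving $\Embb[\|\nabla f(x_{k,0}) - g_{S_k}(x_{k,0})\| \mid \Fcal_k] \leq \tilde{\theta}\|d_{k,0}^{true}\| + \tilde{a}\tilde{\beta}^k$ after applying Jensen and subadditivity of the square root. The term at $x_{k,N_k}$ is the delicate one, since $x_{k,N_k}$ depends on $S_k$; I would control it with the metric bound \eqref{eq:biased_gradient_error_bound}, the CLT-scaling part of \cref{ass:Variance_regularity} ($\Embb[G_{S_k}^2|\Fcal_k]\leq \kappa_G^2/|S_k|$), and the bound $\|\nabla f\|\leq\kappa_g$, yielding $\mu_H^{-1}\Embb[\|\nabla f(x_{k,N_k}) - g_{S_k}(x_{k,N_k})\|\mid\Fcal_k] \leq \mu_H^{-1}\kappa_G(\epsilon_G+\kappa_g)/\sqrt{|S_k|}$. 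Substituting the variance lower bound $\kappa_G \leq \sqrt{\Var(\nabla F(x_{k,0})|\Fcal_k)}/\kappa_\sigma$ and then \cref{cond:ineq_norm_test} once more converts this into $\tilde{\theta}\mu_H^{-1}\kappa_\sigma^{-1}(\epsilon_G+\kappa_g)\|d_{k,0}^{true}\|$ plus a $\tilde{\beta}^k$ term. Collecting coefficients gives $\tilde{a}_1 = \tilde{\gamma} + \tilde{\theta}(\tilde{\omega} + \mu_H^{-1}((\epsilon_G+\kappa_g)/\kappa_\sigma + \tilde{\gamma}))$ and $\tilde{a}_2 = \tilde{a}(\tilde{\omega} + \mu_H^{-1}((\epsilon_G+\kappa_g)/\kappa_\sigma + \tilde{\gamma}))$; taking total expectation and applying \cref{lem:linear_convergence_induction} finishes the proof.

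The main obstacle is the expectation-case treatment of the gradient error at $x_{k,N_k}$, whose dependence on $S_k$ breaks the independence needed for \eqref{eq:stoch_sampled_gradient_error}; this is precisely why the $G_{S_k}$-metric, the CLT-scaling assumption, and the variance lower bound are required. Since all three devices are already assembled for the equality-constrained analysis in \cref{th:EQ_outer_iter_complexity}, the remaining work is almost entirely a bookkeeping exercise of propagating the factor $\mu_H^{-1}$ through the constants, and I would simply note this parallel and relegate the full computation to the appendix.
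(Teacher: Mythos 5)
Your proposal is correct and follows essentially the same route as the paper's own proof: start from the recursion in \cref{lem:ineq_main_error_bound}, substitute the stated $\epsilon_k$, apply \cref{cond:ineq_norm_test} (using $d_{k,0}^{true}=d_{k-1,N_{k-1}}^{true}$, and in the expectation case \eqref{eq:biased_gradient_error_bound}, CLT scaling, and the variance lower bound to handle the $S_k$-dependent error at $x_{k,N_k}$), collect the constants into $a_1,a_2$ (resp. $\tilde{a}_1,\tilde{a}_2$), and close with \cref{lem:linear_convergence_induction}. The only cosmetic difference is that the paper executes this computation in full in the main text rather than deferring it to the appendix.
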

\begin{proof}
    For the finite-sum problem \eqref{eq:intro_deter_error_obj}, the error bound \eqref{eq:ineq_main_error_bound} can be further refined as,
    \begin{align*}
        \|d_{k, N_k}^{true}\|
        &\leq \gamma \|d_{k-1, N_{k-1}}^{true}\| +  \omega \left\|\nabla f(x_{k, 0}) - g_{S_k}(x_{k, 0})\right\| + \hat{\omega}\beta^k \\
        &\quad + \mu_H^{-1}\left\|\nabla f(x_{k, N_k}) - g_{S_k}(x_{k, N_k})\right\| + \gamma\mu_H^{-1}\|\nabla f(x_{k, 0}) - g_{S_k}(x_{k, 0})\|\\
        &\leq  \gamma \|d_{k-1, N_{k-1}}^{true}\| + \omega \left(\theta \|d_{k-1, N_{k-1}}^{true}\| + a \beta^{k}\right)  + \hat{\omega}\beta^k \\
        &\quad + \mu_H^{-1}\hat{\theta}(\theta \|d_{k-1, N_{k-1}}^{true}\| + a \beta^{k}) + \mu_H^{-1}\gamma \left(\theta \|d_{k-1, N_{k-1}}^{true}\| + a \beta^{k}\right)\\
        &=  a_1 \|d_{k-1, N_{k-1}}^{true}\| + a_2 \beta^{k},
    \end{align*}
    where the second inequality follows from \cref{cond:ineq_norm_test} and the final equality follows from the defined constants. 
    Using the above bound, applying \cref{lem:linear_convergence_induction} with $Z_k = \|d_{k, N_k}^{true}\|$, $\rho_1 = a_1$, $\rho_2 = \beta$ and $b = a_2$ completes the proof.
    
    For the expectation problem \eqref{eq:intro_stoch_error_obj}, the expectation error bound \eqref{eq:ineq_main_error_bound_expec} can be further refined as,
    \begin{align*}
        \Embb\left[\|d_{k, N_k}^{true}\| | \Fcal_k \right]
        &\leq \tilde{\gamma} \|d_{k-1, N_{k-1}}^{true}\| + \tilde{\omega}\sqrt{\tfrac{Var(\nabla F(x_{k, 0}) | \Fcal_k)}{|S_k|}} \\
        &\quad + \mu_H^{-1}\Embb\left[\left\|\nabla f(x_{k, N_k}) - g_{S_k}(x_{k, N_k})\right\| | \Fcal_k \right] \\
        &\quad + \tilde{\gamma}\mu_H^{-1}\Embb\left[\|\nabla f(x_{k, 0}) - g_{S_k}(x_{k, 0})\| | \Fcal_k \right]\\
        &\leq \tilde{\gamma} \|d_{k-1, N_{k-1}}^{true}\| + \tilde{\omega}\sqrt{\tfrac{Var(\nabla F(x_{k, 0}) | \Fcal_k)}{|S_k|}} \\
        &\quad + \mu_H^{-1}\kappa_G\tfrac{(\epsilon_G + \kappa_g)}{\sqrt{|S_k|}} + \tilde{\gamma} \mu_H^{-1} \Embb\left[\|\nabla f(x_{k, 0}) - g_{S_k}(x_{k, 0})\| | \Fcal_k \right] \\
        &\leq \tilde{\gamma} \|d_{k-1, N_{k-1}}^{true}\| + \tilde{\omega}\left[\tilde{\theta} \|d_{k-1, N_{k-1}}^{true}\| + \tilde{a}\tilde{\beta}^{k}\right] \\
        &\quad + \mu_H^{-1}\tfrac{(\epsilon_G + \kappa_g)}{\kappa_{\sigma}} \left[\tilde{\theta} \|d_{k-1, N_{k-1}}^{true}\| + \tilde{a}\tilde{\beta}^{k}\right] + \tilde{\gamma} \mu_H^{-1}\left[\tilde{\theta} \|d_{k-1, N_{k-1}}^{true}\| + \tilde{a}\tilde{\beta}^{k}\right]\\
        &= \tilde{a}_1 \|d_{k-1, N_{k-1}}^{true}\|  + \tilde{a}_2\tilde{\beta}^k,
    \end{align*}
    where the second inequality follows from \eqref{eq:biased_gradient_error_bound} and \cref{ass:Variance_regularity}, the third inequality follows by substituting $\kappa_G \leq \tfrac{\sqrt{\Var(\nabla F(x_{k, 0}) | \Fcal_k)}}{\kappa_{\sigma}}$ and \cref{cond:ineq_norm_test} and the last equality follows from the defined constants. Taking the total expectation of the bound yields,
    \begin{align*}
        \Embb\left[\|d_{k, N_k}^{true}\| \right]
        \leq \tilde{a}_1 \Embb\left[\|d_{k-1, N_{k-1}}^{true}\|\right]  + \tilde{a}_2\tilde{\beta}^k.
    \end{align*}
    Applying \cref{lem:linear_convergence_induction} with $Z_k = \Embb\left[\|d_{k, N_k}^{true}\| \right]$, $\rho_1 = \tilde{a}_1 $, $\rho_2 = \tilde{\beta}$ and $b = \tilde{a}_2$ completes the proof.
\end{proof}

\cref{th:outer_complexity_ineq} provides results analogous to those in \cref{th:EQ_outer_iter_complexity_step_norm}, and the parameter trade-offs discussed in \cref{th:EQ_outer_iter_complexity} remain applicable.
Unlike \cref{th:EQ_outer_iter_complexity}, while \cref{th:outer_complexity_ineq} establishes the convergence of the norm of the true search direction at a linear rate, this does not necessarily result in linear convergence of the error in KKT conditions \eqref{eq:KKT_conditions}. Although $d^{true}_{k, N_k} = 0$ guarantees that a KKT point or an infeasible stationary point has been reached, we are not aware of any result that bounds the KKT error in terms of the true search direction norm, as is possible in the setting with only equality constraints.

We now present a corollary for the simpler case of geometrically increasing batch sizes, similar to \cref{cor:Eq_geometric_batch_increase_outer_iter_complexity}. This results from setting $\theta$ (or $\tilde{\theta}$) to zero in  \cref{cond:ineq_norm_test}, removing all the parameter selection conditions in \cref{th:outer_complexity_ineq} and the variance lower bound in \cref{ass:Variance_regularity}.

\begin{corollary} \label{cor:ineq_geometric_batch_increase_outer_iter_complexity}
    Suppose the conditions of \cref{th:outer_complexity_ineq} hold.
    \begin{enumerate}
        \item For the finite-sum problem \eqref{eq:intro_deter_error_obj}: If the batch size is selected as $|S_{k}| =\lceil (1 - \beta^k)|\Scal| \rceil$ with $\beta \in (0, 1)$ and termination criterion parameters are chosen as $0 \leq \{\gamma_k\} \leq \gamma < 1$, $\epsilon_k = \omega \left(1 - \tfrac{|S_k|}{|\Scal|}\right)$ with $\omega \geq 0$, then the true search direction norm converges to zero at a linear rate across outer iterations, as expressed in \cref{th:outer_complexity_ineq} with $a_1 = \gamma$ and $a_2 = \omega + 2\mu_H^{-1}(1 + \gamma)(\omega_1 \kappa_g + \omega_2)$.
        
        \item For the expectation problem \eqref{eq:intro_stoch_error_obj}: If \cref{ass:Variance_regularity} is satisfied, the sample set size is chosen as $|S_{k+1}| = \left\lceil\tfrac{|S_k|}{\tilde{\beta}^2}\right\rceil$ with $\tilde{\beta} \in (0, 1)$ and the termination criterion parameters are chosen such that $0 \leq \{\gamma_k\} \leq \tilde{\gamma} < 1$, $\epsilon_k = \tfrac{\tilde{\omega}}{\sqrt{|S_k|}}$ where $\tilde{\omega} \geq 0$, then the true search direction norm converges to zero in expectation at a linear rate across outer iterations, as expressed in \cref{th:outer_complexity_ineq} with $\tilde{a}_1 = \tilde{\gamma}$ and $\tilde{a}_2 = \tfrac{\tilde{\omega} + \kappa_G\mu_H^{-1}(\epsilon_G + \kappa_g) + \tilde{\gamma}\mu_H^{-1} (\tilde{\omega}_1 \kappa_g + \tilde{\omega}_2)} {\sqrt{|S_0|}}$, where $|S_0|$ is the initial batch size.
    \end{enumerate}
\end{corollary}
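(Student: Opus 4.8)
The plan is to mirror the proof of \cref{cor:Eq_geometric_batch_increase_outer_iter_complexity}, replacing the KKT-error recursion of \cref{lem:main_error_bound} with the true-search-direction recursion of \cref{lem:ineq_main_error_bound} and carrying the extra factor $\mu_H^{-1}$ throughout. In both regimes the goal is to reduce the recursion to the canonical form $Z_k \le \rho_1 Z_{k-1} + b\,\rho_2^{\,k}$ with $Z_k = \|d_{k,N_k}^{true}\|$ (resp.\ its expectation) and then invoke \cref{lem:linear_convergence_induction}. Because $\theta$ (resp.\ $\tilde\theta$) is set to zero in \cref{cond:ineq_norm_test}, the coefficient multiplying $\|d_{k-1,N_{k-1}}^{true}\|$ collapses to exactly $\gamma$ (resp.\ $\tilde\gamma$), which is what eliminates the parameter-selection conditions of \cref{th:outer_complexity_ineq}.

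For the finite-sum problem, I would start from \eqref{eq:ineq_main_error_bound}, substitute the prescribed $|S_k|=\lceil(1-\beta^k)|\Scal|\rceil$ and $\epsilon_k=\omega(1-|S_k|/|\Scal|)$, and bound the two subsampled-gradient errors via \eqref{eq:deter_sampled_gradient_error}. Since $1-|S_k|/|\Scal|\le\beta^k$ and, under \cref{ass:SQP_ineq_assumptions}, $\|\nabla f(x_{k,N_k})\|\le\kappa_g$ (so that \cref{ass:Bounded_outer_gradient_strong} holds), every error term acquires a common $\beta^k$ factor. Collecting constants yields $\|d_{k,N_k}^{true}\|\le\gamma\|d_{k-1,N_{k-1}}^{true}\|+a_2\beta^k$ with $a_2=\omega+2\mu_H^{-1}(1+\gamma)(\omega_1\kappa_g+\omega_2)$, and \cref{lem:linear_convergence_induction} with $\rho_1=\gamma$, $\rho_2=\beta$, $b=a_2$ closes the argument. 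For the expectation problem I would start from \eqref{eq:ineq_main_error_bound_expec}; the prescribed recurrence $|S_{k+1}|=\lceil|S_k|/\tilde\beta^2\rceil$ gives $|S_k|\ge|S_0|\tilde\beta^{-2k}$, hence $1/\sqrt{|S_k|}\le\tilde\beta^k/\sqrt{|S_0|}$, and this common decay rate is what produces the $\tilde\beta^k$ in the final bound. The unbiased term at $x_{k,0}$ is controlled by \eqref{eq:stoch_sampled_gradient_error} and \cref{ass:gradient_errors}, while $\epsilon_k=\tilde\omega/\sqrt{|S_k|}$ contributes its own $\tilde\beta^k$ term; assembling constants gives $\Embb[\|d_{k,N_k}^{true}\| | \Fcal_k]\le\tilde\gamma\|d_{k-1,N_{k-1}}^{true}\|+\tilde a_2\tilde\beta^k$ with $\tilde a_2$ as stated. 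Taking total expectation and applying \cref{lem:linear_convergence_induction} with $\rho_1=\tilde\gamma$, $\rho_2=\tilde\beta$, $b=\tilde a_2$ finishes.

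The main obstacle is the \emph{biased} gradient-error term $\Embb[\|\nabla f(x_{k,N_k})-g_{S_k}(x_{k,N_k})\| | \Fcal_k]$, since $x_{k,N_k}$ depends on the very sample set $S_k$ and the naive bound \eqref{eq:stoch_sampled_gradient_error} does not apply. This is handled exactly as in \cref{th:outer_complexity_ineq} via the relative metric $G_{S_k}$ from \eqref{eq:G_metric_def} and inequality \eqref{eq:biased_gradient_error_bound}: the Cauchy--Schwarz step together with the CLT-scaling part of \cref{ass:Variance_regularity} ($\Embb[G_{S_k}^2]\le\kappa_G^2/|S_k|$) and the bound $\|\nabla f\|\le\kappa_g$ yields $\Embb[\|\nabla f(x_{k,N_k})-g_{S_k}(x_{k,N_k})\| | \Fcal_k]\le\kappa_G(\epsilon_G+\kappa_g)/\sqrt{|S_k|}$.

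Notably, because the batch size here is prescribed rather than adaptive, only the CLT-scaling half of \cref{ass:Variance_regularity} is invoked and the variance lower bound plays no role; this is precisely the simplification the corollary advertises relative to \cref{th:outer_complexity_ineq}. The remaining steps are routine substitutions of the defined constants, so I expect the entire argument to be a short adaptation of the equality-constrained corollary with $\|T(\cdot)\|$ replaced by $\|d^{true}\|$ and an additional $\mu_H^{-1}$ weighting on the gradient-error contributions.
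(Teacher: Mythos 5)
Your proposal is correct and follows essentially the same route as the paper's proof: both start from the recursion in \cref{lem:ineq_main_error_bound}, substitute the prescribed batch-size and tolerance sequences, bound the unbiased gradient error via \eqref{eq:deter_sampled_gradient_error} (resp.\ \eqref{eq:stoch_sampled_gradient_error}) and the biased one via \eqref{eq:biased_gradient_error_bound} with the CLT-scaling half of \cref{ass:Variance_regularity}, and close with \cref{lem:linear_convergence_induction}. Your observation that the variance lower bound is never invoked also matches the paper's own remark preceding the corollary.
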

\begin{proof}
    The proof follows the same procedure as \cref{cor:Eq_geometric_batch_increase_outer_iter_complexity} with adjustments to account for changes from \cref{lem:main_error_bound} to \cref{lem:ineq_main_error_bound} by incorporating the scaling factor $\mu_H^{-1}$, as done in \cref{th:outer_complexity_ineq}.
    The complete proof is presented in \cref{appendix:proofs}.
\end{proof}

\cref{cor:ineq_geometric_batch_increase_outer_iter_complexity} presents a geometrically increasing batch size scheme that satisfies \cref{cond:ineq_norm_test}. One can also select batch sizes sufficiently large with $\theta$ (or $\tilde{\theta}$) non-zero to ensure \cref{cond:ineq_norm_test} is satisfied, similar to \cref{lem:eq_large_enough_batch_sizes}.

\begin{lemma} \label{lem:ineq_large_enough_batch_sizes}
    Suppose Assumptions \ref{ass:gradient_errors}, \ref{ass:SQP_ineq_assumptions} and \ref{ass:MFCQ_ineq} hold in \cref{alg:Inequality_Constrained_RA_SQP}.
    \begin{enumerate}
        \item For the finite-sum problem \eqref{eq:intro_deter_error_obj}: For $k \geq 0$, \cref{cond:ineq_norm_test} is satisfied if
        \begin{equation*}
            |S_k| \geq |\Scal|\left(1 - \sqrt{\tfrac{\theta^2 \|d_{k, 0}^{true}\|^2 + a^2 \beta^{2k}}{4(\omega_1^2 \kappa_g^2 + \omega_2^2)}}\right) \quad \text{with} \quad \hat{\theta} = 1.
        \end{equation*}
        \item For the expectation problem \eqref{eq:intro_stoch_error_obj}:
        For $k \geq 0$, \cref{cond:ineq_norm_test} is satisfied if
        \begin{equation*}
            |S_k| \geq  \tfrac{\tilde{\omega}_1^2 \kappa_g^2 + \tilde{\omega}_2^2}{\tilde{\theta}^2 \|d_{k, 0}^{true}\|^2 + \tilde{a}^2 \tilde{\beta}^{2k}}.
        \end{equation*}
    \end{enumerate}
\end{lemma}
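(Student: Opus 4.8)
The plan is to follow the template of \cref{lem:eq_large_enough_batch_sizes} essentially verbatim, the sole structural change being that the optimality surrogate is now the true search direction norm $\|d_{k,0}^{true}\|$ rather than the KKT residual $\|T(x_{k,0},\lambda_{k-1,N_{k-1}})\|$. Since \cref{cond:ineq_norm_test} constrains exactly the same subsampled gradient errors as \cref{cond:eq_norm_test} and only the right-hand sides differ, the argument reduces to substituting the proposed lower bounds on $|S_k|$ into the standard sampling-error estimates and checking that the resulting inequality matches the condition. First I would record that under \cref{ass:SQP_ineq_assumptions} (which, as noted in the text, implies \cref{ass:Bounded_outer_gradient_strong}) the true gradient is uniformly bounded, $\|\nabla f(x)\| \leq \kappa_g$ for all $x \in \chi$, so every occurrence of $\|\nabla f(\cdot)\|$ in the error bounds can be replaced by $\kappa_g$.

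For the finite-sum case I would start from \eqref{eq:deter_sampled_gradient_error} applied at $x_{k,0}$, bound $\omega_1^2\|\nabla f(x_{k,0})\|^2 + \omega_2^2 \leq \omega_1^2\kappa_g^2 + \omega_2^2$, and insert the hypothesized lower bound on $|S_k|$, which (since $|S_k|\leq|\Scal|$, so squaring preserves the inequality) is equivalent to $\left(1 - \tfrac{|S_k|}{|\Scal|}\right)^2 \leq \tfrac{\theta^2\|d_{k,0}^{true}\|^2 + a^2\beta^{2k}}{4(\omega_1^2\kappa_g^2 + \omega_2^2)}$. The factor $4$ and the term $(\omega_1^2\kappa_g^2 + \omega_2^2)$ then cancel, leaving exactly $\theta^2\|d_{k,0}^{true}\|^2 + a^2\beta^{2k}$, which is the first inequality of \cref{cond:ineq_norm_test}. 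For the second inequality I would observe that $x_{k,N_k}\in\chi$, so \eqref{eq:deter_sampled_gradient_error} holds verbatim at $x_{k,N_k}$ as well; the same cancellation yields $\|\nabla f(x_{k,N_k}) - g_{S_k}(x_{k,N_k})\|^2 \leq \theta^2\|d_{k,0}^{true}\|^2 + a^2\beta^{2k}$, which is precisely the stated condition with $\hat{\theta}=1$.

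For the expectation case I would instead invoke \eqref{eq:stoch_sampled_gradient_error} at $x_{k,0}$, which is legitimate because $x_{k,0}$ is $\Fcal_k$-measurable and hence independent of the i.i.d. sample set $S_k$; bounding the numerator by $\tilde{\omega}_1^2\kappa_g^2 + \tilde{\omega}_2^2$ and inserting $|S_k| \geq \tfrac{\tilde{\omega}_1^2\kappa_g^2 + \tilde{\omega}_2^2}{\tilde{\theta}^2\|d_{k,0}^{true}\|^2 + \tilde{a}^2\tilde{\beta}^{2k}}$ gives the desired bound directly. The only point requiring genuine care, and the closest thing to an obstacle, is the finite-sum second inequality: unlike $x_{k,0}$, the iterate $x_{k,N_k}$ is correlated with $S_k$, so one cannot appeal to any in-expectation estimate. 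The argument succeeds only because \eqref{eq:deter_sampled_gradient_error} is a pointwise bound valid uniformly over all $x \in \chi$ (a consequence of the finite-sum structure and \cref{ass:gradient_errors}), which is exactly why the hypothesis fixes $\hat{\theta}=1$ and why the finite-sum condition is posed as the stronger two-sided requirement discussed in \cref{remark:norm_test_conditions}.
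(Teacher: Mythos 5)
Your proposal is correct and follows essentially the same route as the paper's proof: apply \eqref{eq:deter_sampled_gradient_error} (resp. \eqref{eq:stoch_sampled_gradient_error}) at $x_{k,0}$, bound $\|\nabla f(\cdot)\|$ by $\kappa_g$ via \cref{ass:SQP_ineq_assumptions}, and substitute the hypothesized lower bound on $|S_k|$ so the constants cancel, with the $x_{k,N_k}$ case handled identically because the finite-sum bound is pointwise over $\chi$. Your explicit remark on why the correlation between $x_{k,N_k}$ and $S_k$ is harmless is a point the paper leaves implicit ("can be verified in the same way"), but it is the same argument.
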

\begin{proof}
    The proof follows the same procedure as \cref{lem:eq_large_enough_batch_sizes} with suitable adjustments for \cref{cond:ineq_norm_test}.
    The complete proof can be found in \cref{appendix:proofs}.
\end{proof}

\cref{lem:ineq_large_enough_batch_sizes}, while somewhat pessimistic, demonstrates that there exists a sequence of batch sizes ${|S_k|}$ that satisfies \cref{cond:ineq_norm_test}. Although our analysis focuses on \cref{alg:RA_base} with a specific deterministic solver for inequality constraints, it can be readily extended to other deterministic solvers used within the inner loop, provided an appropriate termination criterion is defined. However, in contrast to the case with only equality constraints, the empirical performance in the presence of inequality constraints is significantly more sensitive to these choices.
\section{Numerical Experiments} \label{sec:Numerical_experiments}

In this section, we present numerical results that illustrate the empirical performance of the proposed algorithms.
All methods were implemented by the authors in Python\footnote{Our code will be made publicly available upon publication of the manuscript within the GitHub repository: \url{https://github.com/SANDOPT/Gradient-Tracking-Algorithmic-Framework.git}. All our experiments were conducted on a PowerEdge R760 server equipped with two Intel Xeon Gold 16-core processors and 256 GB of RDIMM memory}.
We begin by introducing the test problems, followed by a discussion of key implementation considerations for the adaptive sampling strategy. We report numerical results, first for problems with only equality constraints (\cref{sec:Equality_constrained_numerical_exps}), and then for problems involving general nonlinear constraints (\cref{sec:Inequality_constrained_numerical_exps}). Implementation details for all methods are discussed in the respective subsections.

We evaluate the algorithms on two types of problems: regularized multi-class logistic regression tasks, and  problems from the S2MPJ CUTEst problem set \cite{gratton2024s2mpj}. For the classification tasks, the objective function is defined using a logistic regression model with a cross-entropy loss over a set of classes $\Kcal$, and is given by
\begin{equation} \label{eq:logreg_obj}
    f(x) = \frac{1}{|\Scal|} \sum_{(y, t) \in \Scal} \sum_{i \in \Kcal} t^i \log\left(\frac{1}{1 + \exp(-y^T x^i)}\right),
\end{equation}
where $\Scal$ denotes the set of samples, and each sample $(y, t) \in \Scal$ consists of a feature vector $y \in \Rmbb^{n_f}$ (where $n_f$ is the number of features including the bias term) and a label vector $t \in \{0, 1\}^{|\Kcal|}$, with $t^i = 1$ if the sample belongs to class $i \in \Kcal$, and $t^i = 0$ otherwise. Each $x^i \in \Rmbb^{n_f}$ represents the classifier corresponding to class $i \in \Kcal$, and the variable $x \in \Rmbb^{n}$ is the concatenation of classifiers for all classes with $n = n_f |\Kcal|$. We report results on the covetype dataset ($n_f = 55$, $|\Kcal| = 7$, $|\Scal| = 581,012$) and the mnist dataset ($n_f = 781$, $|\Kcal| = 10$, $|\Scal| = 60,000$) \cite{CC01a}. Constraints for these problems are described later.

The objective function for problems from the S2MPJ CUTEst set \cite{gratton2024s2mpj} are augmented as,
\begin{equation} \label{eq:cutest_problem}
    F(x, \xi) = f(x) + \xi \|x - x_{init} - e_n\|^2,
\end{equation}
where $f(x)$ is the deterministic objective function of the true problem, $x_{init}$ 
is the starting point and $\xi$ is a uniformly distributed random variable in $[-0.1, 0.1]$ independent of $x$, ensuring $\Embb[F(x, \xi)] = f(x)$. This augmentation differs from those considered in \cite{berahas2021sequential,curtis2023sequential,o2024two,berahas2025sequential}, as the noise in the objective function now depends on the current iterate and increases as the algorithm moves farther from the initial point. This results in an adversarial setting that demands more accurate function estimates to achieve good empirical performance. Additionally, the offset in $\|x - x_{init} - e_n\|^2$ prevents the algorithms from receiving perfect information at the initial point. The constraints remain the deterministic constraints specified in the problem set.

\subsection{Adaptive Sampling Implementation}

This section outlines the practical implementation of our adaptive sampling strategy 
for Algorithms \ref{alg:Equality_Constrained_RA}, \ref{alg:Equality_Constrained_RA_SQP} and \ref{alg:Inequality_Constrained_RA_SQP}.
In practice, the finite-sum problem \eqref{eq:intro_deter_error_obj} is treated as an expectation problem \eqref{eq:intro_stoch_error_obj} within the adaptive sampling procedure. The adaptive sampling conditions at the start of outer iteration $k\geq0$ can be expressed in general as $\Embb[\|g_{S_k}(x_{k, 0}) - \nabla f (x_{k, 0})\|^2] \leq \tilde{\theta}^2 Z_k^2 + \tilde{a}^2 \tilde{\beta}^{2k}$, where $Z_k$ represents the true problem quantity in the adaptive sampling condition, determined by the termination criterion $\Tcal_k$ used in the inner loop. Specifically, for \cref{cond:eq_norm_test}, $Z_k = \|T(x_{k, 0}, \lambda_{k-1, N_{k-1}})\|$; for \cref{cond:eq_norm_test_d}, $Z_k = \|d_{k, 0}^{true}\|$; for \cref{cond:eq_norm_test_delta_l}, $Z_k^2 = \Delta l (x_{k, 0}, \tau_{k, 0}^{true}, d_{k, 0}^{true})$; and for \cref{cond:ineq_norm_test}, $Z_k = \|d_{k, 0}^{true}\|$.
The sampling condition can be rearranged as
\begin{align*}
    \tfrac{Var(\nabla F(x_{k, 0}, \xi) | x_{k, 0})}{|S_k|} &\leq \tilde{\theta}^2 Z_k^2 + \tilde{a}^2 \tilde{\beta}^{2k} \quad \Rightarrow \quad
    |S_k| \geq \tfrac{Var(\nabla F(x_{k, 0}, \xi) | x_{k, 0})}{\tilde{\theta}^2 Z_k^2 + \tilde{a}^2 \tilde{\beta}^{2k}},
\end{align*}
where sample estimates are used to approximate the terms in the condition, similar to \cite{berahas2022adaptive,o2024fast,bollapragada2018adaptive,byrd2012sample}. 

Unlike earlier works, we cannot use estimates from the previous outer iteration to approximate the condition. This is because multiple inner iterations in outer iteration $k-1$ introduce a strong bias in the new starting point $x_{k, 0}$ toward the sample set $S_{k-1}$. To mitigate this, we instead draw a new sample set $\tilde{S}_{k-1}$, independent of $x_{k, 0}$, with $|\tilde{S}_{k-1}| = |S_{k-1}|$. We then use the sample variance over $\tilde{S}_{k-1}$ and  $\tilde{Z}_k$ as an estimate for $Z_k$ to approximate the condition as
\begin{align*}
    |S_k| &\geq \tfrac{Var_{\xi \in \tilde{S}_{k-1}}(\nabla F(x_{k, 0}, \xi) | x_{k, 0})}{\tilde{\theta}^2 \tilde{Z}_{k}^2 + \tilde{a}^2 \tilde{\beta}^{2k}}.
\end{align*}
For \cref{cond:eq_norm_test}, $\tilde{Z}_k = \|T_{\tilde{S}_{k-1}}(x_{k, 0}, \lambda_{k-1, N_{k-1}})\|$ can be readily computed as an estimate of $Z_k$. For Conditions \ref{cond:eq_norm_test_d}, \ref{cond:eq_norm_test_delta_l}, and \ref{cond:ineq_norm_test}, we form the subsampled problem $F_{\tilde{S}_{k-1}}$ and perform a single inner loop iteration initialized at $x_{k, 0}$ to compute the corresponding value of $\tilde{Z}_k$. No updates are made during this iteration. This adds the computational cost of one inner iteration, but incurs no additional gradient cost, as the computed gradients can be reused in the first inner iteration of outer iteration $k$. To enable this, the sample set $S_k$ is constructed by augmenting $\tilde{S}_{k-1}$ with additional samples, as needed, ensuring that $\tilde{S}_{k-1} \subseteq S_k$. 

The geometric component in the adaptive sampling condition is intended to prevent large, erratic increases in batch sizes that may result from noisy sample estimates. In practice, we replace this component with a hard geometric increase limit, restricting batch size growth to at most a factor of $\hat{\beta}>1$ between successive outer iterations.
Additionally, batch sizes are enforced to be non-decreasing and are bounded above by the total number of samples in the finite-sum problem \eqref{eq:intro_deter_error_obj}. Therefore, the batch size at outer iteration $k$ is selected as
\begin{equation*}
    |S_k| = \min\left\{|\Scal|, \hat{\beta}|S_{k-1}|, \max\left\{|S_{k-1}|, \left\lceil \tfrac{Var_{\xi \in \tilde{S}_{k-1}}(\nabla F(x_{k, 0}, \xi) | x_{k, 0})}{\tilde{\theta}^2 \tilde{Z}_{k}^2}\right\rceil \right\}\right\}.
\end{equation*}
We set $\tilde{\theta} = 0.5$ and $\hat{\beta} = 5$ for the proposed algorithms and the batch size is initialized at 32. While prior works such as \cite{berahas2022adaptive, bollapragada2018adaptive, o2024fast} often use larger values of $\tilde{\theta}$, we prefer a smaller value to allow for more significant adjustments to the batch size,
given that multiple updates are performed between successive evaluations of the sampling condition.

\subsection{Equality Constrained Problems} \label{sec:Equality_constrained_numerical_exps}
We first consider problems with only general nonlinear equality constraints. We evaluate the performance of five variants of \cref{alg:Equality_Constrained_RA_SQP}, each referred to as ``RA-SQP" with additional descriptors, differing across three key characteristics. 
First, the termination criterion $\Tcal_k$ is either based on the norm of the search direction \eqref{eq:termination_criterion_d}, labeled ``$\|d\|$" (with $\gamma_k = \gamma = 0.5$), or on the decrease in the merit function model \eqref{eq:termination_criterion_delta_l}, labeled ``$\Delta l$" (with $\gamma_k = \gamma = 0.1$). 
Second, the Hessian of the Lagrangian is approximated either by the identity matrix (default) or using an L-BFGS update (with memory size as $\min\{n, 10\}$), labeled ``L-BFGS". 
Third, the linear system in the \texttt{SQP} subproblem \eqref{eq:eq_sqp_system} is solved via MINRES either exactly (default option to a relative error tolerance of $10^{-6}$) or approximately, using inexact solutions that satisfy \eqref{eq:eq_inexactness_condition_1} or \eqref{eq:eq_inexactness_condition_2} with $\kappa_{T} = 10^{-1}$, $\epsilon_{feas} = 10^{-4}$ and $\epsilon_{opt} = 10^{-4}$, as discussed in \cref{sec:SQP_based_algorithm_eq}, and labeled ``Inexact". For the termination criterion based on $\|d\|$ \eqref{eq:termination_criterion_d}, we only report results with the identity Hessian and exact linear system solutions, as the results for other configurations are not competitive. To avoid solving subsampled subproblems to unnecessary accuracy, the additional tolerance in the termination criterion is set to $\epsilon_k = 10^{-6}$.
Furthermore, to prevent excessive computation on subproblems, we limit the number of inner iterations per outer iteration to a maximum of $500$ for reasons discussed in \cref{sec:SQP_based_algorithm_eq_theory}.

We implement and compare against the stochastic \texttt{SQP} method \cite{berahas2021sequential}, labeled ``S-SQP", and the adaptive sampling \texttt{SQP} method \cite{berahas2022adaptive}, labeled ``AS-SQP". We also include comparisons with the deterministic \texttt{SQP} method \cite{nocedal2006numerical,berahas2021sequential}, labeled ``SQP", applied to the finite-sum logistic regression problems. In all three baselines, ``SQP", ``S-SQP", and ``AS-SQP", we use the identity matrix as the Lagrangian Hessian approximation and solve the \texttt{SQP} linear system exactly. For ``S-SQP", we use a batch size of $1024$, which is larger than commonly used, to ensure the method reaches solutions with accuracy comparable to the proposed \texttt{RA}-based algorithms. For ``AS-SQP", the batch size is initialized at $32$ and increases according to the adaptive sampling conditions of the algorithm.

We employ the MINRES algorithm to solve the \texttt{SQP} linear system using its scipy implementation \cite{2020SciPy-NMeth}. To evaluate algorithm performance, we track two metrics: the constraint violation $\|c(x)\|_{\infty}$ and the norm of the Lagrangian gradient $\|\nabla_x \Lcal(x, \lambda^*)\|_{\infty}$, where $\lambda^* \in \Rmbb^m$ is the dual variable that minimizes the KKT error for the true problem at $x$. These metrics are plotted against two cost measures: the number of gradient evaluations and the total number of MINRES iterations used to solve the linear systems. For the RA-based algorithms, the MINRES count also includes the cost of solving the subproblem used to estimate quantities in the sampling condition.

\subsubsection{Logistic Regression}

We consider the logistic regression multi-class classification problem \eqref{eq:logreg_obj} with equality regularization constraints, i.e., 
$\|x^i\|^2 = 1$ for all $i \in \Kcal$.
In Figures \ref{fig:covtype_equality} and \ref{fig:mnist_equality}, we present comparisons against existing methods on the covtype and mnist datasets, respectively. For a fair comparison, all algorithms use the identity matrix as the approximation for the Lagrangian Hessian. When comparing ``RA-SQP $\|d\|$" and ``RA-SQP $\Delta l$" with algorithms from the literature, all of which use exact solutions to the \texttt{SQP} linear system, our algorithms achieve better performance in terms of the number of gradient evaluations and are comparable to deterministic \texttt{SQP} in terms of MINRES iterations. Furthermore, the flexibility of our framework is evident from the improved performance of ``RA-SQP $\Delta l$ Inexact", which uses inexact solutions to the \texttt{SQP} linear system. This variant significantly reduces the number of MINRES iterations compared to ``RA-SQP $\Delta l$", demonstrating how performance can be enhanced by modifying the underlying deterministic \texttt{SQP} method.

\begin{figure}[H]
    \centering
    \includegraphics[width=\textwidth]{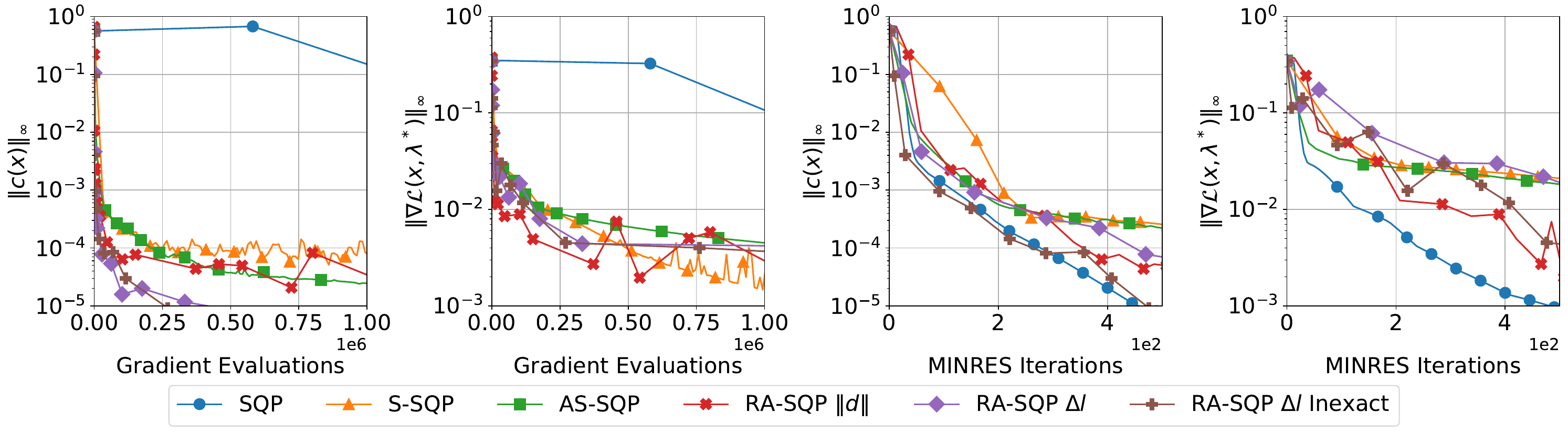}
    \caption{Constraint violation ($\|c(x)\|_{\infty}$) and Lagrangian gradient norm ($\|\nabla_x \Lcal(x,\lambda^*)\|_{\infty}$) with optimized dual variable $\lambda^*$, with respect to number of gradient evaluations and number of MINRES iterations for stochastic \texttt{SQP} ( ``S-SQP" \cite{berahas2021sequential}), adaptive sampling \texttt{SQP} (``AS-SQP" \cite{berahas2022adaptive}), deterministic \texttt{SQP} (``SQP" \cite{nocedal2006numerical,berahas2021sequential}) and our proposed algorithms ``RA-SQP $\|d\|$", ``RA-SQP $\Delta l$", and ``RA-SQP $\Delta l$ Inexact" over the multi-class logistic regression problem \eqref{eq:logreg_obj} with equality regularization constraints for the covtype dataset ($n_f = 55$, $|\Kcal| = 7$, $|\Scal| = 581,012$, \cite{CC01a}).}
    \label{fig:covtype_equality}
\end{figure}

\begin{figure}[H]
    \centering
    \includegraphics[width=\textwidth]{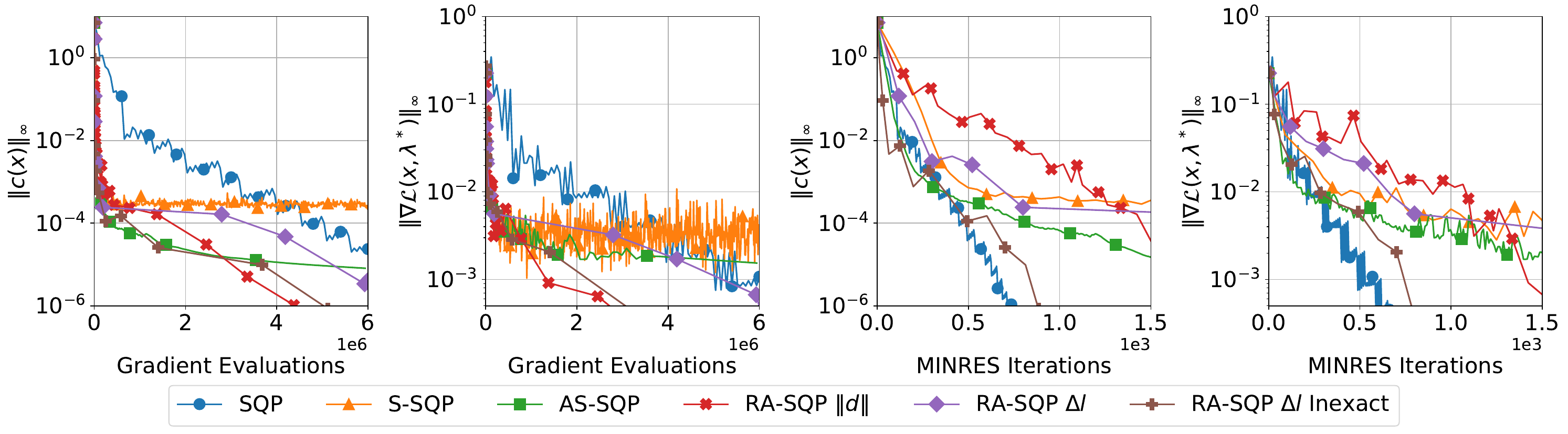}
    \caption{Constraint violation ($\|c(x)\|_{\infty}$) and Lagrangian gradient norm ($\|\nabla_x \Lcal(x,\lambda^*)\|_{\infty}$) with optimized dual variable $\lambda^*$, with respect to number of gradient evaluations and number of MINRES iterations for stochastic \texttt{SQP} ( ``S-SQP" \cite{berahas2021sequential}), adaptive sampling \texttt{SQP} (``AS-SQP" \cite{berahas2022adaptive}), deterministic \texttt{SQP} (``SQP" \cite{nocedal2006numerical,berahas2021sequential}) and our proposed algorithms ``RA-SQP $\|d\|$", ``RA-SQP $\Delta l$", and ``RA-SQP $\Delta l$ Inexact" over the multi-class logistic regression problem \eqref{eq:logreg_obj} with equality regularization constraints for the mnist dataset ($n_f = 781$, $|\Kcal| = 10$, $|\Scal| = 60,000$, \cite{CC01a}).}
    \label{fig:mnist_equality}
\end{figure}

In Figure \ref{fig:mnist_equality_L-BFGS}, we present results for our proposed algorithms on the mnist dataset, highlighting the flexibility of the \texttt{RA} framework and the performance gains achievable through enhancements such as the L-BFGS Hessian approximation and inexact solutions to the \texttt{SQP} linear system. 
Incorporating the L-BFGS approximation improves performance in terms of gradient evaluations compared to the identity Hessian counterparts. 
However, adopting the L-BFGS approximation results in harder to solve \texttt{SQP} linear systems for the higher quality search directions as depicted with the increase in MINRES iterations when calculating exact solutions. Adopting inexact solutions to the linear system with the L-BFGS approximation reduces this cost.
These results demonstrate how techniques from deterministic \texttt{SQP} methods can be seamlessly integrated into the \texttt{RA} framework to enhance its effectiveness on stochastic optimization problems.

\begin{figure}[H]
    \centering
    \includegraphics[width=\textwidth]{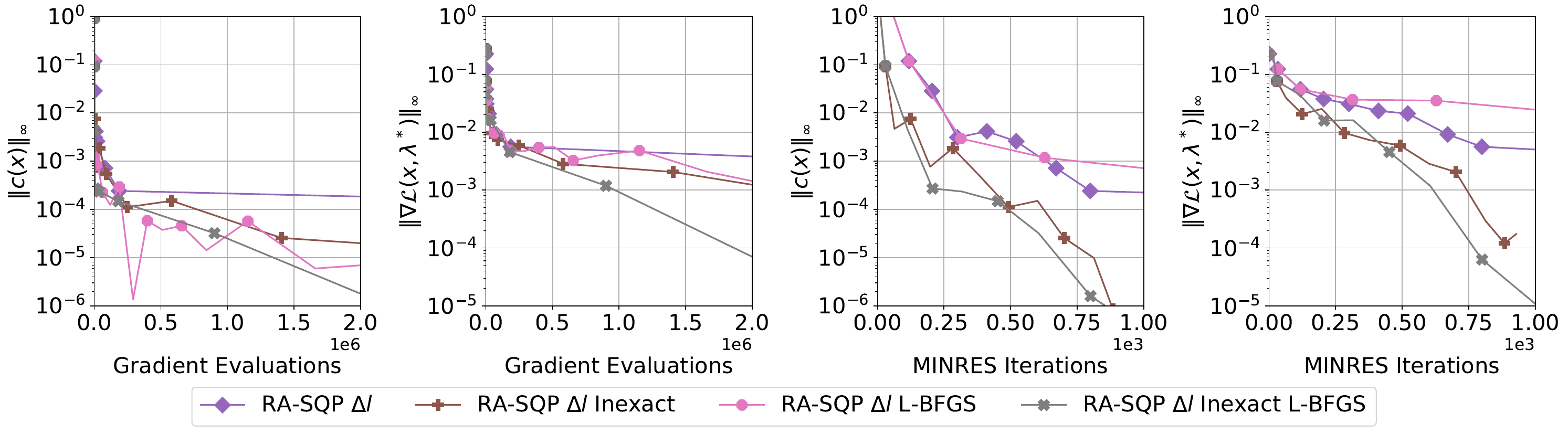}
    \caption{Constraint violation ($\|c(x)\|_{\infty}$) and Lagrangian gradient norm ($\|\nabla_x \Lcal(x,\lambda^*)\|_{\infty}$) with optimized dual variable $\lambda^*$, with respect to number of gradient evaluations and number of MINRES iterations for ``RA-SQP $\Delta l$" with and without L-BFGS approximations and exact and inexact 
    \texttt{SQP} linear system solutions over the multi-class logistic regression problem \eqref{eq:logreg_obj} with equality regularization constraints for the mnist dataset ($n_f = 781$, $|\Kcal| = 10$, $|\Scal| = 60,000$, \cite{CC01a}).}
    \label{fig:mnist_equality_L-BFGS}
\end{figure}

In Figure \ref{fig:mnist_equality_L-BFGS_inner_iterations}, we present how the number of inner iterations $(N_k)$ and the batch size ($|S_k|$) change across outer iterations and with respect to the number of gradient evaluations for the results presented in Figure \ref{fig:mnist_equality_L-BFGS}. First, the number of inner iterations increases during the initial phase for all methods and subsequently stabilizes, never reaching the upper limit of 500 inner iterations. This behavior is desirable, as it aligns with the growth of the batch size.
Second, while the rate of increase for the batch sizes is different for each method across outer iterations depending on the progress made, it is relatively consistent when measured against the number of gradient evaluations.
This adaptive behavior is one of the main reasons for the good empirical performance of the framework with respect to the number of gradient evaluations.
Third, the methods that use an L-BFGS approximation perform fewer inner iterations, as expected.
Finally, when the L-BFGS approximation is used, the batch size increases slower than when the identity matrix is used across outer iterations when exact solutions to the \texttt{SQP} linear system are employed. Conversely, the increase is faster when inexact solutions are employed.
As discussed in \cref{sec:General_Deterministic_Solver_theory}, ``RA-SQP $\Delta l$ L-BFGS" frequently solves the subsampled problems to a higher accuracy than required by the inner loop termination test $\Tcal_k$, introducing additional bias in the solutions. In such cases, a slower increase in the batch size works better. 
Alternatively, a faster batch size increase can be employed by decreasing $\Tilde{\theta}$ to take full advantage of this behavior. We note again that we do not tune the value of $\tilde{\theta}$ across methods to display the robustness of the framework.

\begin{figure}[H]
    \centering
    \includegraphics[width=\textwidth]{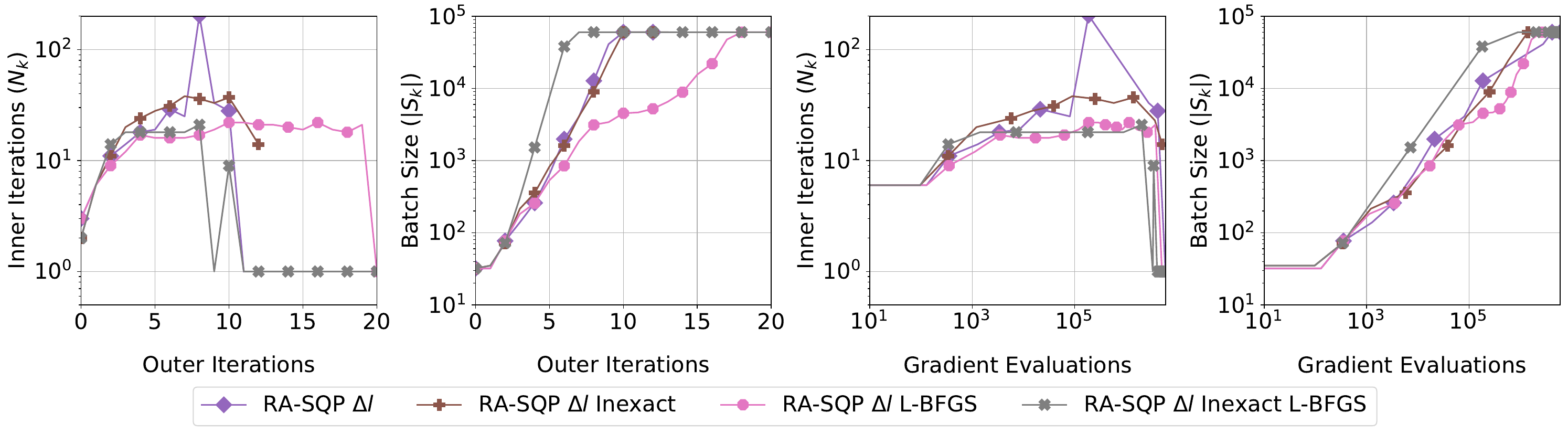}
    \caption{Number of Inner Iterations ($N_k$) and Batch Size ($|S_k|$) with respect to outer iterations and number of gradient evaluations for ``RA-SQP $\Delta l$" with and without L-BFGS approximations and exact and inexact \texttt{SQP} linear system solutions over the multi-class logistic regression problem \eqref{eq:logreg_obj} with equality regularization constraints for the mnist dataset ($n_f = 781$, $|\Kcal| = 10$, $|\Scal| = 60,000$, \cite{CC01a}).}
    \label{fig:mnist_equality_L-BFGS_inner_iterations}
\end{figure}

\subsubsection{CUTEst Problems}

In this section, we evaluate the empirical performance of \cref{alg:Equality_Constrained_RA_SQP} on the CUTEst problem set, using the augmented formulation in \eqref{eq:cutest_problem}, and compare our results with methods from the literature. We focus on problems that have only equality constraints, satisfy the constraint qualifications (verified via solutions obtained using deterministic \texttt{SQP}), possess a non-constant objective, and have total dimension $n + m < 1000$. This filtering yields a benchmark suite of $88$ problems. For each problem, we perform $10$ runs with different random seeds, resulting in a total of $880$ problem instances per algorithm. Each run is allotted a gradient evaluation budget of $10^6$.

In Figure \ref{fig:CUTEST_equality}, we present the results using Dolan-Mor\'e performance profiles \cite{dolan2002benchmarking} with respect to the number of gradient evaluations and MINRES iterations, evaluating both feasibility and stationarity errors. The performance profiles are constructed as follows. For each cost metric, an algorithm is considered to have successfully solved a problem instance in a given seed run at iterate $x_{out}$,  starting from initial point $x_{init}$, if it satisfies a prescribed tolerance $\epsilon_{tol} \in (0, 1)$. Specifically, the algorithm is deemed to have solved a given problem with respect to feasibility at $x_{out}$ if $\|c(x_{out})\|_{\infty} \leq \epsilon_{tol} \max\{1, \|c(x_{init})\|_{\infty}\}$. The problem is further considered solved with respect to stationarity at $x_{out}$ if it satisfies the feasibility criterion and, in addition, $\|\nabla_x \Lcal(x_{out},\lambda_{out}^*)\|_{\infty} \leq \epsilon_{tol} \max\{1, \|\nabla_x \Lcal(x_{init}, \lambda^*_{init})\|_{\infty}\}$, where $\lambda_{out}^*$ and $\lambda^*_{init}$ denote the dual variables that minimize the true problem's KKT error at $x_{out}$ and $x_{init}$ respectively. Overall, across all tolerance levels, the \texttt{RA}-based algorithms demonstrate strong initial progress and exhibit greater robustness. The results also highlight the performance gains from incorporating the L-BFGS Hessian approximation, particularly at tighter tolerance levels where higher solution accuracy is required. 

\begin{figure}[H]
    \centering
    \includegraphics[width=\textwidth]{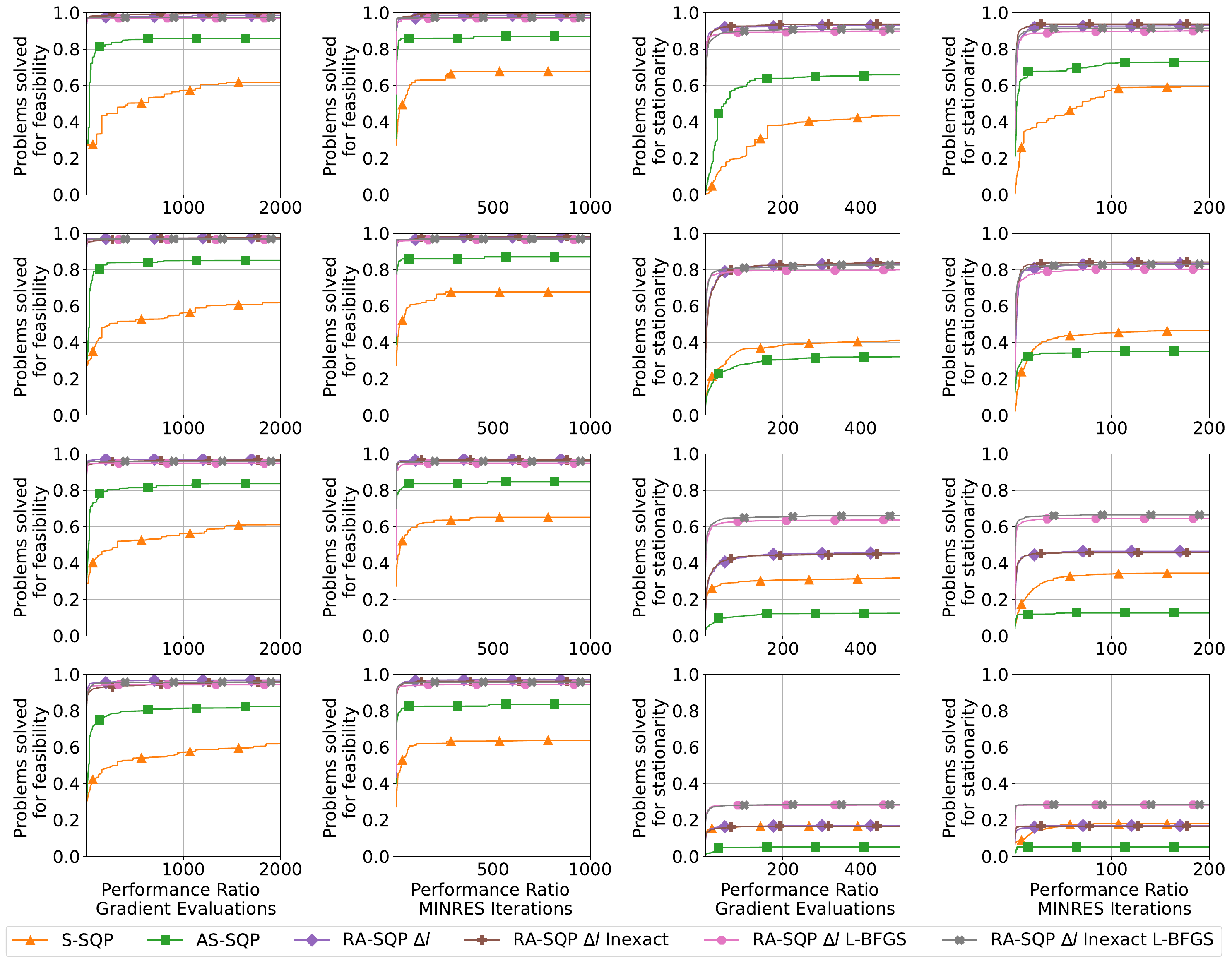}
    \caption{Performance profiles for feaibility and stationarity errors with respect to number of gradient evaluations and number of MINRES iterations for stochastic \texttt{SQP} ( ``S-SQP" \cite{berahas2021sequential}), adaptive sampling \texttt{SQP} (``AS-SQP" \cite{berahas2022adaptive}) and our proposed algorithm ``RA-SQP $\Delta l$" with and without L-BFGS Hessian approximations and exact and inexact \texttt{SQP} linear system solutions over the 
    CUTEst problem set \cite{gratton2024s2mpj} 
    to accuracy levels $\epsilon_{tol} \in \{10^{-1}, 10^{-2}, 10^{-3}, 10^{-4}\}$ that are decreasing going down the rows.}
    \label{fig:CUTEST_equality}
\end{figure}

\subsection{Inequality Constrained Problems} \label{sec:Inequality_constrained_numerical_exps}
In this section, we illustrate the empirical performance of \cref{alg:Inequality_Constrained_RA_SQP} for solving problems with general nonlinear constraints. We consider regularized logistic regression multi-class classification problems as well as 
the CUTEst problem set, and compare our methods with algorithms from the literature. We also empirically evaluate the stability of the active set in the \texttt{RA} framework across outer iterations.

We implement and compare our algorithm with stochastic \texttt{SQP} \cite{curtis2023sequential} (labeled ``S-SQP") and stochastic robust \texttt{SQP} \cite{qiu2023sequential} (labeled ``Robust-S-SQP"). We also include comparisons with deterministic robust \texttt{SQP} methods \cite{burke1989robust,qiu2023sequential}, using the $l_\infty$ norm (labeled ``Robust-SQP-$l_\infty$") and the $l_1$ norm (labeled ``Robust-SQP-$l_1$") for the finite-sum logistic regression problems. For fair comparison, all baseline algorithms use the identity matrix as the approximation to the Lagrangian Hessian. The batch size for both ``S-SQP" and ``Robust-SQP" is set to 1024, larger than typically used, to allow these methods to attain solution accuracies comparable to those achieved by the proposed \texttt{RA}-based algorithms.

We present two versions of \cref{alg:Inequality_Constrained_RA_SQP}, each differing in the robust \texttt{SQP} method employed within the inner loop. The first uses the $l_\infty$ norm and is labeled ``RA-SQP-$l_\infty$", while the second employs the $l_1$ norm and is labeled ``RA-SQP-$l_1$". Both versions use the termination criterion \eqref{eq:termination_criterion_ineq}, based on the search direction norm, with $\gamma_k = \gamma = 0.5$, and use the identity matrix to approximate the Lagrangian Hessian. Following the setup from the equality-constrained case, to prevent excessive computation on subsampled problems, we set $\epsilon_k = 10^{-6}$ and 
cap the number of inner iterations per outer iteration at 500 for reasons discussed in \cref{sec:Inquality_constrained_analysis}.
The bound on the search direction norm in the \texttt{SQP} subproblems is set as $\sigma_d = 2\sigma_p$, where $\sigma_p$ is proportional to the constraint violation, similar to the strategy in \cite{qiu2023sequential}. Specifically, for the $l_{\infty}$ norm, we set $\sigma_p = 10\left\|\begin{matrix}c_E(x) \\ [c_I(x)]_+ \end{matrix}\right\|_{\infty}$, while ensuring $\sigma_p \in [10^2, 10^4]$. For the $l_1$ norm, we set  $\sigma_p = 10\left\|\begin{matrix}c_E(x) \\ [c_I(x)]_+ \end{matrix}\right\|_{1}$, constrained to the range $[n \cdot 10^2, n \cdot 10^4]$ scaling the range by the dimension of the problem to account for the scaling difference in $l_1$ and $l_\infty$ norms.

We employ GUROBI \cite{gurobi} to solve the linear and quadratic programs that arise in the \texttt{SQP} subproblems. The progress of the algorithms is evaluated using two primary metrics: the constraint violation, defined as $ \|c(x)\|_{\infty}= \left\|\begin{matrix}c_E(x) \\ [c_I(x)]_+ \end{matrix}\right\|_{\infty}$, and the KKT residual, which is computed at the point $x$ as the objective value of the linear program
\begin{align*}
    KKT(x) = \min_{\substack{t \in \Rmbb, \lambda_E \in \mathbb{R}^{m_E}, \lambda_I \in \mathbb{R}^{m_I}}} & t \numberthis \label{eq:KKT_residual_ineq} \\
    s.t. \quad \qquad & \|\nabla f(x) + \nabla c_E(x) \lambda_E + \nabla c_I(x) \lambda_I\|_{\infty} \leq t, \\
    & \|\lambda_I \odot c_I(x)\|_{\infty} \leq t, \quad \lambda_I \geq 0,
\end{align*}
that measures violation of the general KKT conditions \eqref{eq:KKT_conditions} using the best dual variables.
These metrics are evaluated with respect to two computational costs: the number of gradient evaluations and the number of Barrier method iterations performed by GUROBI in solving the associated linear and quadratic programs. For the \texttt{RA}-based algorithms, the Barrier iterations include those required to solve subproblems for estimating quantities in the sampling condition.

\subsubsection{Logistic Regression}
We evaluate the empirical performance of \cref{alg:Inequality_Constrained_RA_SQP} on the logistic regression multi-class classification problem \eqref{eq:logreg_obj} with inequality regularization constraints, i.e., $\|x^i\|^2 \leq 1$ for all $i \in \Kcal$.
In Figures \ref{fig:covtype_inequality} and \ref{fig:mnist_inequality}, we compare our algorithm with methods from the literature on the covtype and MNIST datasets, respectively. When comparing ``RA-SQP-$l_\infty$" and ``RA-SQP-$l_1$" with existing methods, our algorithms outperform the others in terms of the number of gradient evaluations and achieve performance close to deterministic \texttt{SQP} methods with respect to Barrier iterations.
Furthermore, we observe a difference in Barrier iteration performance between ``RA-SQP-$l_1$" and ``RA-SQP-$l_\infty$", with the $l_\infty$ variant performing fewer Barrier iterations. This is likely due to the smaller size of the \texttt{SQP} subproblems when using the $l_\infty$ norm, which reduces solver overhead without affecting performance with respect to gradient evaluations.

\begin{figure}[H]
    \centering
    \includegraphics[width=\textwidth]{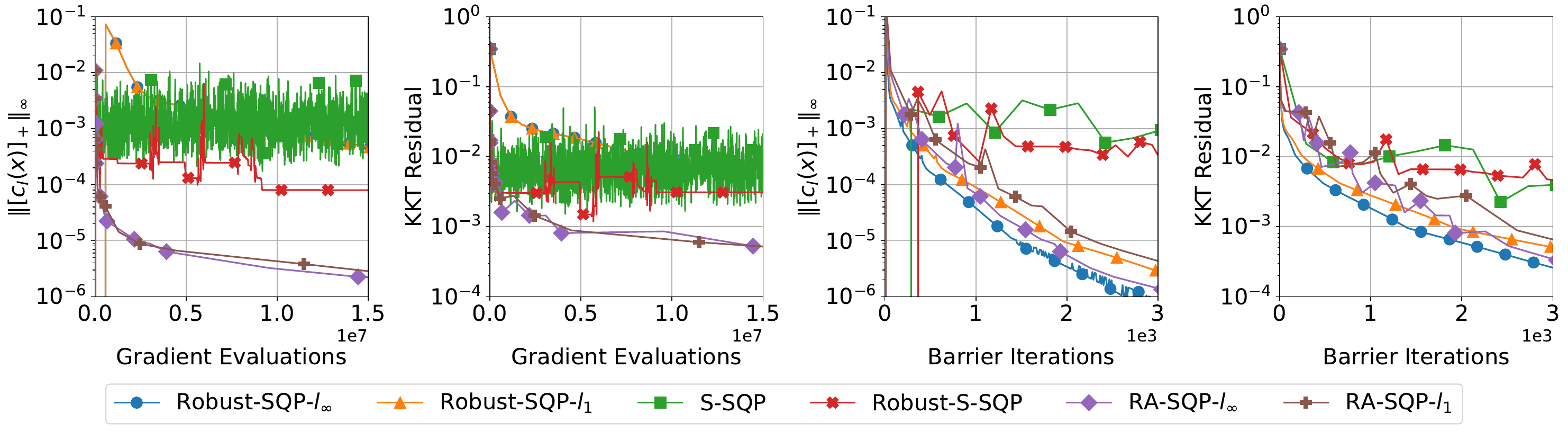}
    \caption{Constraint violation ($\|[c_I(x)]_+\|_{\infty}$) and KKT residual \eqref{eq:KKT_residual_ineq} with respect to number of gradient evaluations and number of Barrier iterations for stochastic \texttt{SQP} ( ``S-SQP" \cite{curtis2023sequential}), robust stochastic \texttt{SQP} ( ``Robust-S-SQP" \cite{qiu2023sequential}), deterministic robust \texttt{SQP} (``Robust-SQP-$l_\infty$" and ``Robust-SQP-$l_1$" \cite{burke1989robust,qiu2023sequential}) and our proposed algorithms ``RA-SQP-$l_\infty$" and ``RA-SQP-$l_1$" over the multi-class logistic regression problem \eqref{eq:logreg_obj} with inequality regularization constraints for the covetype dataset ($n_f = 55$, $|\Kcal| = 7$, $|\Scal| = 581,012$, \cite{CC01a}).}
    \label{fig:covtype_inequality}
\end{figure}

\begin{figure}[H]
    \centering
    \includegraphics[width=\textwidth]{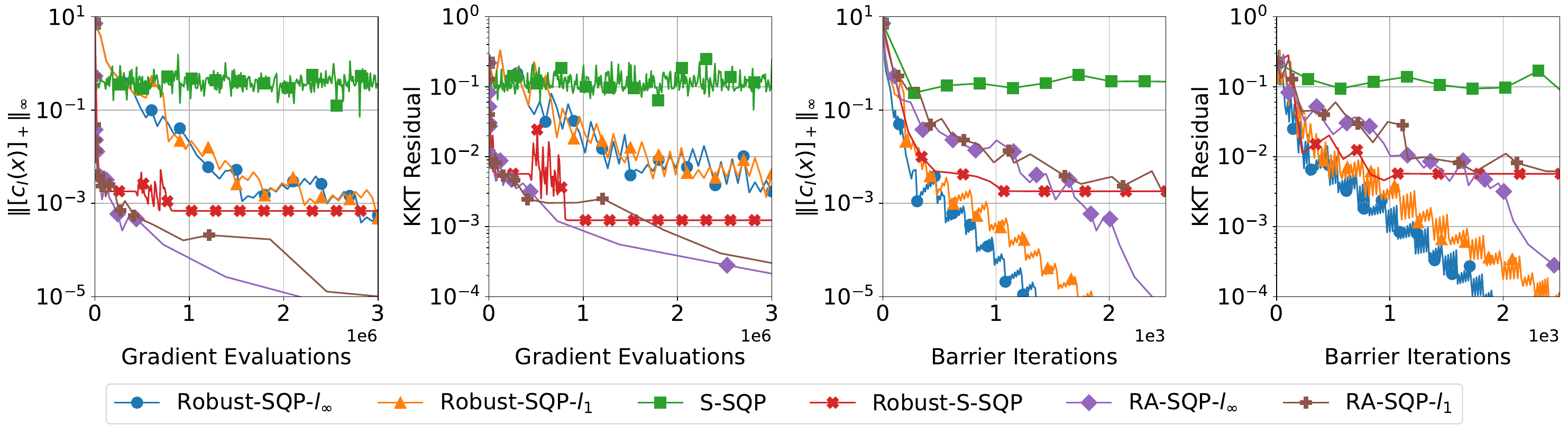}
    \caption{Constraint violation ($\|[c_I(x)]_+\|_{\infty}$) and KKT residual \eqref{eq:KKT_residual_ineq} with respect to number of gradient evaluations and number of Barrier iterations for stochastic \texttt{SQP} ( ``S-SQP" \cite{curtis2023sequential}), robust stochastic \texttt{SQP} ( ``Robust-S-SQP" \cite{qiu2023sequential}), deterministic robust \texttt{SQP} (``Robust-SQP-$l_\infty$" and ``Robust-SQP-$l_1$" \cite{burke1989robust,qiu2023sequential}) and our proposed algorithms ``RA-SQP-$l_\infty$" and ``RA-SQP-$l_1$" over the multi-class logistic regression problem \eqref{eq:logreg_obj} with inequality regularization constraints for the mnist dataset ($n_f = 781$, $|\Kcal| = 10$, $|\Scal| = 60,000$, \cite{CC01a}).}
    \label{fig:mnist_inequality}
\end{figure}

\subsubsection{CUTEst Problems} 

In this section, we evaluate the empirical performance of \cref{alg:Inequality_Constrained_RA_SQP} on the CUTEst problem set in the form \eqref{eq:cutest_problem}, and compare our algorithms with methods from the literature. We consider problems that contain general inequality constraints (excluding problems with only bound constraints), satisfy constraint qualifications (verified using a deterministic \texttt{SQP} solver), and have a non-constant objective function. Additionally, we restrict the problem size to $n + m_E + m_I < 2000$, resulting in a final set of $248$ problems. For each problem, we perform $10$ independent runs with different random seeds, totaling $2480$ runs per algorithm. A gradient evaluation budget of $10^6$ is allocated for each problem instance and seed run.

In Figure \ref{fig:CUTEST_inequality}, we present results using Dolan-Mor\'e performance profiles \cite{dolan2002benchmarking}, evaluating both feasibility and stationarity errors with respect to the number of gradient evaluations and Barrier iterations. The profiles are constructed similarly to those in \cref{fig:CUTEST_equality}, as follows. For each cost metric, an algorithm is considered to have successfully solved a problem instance in a given seed run at iterate $x_{out}$, starting from initial iterate $x_{init}$, if it satisfies a prescribed tolerance $\epsilon_{tol} \in (0, 1)$. Specifically, the algorithm is deemed to have solved a problem with respect to feasibility if $\|c(x_{out})\|_{\infty} \leq \epsilon_{tol} \max\{1, \|c(x_{init})\|_{\infty}\}$. The algorithm is further considered to have solved the problem with respect to stationarity if it satisfies the feasibility criterion and $KKT(x_{out}) \leq \epsilon_{tol} \max\{1, KKT(x_{init})\}$, where the KKT residual is defined in \eqref{eq:KKT_residual_ineq}.

Overall, across all tolerance levels, the \texttt{RA}-based algorithms demonstrate good performance in the early stages, are robust and achieve high accuracy. While we observe that ``S-SQP" slightly outperforms the \texttt{RA}-based methods at the highest accuracy level with respect to stationarity, the \texttt{RA}-based algorithms remain efficient in reducing constraint violation.

\begin{figure}[H]
    \centering
    \includegraphics[width=\textwidth]{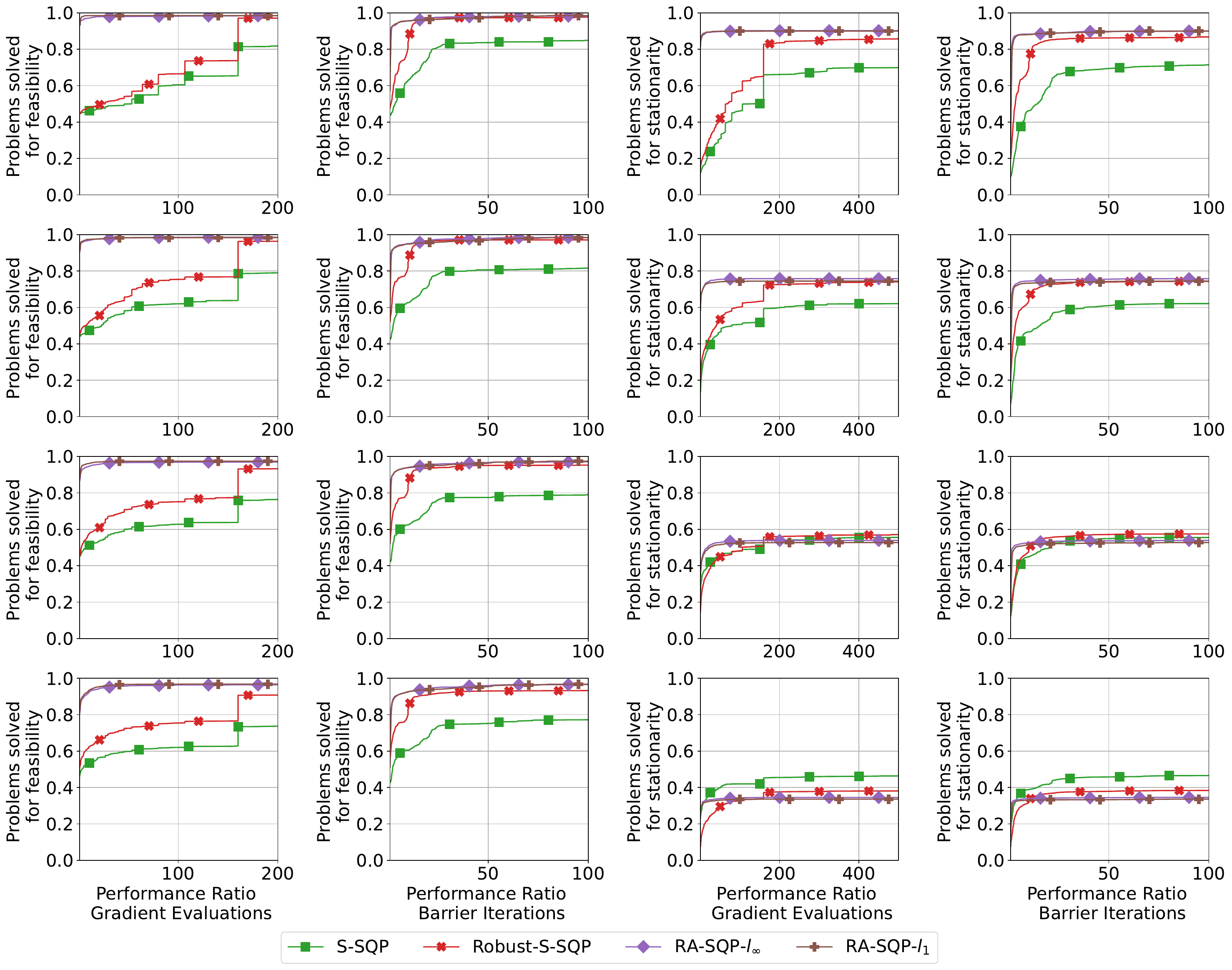}
    \caption{Performance profiles for feasibility and stationarity errors with respect to number of gradient evaluations and number of Barrier iterations for stochastic \texttt{SQP} ( ``S-SQP" \cite{curtis2023sequential}), robust stochastic \texttt{SQP} ( ``Robust-S-SQP" \cite{qiu2023sequential}) and our proposed algorithms ``RA-SQP-$l_\infty$" and ``RA-SQP-$l_1$" over the CUTEst problem set \cite{gratton2024s2mpj} to accuracy levels $\epsilon_{tol} \in \{10^{-1}, 10^{-2}, 10^{-3}, 10^{-4}\}$ decreasing going down the rows.}
    \label{fig:CUTEST_inequality}
\end{figure}

\subsubsection{Active Set Stability}
In this section, we empirically investigate the stability of the active set for problems with general nonlinear constraints across the outer iterations of \cref{alg:Inequality_Constrained_RA_SQP}. Since the constraints are deterministic while the objective function varies between outer iterations due to subsampling, it is desirable for the algorithm to identify and maintain a stable active set after a few outer iterations, particularly once the batch size in the subsampled problem becomes sufficiently large. To study this behavior, we select problems from the 
CUTEst problem set \cite{gratton2024s2mpj} that feature a large number of inequality constraints ($m_I$), where the number of active inequality constraints at the optimal solution ($m_I^*$) constitutes only a small subset.

To measure this, we first define the active set of inequality constraints at a point $x$ as $A(x) = \{i : [c_I(x)]_i \geq -10^{-6}\}$, where the threshold $10^{-6}$ corresponds to the feasibility tolerance used by GUROBI when solving the \texttt{SQP} subproblems. Let $A^*$ denote the active set at the optimal solution obtained using a deterministic solver. We employ two metrics to assess active set stability: 
``optimal set similarity", defined as the Jaccard similarity between the current active set and the optimal active set, $\tfrac{|A(x) \cap A^*|}{|A(x) \cup A^*|}$, and the constraint violation.
These metrics are evaluated over outer iterations for \cref{alg:Inequality_Constrained_RA_SQP} with the $l_1$ norm, and with respect to the number of gradient evaluations to facilitate comparison with ``S-SQP" and ``Robust-S-SQP" methods.
While we have used a large batch size of $1024$ for both ``S-SQP" and ``Robust-S-SQP" in our previous experiments to enable comparable accuracy, we also include results for both with a smaller batch size of $128$ in this analysis, labeled ``S-SQP (small)" and ``Robust-S-SQP (small)", respectively.

The results are summarized in Figure \ref{fig:Active_set_stability}. Achieving the optimal active set is only meaningful if the corresponding solution is feasible. For the BATCH problem, both batch sizes of ``S-SQP" fail to produce feasible solutions.
While ``RA-SQP-$l_1$", ``Robust-S-SQP" and ``Robust-S-SQP (small)" converge to the optimal active set, the feasibility error remains too large to claim successful recovery.
For the GPP problem, all algorithms eventually achieve feasibility.
However, both ``S-SQP" variants exhibit significant volatility in feasibility error and active set identification, especially the small batch size variant, and fail to stabilize on a consistent active set.
Similar instability is observed in the behavior of ``Robust-S-SQP", with ``Robust-S-SQP (small)" failing to identify the optimal active set. In contrast, ``RA-SQP-$l_1$", while slower, eventually recovers a feasible solution along with the optimal active set and maintains it consistently.
For the OET1 problem, all algorithms rapidly achieve and maintain feasibility. 
However, with the exception of ``RA-SQP-$l_1$", all other methods exhibit erratic changes in their active sets across iterations or fail to identify the optimal active set.
While this is an empirical discussion, the observations motivate further theoretical investigation into active set stability within the \texttt{RA}-based framework, potentially paving the way for an active set method tailored to inequality constrained problems.

\begin{figure}[H]
    \centering
    \includegraphics[width=\textwidth]{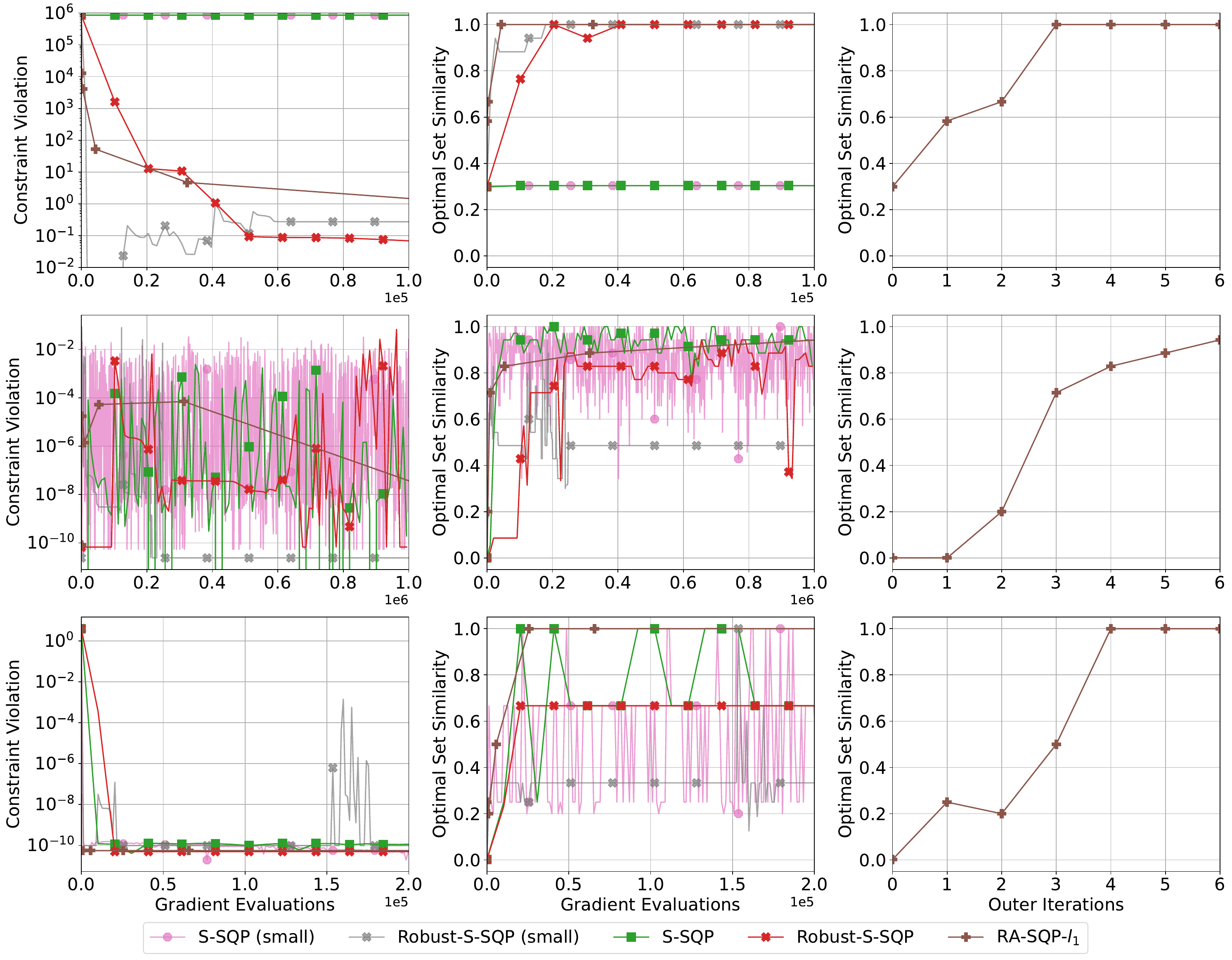}
    \caption{Constraint violation and optimal set similarity with respect to number of outer iterations and gradient evaluations for ``RobustS-SQP (small)", ``S-SQP (small)", ``S-SQP", ``Robust-S-SQP" and ``RA-SQP-$l_1$" across selected problems from the CUTEst problem set \cite{gratton2024s2mpj}. Each row corresponds to a different problem, listed from top to bottom as follows: BATCH($m_I=61$, $m_I^*=17$), GPP($m_I=198$, $m_I^*=35$) and OET1($m_I=1002$, $m_I^*=3$).  
    }
    \label{fig:Active_set_stability}
\end{figure}
\section{Final Remarks} \label{sec:Conclusion}

In this paper, we have proposed an algorithmic framework based on the Retrospective Approximation (\texttt{RA}) paradigm to solve stochastic optimization problems with deterministic constraints using deterministic optimization methods. The use of deterministic optimization methods has addressed challenges such as small step sizes and slow convergence that arise in recent stochastic approaches. They also have enabled the use of techniques from deterministic optimization, such as line search and quasi-Newton Hessian approximations, to improve empirical performance, techniques that are not readily applicable to the stochastic setting. We have presented a framework designed for problems with general nonlinear equality constraints that can leverage any deterministic solver for a stochastic problem. We have followed that with an instance of the framework that employs a deterministic line search Sequential Quadratic Programming (\texttt{SQP}) method. This instance is endowed with optimal complexity with respect to both gradient evaluations and \texttt{SQP} linear system solves for stochastic problems with equality constraints, and has allowed for the effective use of advanced techniques such as quasi-Newton Hessian approximations and inexact search direction computations, as demonstrated through our numerical experiments. For problems with general nonlinear equality and inequality constraints, we have proposed an \texttt{RA}-based algorithm that uses the deterministic robust-\texttt{SQP} method as the deterministic solver. The method ensures the consistency of the \texttt{SQP} subproblem in the presence of inequality constraints while identifying infeasible stationary points and is efficient as demonstrated in our numerical experiments.
We present empirical results over multiple problems and compare with algorithms from literature to showcase the competitive performance of the proposed framework.

\bibliographystyle{plain}
\bibliography{references.bib}

\appendix

\section{Technical Results}
We present some technical results that have been used throughout the paper.
\begin{lemma} \label{lem:series_for_convergence}
    Given a non-negative sequence $\{b_k\} \rightarrow 0$ and a constant $\gamma \in [0, 1)$,
    \begin{equation*}
        \lim_{K \rightarrow \infty} \sum_{k = 0}^K b_k \gamma^{K - k} = 0.
    \end{equation*}
\end{lemma}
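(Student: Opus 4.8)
The plan is to use the standard $\varepsilon$-splitting argument, exploiting two elementary facts: a convergent sequence is bounded, and the geometric weights $\gamma^{K-k}$ decay. First I would record that since $\{b_k\} \to 0$ it is bounded, say $0 \leq b_k \leq B$ for all $k$, and that since $\gamma \in [0,1)$ the geometric series satisfies $\sum_{j=0}^{\infty} \gamma^j = \tfrac{1}{1-\gamma} < \infty$. The whole difficulty is that the number of summands $K$ grows with the index we send to infinity, so the argument must separate a fixed ``head'' (finitely many early terms, with possibly large $b_k$ but heavily damped weights) from a ``tail'' (late terms, with small $b_k$ but lightly damped weights).

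Next I would fix $\varepsilon > 0$ and, using $b_k \to 0$, choose $M$ so that $b_k \leq \varepsilon(1-\gamma)$ for all $k > M$. I then split the sum at $k = M$ and bound each piece separately:
\begin{align*}
    \sum_{k=0}^{K} b_k \gamma^{K-k}
    = \sum_{k=0}^{M} b_k \gamma^{K-k} + \sum_{k=M+1}^{K} b_k \gamma^{K-k}.
\end{align*}
For the tail, I would use $b_k \leq \varepsilon(1-\gamma)$ and reindex with $j = K-k$, giving
\begin{align*}
    \sum_{k=M+1}^{K} b_k \gamma^{K-k}
    \leq \varepsilon(1-\gamma) \sum_{j=0}^{K-M-1} \gamma^j
    \leq \varepsilon(1-\gamma)\cdot \tfrac{1}{1-\gamma} = \varepsilon,
\end{align*}
a bound independent of $K$.

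For the head, I would observe that with $M$ fixed, every exponent satisfies $K-k \geq K-M$ for $k \leq M$, so $\gamma^{K-k} \leq \gamma^{K-M}$, and hence the head is at most $B(M+1)\gamma^{K-M}$. Since $M$ is now fixed, this is a finite constant times $\gamma^{K-M} \to 0$, so I would choose $K_0$ with $B(M+1)\gamma^{K-M} \leq \varepsilon$ for all $K \geq K_0$. Combining the two estimates yields $\sum_{k=0}^{K} b_k \gamma^{K-k} \leq 2\varepsilon$ for $K \geq K_0$; since $\varepsilon$ was arbitrary and every term is non-negative, the limit is $0$.

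I expect the only genuine subtlety to be the order of quantifiers: $M$ must be chosen depending on $\varepsilon$ alone and frozen \emph{before} letting $K \to \infty$, so that the head's bounded coefficients are annihilated by the uniformly small factor $\gamma^{K-M}$ while the tail stays controlled independently of $K$. Everything else is a routine geometric-series estimate, so no further machinery is needed.
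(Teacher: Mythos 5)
Your proposal is correct and follows essentially the same $\varepsilon$-splitting strategy as the paper's proof: split the sum at a fixed index, control the tail via the geometric series and the smallness of $b_k$, and let the geometric factor $\gamma^{K-M}$ annihilate the finitely many head terms. The only cosmetic difference is that you bound the head by $B(M+1)\gamma^{K-M}$ using a uniform bound $B$ on the whole sequence (which is in fact slightly more careful than the paper's use of $b_0$ there), so no further comment is needed.
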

\begin{proof}
    Consider a finite index $\Bar{K}$ and a corresponding value $\bar{b} \geq 0$ such that $a_k \leq \bar{b}$ for all $k \geq \bar{K}$, which exist as $\{b_k\}$ is a non-negative sequence that converges to zero. For $K > \bar{K}$, the sum can be bounded as
    \begin{align*}
        \sum_{k=0}^{K}b_k \gamma^{k - k} &= \sum_{k=0}^{\bar{K}}b_k \gamma^{K - k} + \sum_{k=\bar{K}+1}^{K}b_k \gamma^{K - k} \leq b_0 \sum_{k=0}^{\bar{K}}\gamma^{K - k} + \bar{b} \sum_{k=\bar{K}+1}^{K}\gamma^{K - k} \leq b_0  \frac{\gamma^{K- \bar{K}}}{1 - \gamma} + \frac{\bar{b} \gamma}{1 - \gamma}.
    \end{align*}
    Given $\epsilon > 0$, if one selects $\bar{K}$ large enough such that $\bar{b} \leq \frac{\epsilon(1-\gamma)}{2\gamma}$ and $K$ large enough such that $b_0  \frac{\gamma^{K- \bar{K}}}{1 - \gamma} \leq \frac{\epsilon}{2}$, then $\sum_{k=0}^{K}b_k \gamma^{K - k} \leq \epsilon$. As one can always select $K$ and $\bar{K}$ large enough to make the non-negative sum arbitrarily small, it converges to zero.
\end{proof}

\begin{lemma} \label{lem:linear_convergence_induction}
    Given a non-negative sequence $\{Z_k\}$ such that $Z_{k+1} \leq \rho_1 Z_k + b \rho_2^k$ where $\rho_1, \rho_2 \in [0, 1)$ and $ 0 \leq b < \infty$, the sequence $\{Z_k\} \rightarrow 0$ at a linear rate as,
    \begin{equation*}
        Z_{k} \leq \max\{\rho_1 + \nu, \rho_2\}^{k+1} \max\left\{Z_0, \frac{b}{\nu}\right\},
    \end{equation*}
    where $\nu > 0$ such that $\rho_1 + \nu < 1$.
\end{lemma}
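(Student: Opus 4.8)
The plan is to prove the estimate by a direct induction on $k$, rather than by unrolling the recursion into the convolution $\sum_{i=0}^{k-1}\rho_1^{k-1-i}\rho_2^{i}$; the induction is cleaner and sidesteps the one genuine difficulty, namely the resonant case $\rho_1=\rho_2$. Throughout I abbreviate $\mu=\max\{\rho_1+\nu,\rho_2\}$ and $M=\max\{Z_0,\,b/\nu\}$, and I would first record that $\mu<1$, since both $\rho_1+\nu<1$ and $\rho_2<1$, so a geometric bound with rate $\mu$ is exactly a linear convergence statement with prefactor $M$.

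Before the induction I would isolate the two inequalities that the definitions of $\mu$ and $M$ are engineered to supply: first, $\mu-\rho_1\ge\nu$, which is immediate from $\mu\ge\rho_1+\nu$; and second, $b\le\nu M$, which is immediate from $M\ge b/\nu$, together with the trivial $\rho_2\le\mu$. These are precisely the slacks consumed in the inductive step.

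The induction then runs as follows. The base case is $Z_0\le M$, which holds since $M\ge Z_0$ by definition. For the step, assume $Z_k\le\mu^{k}M$; substituting into the given recursion $Z_{k+1}\le\rho_1 Z_k+b\rho_2^{k}$ gives $Z_{k+1}\le\rho_1\mu^{k}M+b\rho_2^{k}$. I would bound the forcing term by $b\rho_2^{k}\le(\nu M)\mu^{k}=\nu\mu^{k}M$ using $\rho_2\le\mu$ and $b\le\nu M$, and the homogeneous term by $\rho_1\mu^{k}M\le(\mu-\nu)\mu^{k}M$ using $\mu-\rho_1\ge\nu$; adding the two yields $Z_{k+1}\le(\mu-\nu)\mu^{k}M+\nu\mu^{k}M=\mu^{k+1}M$, which advances the hypothesis and delivers the advertised geometric rate $\mu=\max\{\rho_1+\nu,\rho_2\}$ with prefactor $M=\max\{Z_0,\,b/\nu\}$.

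The main thing to get right — and the reason the parameter $\nu>0$ appears at all — is the decision to route both the homogeneous decay rate $\rho_1$ and the inhomogeneous rate $\rho_2$ through the single rate $\mu$. The strict gap $\mu-\rho_1\ge\nu$ leaves a residual budget $(\mu-\rho_1)\mu^{k}M\ge\nu\mu^{k}M$ at each step that is exactly large enough to swallow the forcing term $b\rho_2^{k}$, and it is this gap that removes the polynomial-in-$k$ factor a naive geometric-sum estimate would produce when $\rho_1$ and $\rho_2$ coincide. No compactness or problem-specific input is needed, so I expect the only care required is the bookkeeping of the inductive step and the observation that any $\nu\in(0,\,1-\rho_1)$ is an admissible choice.
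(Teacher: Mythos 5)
Your proof is correct and takes essentially the same route as the paper's: the identical induction, with the same two slack inequalities $b\le\nu M$ and $\rho_1+\nu\le\mu$ consumed in the inductive step (the paper just writes the step as $\rho_1\mu^k M + b\rho_2^k \le (\rho_1+\nu)\mu^k M \le \mu^{k+1}M$ rather than splitting off $\mu-\nu$). Note that both your argument and the paper's actually establish $Z_k\le\mu^{k}M$ — the base case is $Z_0\le M=\mu^0 M$ — so the exponent $k+1$ in the lemma's statement is a harmless off-by-one on the paper's side, not a defect of your proof.
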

\bproof
The proof follows by induction. For $Z_0$, the result trivially holds. Then, if the result holds for $Z_k$ for $k \geq 0$,
\begin{align*}
    Z_{k+1} \leq \rho_1 Z_k + b \rho_2^k &\leq \rho_1 \max\{\rho_1 + \nu, \rho_2\}^k \max\left\{Z_0, \frac{b}{\nu}\right\} + b \rho_2^k \\
    &\leq \max\{\rho_1 + \nu, \rho_2\}^k \max\left\{Z_0, \frac{b}{\nu}\right\} \left(\rho_1  + \nu\right) \\
    &\leq \max\{\rho_1 + \nu, \rho_2\}^{k+1} \max\left\{Z_0, \frac{b}{\nu}\right\},
\end{align*}
thus completing the proof.
\eproof

\section{Additional Proofs} \label{appendix:proofs}

In this section, we present certain proofs that have omitted from the paper for brevity.

\newtheorem*{customtheorem1}{\cref{th:EQ_outer_iter_complexity_step_norm}}
\begin{customtheorem1} 
    Suppose Assumptions \ref{ass:gradient_errors} and \ref{ass:SQP_eq_assumptions} hold and that the batch size sequence $\{|S_k|\}$ is chosen to satisfy \cref{cond:eq_norm_test_d} in \cref{alg:Equality_Constrained_RA_SQP} with termination criterion \eqref{eq:termination_criterion_d}.
    \begin{enumerate}
        \item For the finite-sum problem \eqref{eq:intro_deter_error_obj}: For all $k \geq 0$, if termination criterion parameters are chosen as $0 \leq \gamma_k \leq \gamma < \tfrac{1 - \eta_d}{1 + \eta_d}$, $\epsilon_k = \omega \|\nabla f(x_{k, 0}) - g_{S_k}(x_{k, 0})\| + \hat{\omega}\beta^k$ with $\omega, \hat{\omega} \geq 0$ and \cref{cond:eq_norm_test_d} parameters are chosen such that $a_1 = \tfrac{1}{1 - \eta_d}\left[ \gamma(1 + \eta_d) + \theta\left(\omega + (\gamma + \hat{\theta})\tfrac{1 + \eta_d}{\kappa_{HJ}}\right) \right] < 1$, then, the true search direction norm converges at a linear rate across outer iterations, i.e., 
        \begin{align*}
            \|d_{k, N_k}^{true}\|
            \leq \max \{a_1 + \nu, \beta\}^{k+1} \max\left\{\|d_{0, 0}^{true}\|, \tfrac{a_2}{\nu}\right\},
        \end{align*}
        where $a_2 = \tfrac{a}{1 - \eta_d}\left(\omega + (\gamma + \hat{\theta})\tfrac{1 + \eta_d}{\kappa_{HJ}}\right) + \tfrac{\hat{\omega}}{1 - \eta_d}$ and $\nu > 0$ such that $a_1 + \nu < 1$.
        \item For the expectation problem \eqref{eq:intro_stoch_error_obj}: For all $k \geq 0$, if \cref{ass:Variance_regularity} holds, the termination criterion parameters are chosen as $0 \leq \gamma_k \leq \tilde{\gamma} < \tfrac{1 - \eta_d}{1 + \eta_d}$, $\epsilon_k = \tilde{\omega}\sqrt{\tfrac{\Var(\nabla F(x_{k, 0}) | \Fcal_k)}{|S_k|}}$ where $\tilde{\omega} \geq 0$ and \cref{cond:eq_norm_test_d} parameters are chosen such that $\tilde{a}_1 = \tfrac{1}{1- \eta_d}\left[\tilde{\gamma} (1 + \eta_d) + \tilde{\theta} \left(\tilde{\omega} + \tfrac{1 + \eta_d}{\kappa_{HJ}}\left(\tfrac{\epsilon_G + \kappa_g}{\kappa_{\sigma}} + \tilde{\gamma}\right)\right) \right] < 1$, then, the expected true search direction norm converges at a linear rate across outer iterations, i.e., 
        \begin{align*}
            \Embb\left[\|d_{k, N_k}^{true}\| \right]
            \leq \max \{\tilde{a}_1 + \tilde{\nu}, \tilde{\beta}\}^{k+1} \max \left\{\|d_{0, 0}^{true}\|, \tfrac{\tilde{a}_2}{\tilde{\nu}}\right\},
        \end{align*}
        where $\tilde{a}_2 = \tfrac{\tilde{a}}{1 - \eta_d} \left[\tilde{\omega} + \tfrac{1 + \eta_d}{\kappa_{HJ}}\left(\tfrac{\epsilon_G + \kappa_g}{\kappa_{\sigma}} + \tilde{\gamma}\right) \right]$ and $\tilde{\nu} > 0$ such that $\tilde{a}_1 + \tilde{\nu} < 1$.
    \end{enumerate}
\end{customtheorem1}
\begin{proof}
    The proof follows a similar procedure to \cref{th:EQ_outer_iter_complexity}. For the finite-sum problem \eqref{eq:intro_deter_error_obj}, the result from \cref{lem:step_norm_error_bound} can be further refined as,
    \begin{align*}
        \|d_{k, N_k}^{true}\|
        &\leq \gamma \tfrac{1 + \eta_d}{1 - \eta_d} \|d_{k-1, N_{k-1}}^{true}\| + \tfrac{1}{1 - \eta_d}\left( \omega \|\nabla f(x_{k, 0}) - g_{S_k}(x_{k, 0})\| + \hat{\omega}\beta^k\right) \\
        &\quad + \kappa_{HJ}^{-1} \tfrac{1 + \eta_d}{1 - \eta_d}\|\nabla f(x_{k, N_k}) - g_{S_k}(x_{k, N_k})\| + \gamma \kappa_{HJ}^{-1} \tfrac{1 + \eta_d}{1 - \eta_d}\|\nabla f(x_{k, 0}) - g_{S_k}(x_{k, 0})\| \\
        &\leq \gamma \tfrac{1 + \eta_d}{1 - \eta_d} \|d_{k-1, N_{k-1}}^{true}\| + \tfrac{1}{1 - \eta_d}\left( \omega \left(\theta \|d_{k-1, N_{k-1}}^{true}\| + a \beta^{k}\right) + \hat{\omega}\beta^k\right) \\
        &\quad + \kappa_{HJ}^{-1}\hat{\theta} \tfrac{1 + \eta_d}{1 - \eta_d}\left(\theta \|d_{k-1, N_{k-1}}^{true}\| + a \beta^{k}\right) + \gamma \kappa_{HJ}^{-1} \tfrac{1 + \eta_d}{1 - \eta_d}\left(\theta \|d_{k-1, N_{k-1}}^{true}\| + a \beta^{k}\right) \\
        &=  a_1 \|d_{k-1, N_{k-1}}^{true}\| + a_2 \beta^{k},
\end{align*}
where the second inequality follows from \cref{cond:eq_norm_test_d} and the final equality follows from the defined constants. Using the above bound, applying \cref{lem:linear_convergence_induction} with $Z_k = \|d_{k, N_k}^{true}\|$, $\rho_1 = a_1$, $\rho_2 = \beta$ and $b = a_2$ completes the proof.

For the expectation problem \eqref{eq:intro_stoch_error_obj}, the expectation result from \eqref{eq:ineq_main_error_bound_expec} can be further refined as,
\begin{align*}
    \Embb[\|d_{k, N_k}^{true}\| | \Fcal_k]
    &\leq  \tilde{\gamma} \tfrac{1 + \eta_d}{1 - \eta_d} \|d_{k-1, N_{k-1}}^{true}\| + \tfrac{\tilde{\omega}}{1 - \eta_d} \sqrt{\tfrac{\Var(\nabla F(x_{k, 0}) | \Fcal_k)}{|S_k|}} \\
    &\quad + \kappa_{HJ}^{-1} \tfrac{1 + \eta_d}{1 - \eta_d} \Embb[\|\nabla f(x_{k, N_k}) - g_{S_k}(x_{k, N_k})\| | \Fcal_k] \\
    &\quad + \tilde{\gamma} \kappa_{HJ}^{-1} \tfrac{1 + \eta_d}{1 - \eta_d} \Embb[\|\nabla f(x_{k, 0}) - g_{S_k}(x_{k, 0})\| | \Fcal_k] \\
    &\leq  \tilde{\gamma} \tfrac{1 + \eta_d}{1 - \eta_d} \|d_{k-1, N_{k-1}}^{true}\| + \tfrac{\tilde{\omega}}{1 - \eta_d} \sqrt{\tfrac{\Var(\nabla F(x_{k, 0}) | \Fcal_k)}{|S_k|}} \\
    &\quad + \kappa_{HJ}^{-1} \kappa_G \tfrac{1 + \eta_d}{1 - \eta_d} \tfrac{(\epsilon_G + \kappa_g)}{\sqrt{|S_k|}} + \tilde{\gamma} \kappa_{HJ}^{-1} \tfrac{1 + \eta_d}{1 - \eta_d} \Embb[\|\nabla f(x_{k, 0}) - g_{S_k}(x_{k, 0})\| | \Fcal_k] \\
    &\leq  \tilde{\gamma} \tfrac{1 + \eta_d}{1 - \eta_d} \|d_{k-1, N_{k-1}}^{true}\| + \tfrac{\tilde{\omega}}{1 - \eta_d} \left(\tilde{\theta} \|d_{k-1, N_{k-1}}^{true}\| + \tilde{a}\tilde{\beta}^{k}\right) \\
    &\quad + \kappa_{HJ}^{-1} \tfrac{1 + \eta_d}{1 - \eta_d} \tfrac{(\epsilon_G + \kappa_g)}{\kappa_{\sigma}}\left(\tilde{\theta} \|d_{k-1, N_{k-1}}^{true}\| + \tilde{a}\tilde{\beta}^{k}\right) + \tilde{\gamma} \kappa_{HJ}^{-1} \tfrac{1 + \eta_d}{1 - \eta_d} \left(\tilde{\theta} \|d_{k-1, N_{k-1}}^{true}\| + \tilde{a}\tilde{\beta}^{k}\right) \\
    &= \tilde{a}_1 \|d_{k-1, N_{k-1}}^{true}\|  + \tilde{a}_2\tilde{\beta}^k,
\end{align*}
where the second inequality follows from \eqref{eq:biased_gradient_error_bound} and \cref{ass:Variance_regularity}, the third inequality follows by substituting $\kappa_G \leq \tfrac{\sqrt{\Var(\nabla F(x_{k, 0}) | \Fcal_k)}}{\kappa_{\sigma}}$ and \cref{cond:eq_norm_test_d} and the last equality follows from the defined constants. Taking the total expectation of the bound yields,
\begin{align*}
    \Embb\left[\|d_{k, N_k}^{true}\| \right]
    \leq \tilde{a}_1 \Embb\left[\|d_{k-1, N_{k-1}}^{true}\|\right]  + \tilde{a}_2\tilde{\beta}^k.
\end{align*}
Applying \cref{lem:linear_convergence_induction} with $Z_k = \Embb\left[\|d_{k, N_k}^{true}\| \right]$, $\rho_1 = \tilde{a}_1 $, $\rho_2 = \tilde{\beta}$ and $b = \tilde{a}_2$ completes the proof.
\end{proof}

\newtheorem*{customcorollary1}{\cref{cor:ineq_geometric_batch_increase_outer_iter_complexity}}
\begin{customcorollary1} 
    Suppose the conditions of \cref{th:outer_complexity_ineq} hold.
    \begin{enumerate}
        \item For the finite-sum problem \eqref{eq:intro_deter_error_obj}: If the batch size is selected as $|S_{k}| =\lceil (1 - \beta^k)|\Scal| \rceil$ with $\beta \in (0, 1)$ and termination criterion parameters are chosen as $0 \leq \{\gamma_k\} \leq \gamma < 1$, $\epsilon_k = \omega \left(1 - \tfrac{|S_k|}{|\Scal|}\right)$ with $\omega \geq 0$, then the true search direction norm converges to zero at a linear rate across outer iterations, as expressed in \cref{th:outer_complexity_ineq} with $a_1 = \gamma$ and $a_2 = \omega + 2\mu_H^{-1}(1 + \gamma)(\omega_1 \kappa_g + \omega_2)$.
        
        \item For the expectation problem \eqref{eq:intro_stoch_error_obj}: If \cref{ass:Variance_regularity} is satisfied, the sample set size is chosen as $|S_{k+1}| = \left\lceil\tfrac{|S_k|}{\tilde{\beta}^2}\right\rceil$ with $\tilde{\beta} \in (0, 1)$ and the termination criterion parameters are chosen such that $0 \leq \{\gamma_k\} \leq \tilde{\gamma} < 1$, $\epsilon_k = \tfrac{\tilde{\omega}}{\sqrt{|S_k|}}$ where $\tilde{\omega} \geq 0$, then the true search direction norm converges to zero in expectation at a linear rate across outer iterations, as expressed in \cref{th:outer_complexity_ineq} with $\tilde{a}_1 = \tilde{\gamma}$ and $\tilde{a}_2 = \tfrac{\tilde{\omega} + \kappa_G\mu_H^{-1}(\epsilon_G + \kappa_g) + \tilde{\gamma}\mu_H^{-1} (\tilde{\omega}_1 \kappa_g + \tilde{\omega}_2)} {\sqrt{|S_0|}}$, where $|S_0|$ is the initial batch size.
    \end{enumerate}
\end{customcorollary1}
\begin{proof}
    The proof follows from the same procedure as \cref{cor:Eq_geometric_batch_increase_outer_iter_complexity}.
    For the finite-sum problem \eqref{eq:intro_deter_error_obj}, substituting \eqref{eq:deter_sampled_gradient_error} and the defined parameters into error bound \eqref{eq:ineq_main_error_bound},
    \begin{align*}
        \|d_{k, N_k}^{true}\|
        &\leq  \gamma \|d_{k-1, N_{k-1}}^{true}\| + \omega \left(1 - \tfrac{|S_k|}{|\Scal|}\right)  \\
        &\quad + \mu_H^{-1}\left\|\nabla f(x_{k, N_k}) - g_{S_k}(x_{k, N_k})\right\| + \gamma\mu_H^{-1} \|\nabla f(x_{k, 0}) - g_{S_k}(x_{k, 0})\|\\
        &\leq  \gamma \|d_{k-1, N_{k-1}}^{true}\| + \omega \left(\tfrac{|\Scal| - (1 - \beta^k)|\Scal|}{|\Scal|}\right)  \\
        &\quad + 2\mu_H^{-1}\left(\tfrac{|\Scal| - (1 - \beta^k)|\Scal|}{|\Scal|}\right)\left(\omega_1 \|\nabla f(x_{k, N_k})\| + \omega_2 + \gamma (\omega_1 \|\nabla f(x_{k, 0})\| + \omega_2)\right)\\
        &\leq  \gamma \|d_{k-1, N_{k-1}}^{true}\| + (\omega + 2\mu_H^{-1}(1 + \gamma)(\omega_1 \kappa_g + \omega_2))\beta^k \\
        &=  \gamma \|d_{k-1, N_{k-1}}^{true}\| + a_2 \beta^k,
    \end{align*}
    where the third inequality follows from \cref{ass:SQP_ineq_assumptions} and the final equality from the defined constants. 
    Using the above bound, applying \cref{lem:linear_convergence_induction} with $Z_k = \|d_{k, N_k}^{true}\|$, $\rho_1 = \gamma$, $\rho_2 = \beta$ and $b = a_2$ completes the proof.
    
    For the expectation problem \eqref{eq:intro_stoch_error_obj}, the expectation error bound \eqref{eq:ineq_main_error_bound_expec} can be further refined as,
    \begin{align*}
        \Embb\left[\|d_{k, N_k}^{true}\| | \Fcal_k \right]
        &\leq \tilde{\gamma} \|d_{k-1, N_{k-1}}^{true}\| + \tfrac{\tilde{\omega}}{\sqrt{|S_k|}} +\mu_H^{-1} \Embb\left[\left\|\nabla f(x_{k, N_k}) - g_{S_k}(x_{k, N_k})\right\| | \Fcal_k \right] \\
        &\quad + \tilde{\gamma}\mu_H^{-1} \Embb\left[\|\nabla f(x_{k, 0}) - g_{S_k}(x_{k, 0})\| | \Fcal_k \right]\\
        &\leq \tilde{\gamma} \|d_{k-1, N_{k-1}}^{true}\| + \tfrac{\tilde{\omega}}{\sqrt{|S_k|}} + \kappa_G\mu_H^{-1}\tfrac{(\epsilon_G + \kappa_g)}{\sqrt{|S_k|}} + \tilde{\gamma}\mu_H^{-1} \tfrac{\tilde{\omega}_1 \|\nabla f(x_{k, 0})\| + \tilde{\omega}_2}{\sqrt{|S_k|}}\\
        &\leq \tilde{\gamma} \|d_{k-1, N_{k-1}}^{true}\| + \tfrac{\tilde{\beta}^k}{\sqrt{|S_0|}}\left[\tilde{\omega} + \kappa_G\mu_H^{-1}(\epsilon_G + \kappa_g) + \tilde{\gamma}\mu_H^{-1} (\tilde{\omega}_1 \kappa_g + \tilde{\omega}_2) \right] \\
        &= \tilde{\gamma} \|d_{k-1, N_{k-1}}^{true}\| + \tilde{a}_2 \tilde{\beta}^k,
    \end{align*}
    where the second inequality follows from \eqref{eq:biased_gradient_error_bound} with \cref{ass:Variance_regularity} and \eqref{eq:stoch_sampled_gradient_error}, the third inequality follows from the form of $S_k$ and \cref{ass:SQP_ineq_assumptions} and the equality follows from the defined constants. Taking the total expectation of the above bound yields,
    \begin{align*}
        \Embb\left[\|d_{k, N_k}^{true}\|\right]
        \leq \tilde{\gamma} \Embb\left[\|d_{k-1, N_{k-1}}^{true}\|\right] + \tilde{a}_2 \tilde{\beta}^k.
    \end{align*}
    Applying \cref{lem:linear_convergence_induction} with $Z_k = \Embb\left[\|d_{k, N_k}^{true}\|\right]$, $\rho_1 = \tilde{\gamma} $, $\rho_2 = \tilde{\beta}$ and $b = \tilde{a}_2$ completes the proof.
\end{proof}

\newtheorem*{customlemma1}{\cref{lem:ineq_large_enough_batch_sizes}}
\begin{customlemma1}
    Suppose Assumptions \ref{ass:gradient_errors}, \ref{ass:SQP_ineq_assumptions} and \ref{ass:MFCQ_ineq} hold in \cref{alg:Inequality_Constrained_RA_SQP}.
    \begin{enumerate}
        \item For the finite-sum problem \eqref{eq:intro_deter_error_obj}: For $k \geq 0$, \cref{cond:ineq_norm_test} is satisfied if
        \begin{equation*}
            |S_k| \geq |\Scal|\left(1 - \sqrt{\tfrac{\theta^2 \|d_{k, 0}^{true}\|^2 + a^2 \beta^{2k}}{4(\omega_1^2 \kappa_g^2 + \omega_2^2)}}\right) \quad \text{with} \quad \hat{\theta} = 1.
        \end{equation*}
        \item For the expectation problem \eqref{eq:intro_stoch_error_obj}:
        For $k \geq 0$, \cref{cond:ineq_norm_test} is satisfied if
        \begin{equation*}
            |S_k| \geq  \tfrac{\tilde{\omega}_1^2 \kappa_g^2 + \tilde{\omega}_2^2}{\tilde{\theta}^2 \|d_{k, 0}^{true}\|^2 + \tilde{a}^2 \tilde{\beta}^{2k}}.
        \end{equation*}
    \end{enumerate}
\end{customlemma1}
\begin{proof}
    Similar to the procedure employed in \cref{lem:eq_large_enough_batch_sizes}, for the finite-sum problem \eqref{eq:intro_deter_error_obj}, from \cref{ass:gradient_errors}
    \begin{align*}
        \|\nabla f(x_k, 0) - g_{S_k} (x_k, 0)\|^2 &\leq 4\left(1 - \tfrac{|S_k|}{|\Scal|}\right)^2 (\omega_1^2 \|\nabla f (x_{k, 0})\|^2 + \omega_2)
        \\
        &\leq 4 \left(\sqrt{\tfrac{\theta^2 \|d_{k, 0}^{true}\|^2 + a^2 \beta^{2k}}{4(\omega_1^2 \kappa_g^2 + \omega_2^2)}}\right)^2 (\omega_1^2 \kappa_g^2 + \omega_2)
    \end{align*}
    where the first inequality follows from \eqref{eq:deter_sampled_gradient_error} and second inequality from \cref{ass:SQP_ineq_assumptions} and the suggested lower bound on $|S_k|$, thereby satisfying \cref{cond:ineq_norm_test}. The gradient error condition on $x_{k, N_k}$ with $\hat{\theta} = 1$ can be verified in the same way. 
    
    For the expectation problem \eqref{eq:intro_stoch_error_obj},
    \begin{align*}
        \Embb[\|\nabla f(x_k, 0) - g_{S_k} (x_k, 0)\|^2 | \Fcal_k] &\leq \tfrac{\tilde{\omega}_1^2 \|\nabla f(x_{k, 0})\|^2 + \tilde{\omega}_2^2}{|S_k|}         \\   
        &\leq (\tilde{\omega}_1^2 \kappa_g^2 + \tilde{\omega}_2^2)\tfrac{\tilde{\theta}^2 \|d_{k, 0}^{true}\|^2 + \tilde{a}^2 \tilde{\beta}^{2k}}{\tilde{\omega}_1^2 \kappa_g^2 + \tilde{\omega}_2^2}
    \end{align*}
    where the first inequality follows from \eqref{eq:stoch_sampled_gradient_error} and second inequality from \cref{ass:SQP_ineq_assumptions} and the suggested lower bound on $|S_k|$, thereby satisfying \cref{cond:ineq_norm_test}, thus completing the proof.
\end{proof}

\end{document}